\numberwithin{equation}{section}
\newtheorem{sublem}{Sublemma}
\newtheorem{lemma}{Lemma}[section]
\newtheorem{prop}[lemma]{Proposition}
\newtheorem{thm}{Theorem}
\theoremstyle{definition}
\newtheorem{defi}[lemma]{Definition}
\theoremstyle{remark}
\newtheorem{rem}{Remark}
\newcommand{\T}{\mathbb{T}}
\newcommand{\vf}{\varphi}
\newcommand{\vp}{\varphi}
\newcommand{\ve}{\varepsilon}
\newcommand{\Z}{\mathbb{Z}}
\newcommand{\R}{\mathbb{R}}
\newcommand{\dist}{\text{dist}}
\newcommand{\inn}{\text{int}}
\newcommand{\tvp}{\widetilde{\varphi}}
\newcommand{\tpsi}{\widetilde{\psi}}
\title[On some generalizations of skew-shifts on $\mathbb{T}^2$]
{On some generalizations of skew-shifts on $\mathbb{T}^2$}
\author{Kristian Bjerkl\"ov}
\email{bjerklov@kth.se}
\address{Department of Mathematics, KTH Royal Institute of Technology, 100 44 Stockholm, Sweden}
\thanks{Research supported by the Swedish Research Council,  2012-3090.} 
\begin{document}
\begin{abstract}
In this paper we investigate maps of the two-torus $\T^2$ of the form $T(x,y)=(x+\omega,g(x)+f(y))$ for Diophantine $\omega\in\T$ and 
for a class of maps $f,g:\T\to\T$, where each $g$ is strictly monotone and of degree 2, and each $f$ is an orientation preserving 
circle homeomorphism. For our class of $f$ and $g$ we show that $T$ is minimal and has exactly two invariant and ergodic Borel probability measures.
Moreover, these measures are supported on two $T$-invariant graphs. One of the graphs is a Strange Nonchaotic Attractor whose basin of attraction
consists of (Lebesgue) almost all points in $\T^2$. Only a low regularity assumption (Lipschitz) is needed on the maps $f$ and $g$, and  
the results are robust with respect to Lipschitz-small perturbations of $f$ and $g$.

\bigskip
MSC2010: 37C40, 37C70, 37E30 
\end{abstract}
\maketitle
\begin{section}{Introduction}
Skew-shifts are classical examples in ergodic theory\footnote{They are also used, for example, as base dynamics in the theory of ergodic Schr\"odinger operators. 
See, e.g., \cite{BGS}.}.  These are maps of the two-torus $\T^2=\R^2/\Z^2$ of the form
$$
T_0(x,y)=(x+\omega,kx+y)
$$
where $k\in\mathbb{Z}\setminus \{0\}$ and $\omega\in\R\setminus \mathbb{Q}$. It is well-known that they are minimal and uniquely ergodic  
(see, e.g., \cite[Prop. 4.7.4]{BS}). A natural question is to ask what can happen with the dynamics of $T_0$ if one changes the 
fiber map $y\mapsto kx+y$ to, e.g.,  
$y\mapsto kx+f(y)$, where $f$ is a circle homeomorphism; 
this is the topic of the present paper. In particular we will be interested in the case $k=2$ which is the simplest case where one gets
"interacting critical sets", or "resonances"\footnote{This is due to the fact every circle $\T\times \{y\}$, $y\in\T$, is mapped to a closed curve which intersects
each circle $\T\times\{\eta\}$ twice.}. This case also naturally arises in certain models (see Example 2 below). 
Showing how such cases can be handled for certain classes of $f$, without using any parameter exclusion, 
and under low regularity assumptions, is one of the main purposes of this paper. 
The case $k=1$ is easier (but far from understood for general $f$) and becomes a special case of our analysis 
(for the class of maps $f$ under consideration; see Subsection \ref{assumptions}).
Other existing techniques (for example, \cite{B2, Y2}) could also have been adapted for analyzing the case $k=1$, 
since there is no need of any exclusion of parameters in this situation (as already noticed in \cite[Remark 2]{Y2}).

Skew-shifts are examples of so-called quasi-periodically driven circle maps (skew-product maps), i.e., maps on $\T^2$ of the form
$$
F(x,y)= (x+\omega,h(x,y))
$$
where each map $\T\ni y\mapsto h_x(y)=h(x,y)\in \T$ is a circle homeomorphism. This (large) class of maps (which all have zero topological entropy; see \cite{Bo}) 
have been intensely investigated in the literature
(see, e.g., \cite{AK,B1,B2,H2,Ja,KKHO,WZ,Y2}, and references therein), under various assumption on $h(x,y)$; 
but still a general theory seems very remote\footnote{The situation where the base dynamics $x\mapsto x+\omega$ is an ergodic translation on $\T^d$, $d\geq 2$,
still lacks of techniques for rigorous analysis in many interesting cases, including our setting. 
Furthermore, the case of continuous maps on $\T^2$, or $\T\times I$ ($I$ an interval), of the form
$F(x,y)= (x+\omega,q(x,y))$ 
where the maps $q(x,\cdot)$ are not necessarily invertible is much less understood; see, e.g., \cite{B3} for some results in this direction. }. 
One of the central problems that arises when analyzing such maps is the presence of "resonances". They appear in one way or another 
(and on infinitely many scales), for examples
in the form of so-called small divisors. There are essentially two ways to handle this problem: either to face it, or try to avoid it. A way to avoid
resonances is to consider parameter families of maps $F_a$, and use some kind of parameter exclusion to end up with a "large" set of parameter values 
$a$ for which one can describe the dynamics of $F_a$. This method has been used, e.g., in \cite{DS}. It is also the route taken in, for example, \cite{B1, Ja, Ja2, Y2} 
(based on ideas from one-dimensional dynamics, e.g., \cite{BC, CE, Jak}). The price one typically has to pay to be able to treat a fixed map $F$ 
(from some class of maps), or to be able to say something about the dynamics of $F_a$ for \emph{all} parameter values $a$, is to put more 
assumptions on the map $h(x,y)$ (and often work harder). Examples of the latter case are \cite{B2, H2, SS, WZ}. In this paper we shall put a topological condition
on $h(x,y)$ (as, e.g., in \cite{AK, Y1}). 

In the case of skew-shifts we have $h(x,y)=kx+y$. 
Note that the map $g(x)=kx$ has degree $k\neq 0$, and thus $T_0$ is not homotopic to the identity map. 
A big difference between maps $F$ that are homotopic to the identity, and those which are not, is that
the latter ones cannot have continuous invariant curves, i.e., there are no continuous functions 
$\varphi:\T\to\T$ such that $F(x,\varphi(x))=(x+\omega,\varphi(x+\omega))$ for all $x\in\T$ 
(see, e.g., \cite[Prop.4.2]{H2}). However, there can be (highly discontinuous) \emph{measurable} functions $\varphi:\T\to\T$
whose graphs are almost everywhere $F$-invariant. If such a "strange" invariant graph (which can, and do, also arise in maps $F$ homotopic to the identity)
attracts points, it is often called a Strange Nonchaotic Attractor (SNA). 
Such objects has shown to exist in various quasi-periodically driven models (see, e.g., \cite{B1,B2, H2, JTNO, Ja, Y2}, and references therein).  
There is also a huge literature focusing on SNAs based on numerical experiments.  

A way to measure contraction (note that there can only be contraction in the $y$-direction, since we have rotation in the $x$-direction) is via the 
(fibered) Lyapunov exponents. Given a point $(x,y)\in \T^2$ we use the notation
$$
(x_n,y_n)=F^n(x,y), \quad n\in\mathbb{Z}.
$$
Assuming, for example, that $h:\T^2\to\T$ is $C^1$, we can define the (fibered) Lyapunov exponents
$$
\lambda(x,y)=\limsup_{n\to\infty}\frac{1}{n}\log\left|\frac{\partial y_n}{\partial y}\right|
= \limsup_{n\to\infty}\frac{1}{n}\sum_{k=0}^{n-1}\log \left|\frac{\partial h}{\partial y}(x_k,y_k)\right|.
$$
For the map $T_0$ we trivially have $\lambda(x,y)=0$ for all $(x,y)\in\T^2$.
Furthermore, since the map $F$ is invertible, we can have $\lambda(x,y)>0$ for, at most, a set of zero Lebesgue measure of $(x,y)\in\T^2$. Thus, the question
is whether $\lambda(x,y)=0$ for a.e. $(x,y)\in\T^2$, or if $\lambda(x,y)<0$ for a positive measure set of $(x,y)\in\T^2$. 
Unfortunately this question is often notoriously hard to answer, but central for the description of the dynamics.

\subsubsection{Perturbations of skew-shifts}
We return to the skew-shift $T_0$. It has been noticed that the dynamics of this map seems to be sensitive to perturbations (in the fiber), 
but that the perturbed maps have a more robust dynamics. More precisely, in \cite{KKHO} the authors investigates maps of $\T^2$ of the form
$$
F(x,y)=(x+\omega,x+y+\eta P(x,y))
$$ 
where $P:\T^2\to\R$ is $C^1$, and the parameter $\eta$ ranges in an interval for which $F$ is invertible. 
An example considered in the numerical experiments in  \cite{KKHO} is
$P(x,y)=\sin(2\pi y)$. These experiments indicate that the map $F$  has an attractor with a negative (fibered) Lyapunov exponent for all (small) $\eta\neq 0$. 
Whether this is true or not seems to be an open question.
However, under such an assumption, it is rigorously shown in \cite{KKHO} that the map $F$ has a single attractor, and $F$ is topologically transitive. Thus,
assuming that the Lyapunov exponent is negative for an open set of parameters $\eta$ one gets robust strange nonchaotic attractors, meaning that the SNA survives 
after perturbation (in many other models it is expected that the SNA is very sensitive to perturbations, existing only on the boundary between order and chaos).
See Figure \ref{orbit} for a plot of an orbit of a similar kind of map.

In this paper we will show that such an situation, with robust SNAs, indeed can occur in models included in the wide class considered in \cite{KKHO}. See Theorem 1
below.  

In fact, robust families of SNAs also follow by considering the projective action of the non-uniformly hyperbolic $SL(2,\mathbb{R})$-cocycles 
(see next paragraph for more details on quasi-periodic cocycles) in \cite[Corollary 1]{Y2}, combined with \cite[Proposition 4.17]{H2}. 
See also Example 2 below for an extension of Young's result \cite[Corollary 1]{Y2}.

\begin{figure}
\includegraphics[width=7cm]{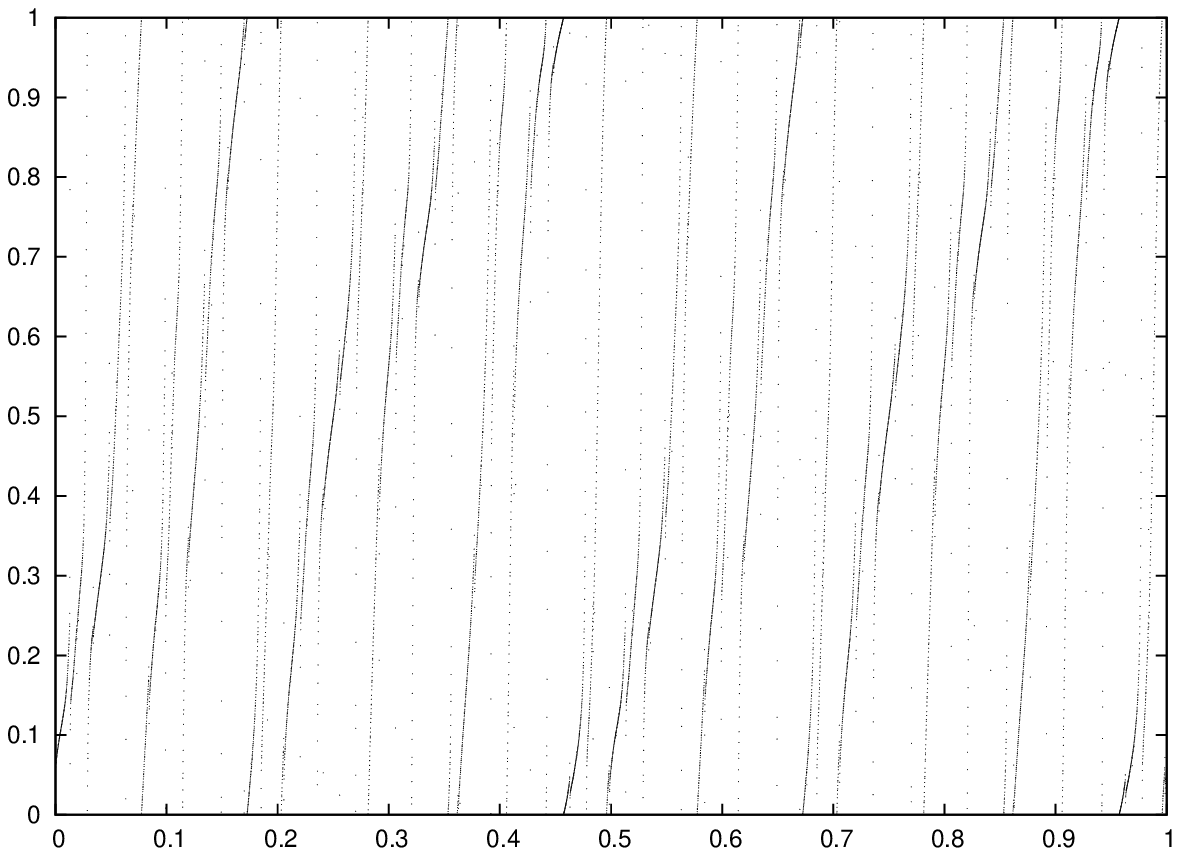}
\caption{Plot of (a piece) of the orbit of $(0,0)$ under the map $F(x,y)=\left(x+\frac{1}{\sqrt{2}},2x+y+0.1\sin2\pi y\right)$.}\label{orbit}
\end{figure}

\subsubsection{Cocycles}
A special class of maps $F$, and a well-studied one, arises in quasi-periodic $SL(2,\mathbb{R})$ cocycles. Consider maps $\Phi:\T\times \R^2\to\T\times \R^2$ 
$$
\Phi(x,v)=(x+\omega,M(x)v)
$$
where $M:\T\to SL(2,\R)$ is continuous. 
The cocycle $\Phi$ induces a projective action, denoted $\overline{F}$, on $\T\times \mathbb{P}^1$. If $v\in\R^2\setminus \{0\}$, 
we denote by $\overline{v}\in \mathbb{P}^1$ 
the projectivization of $v$. We introduce coordinates on $\mathbb{P}^1=\T$ by letting $\overline{\binom{\cos\pi y}{\sin \pi y}}$ correspond to $y\in \T$. 
Moreover, an invertible matrix $M$ induces a map $\overline{M}$ on $\mathbb{P}^1$ in the natural way.
Seen like this, the cocycle $\Phi$ induces a quasi-periodically driven circle map  $\overline{F}$.
For example, if $M(x)=R_{2\pi x}$, where 
\begin{equation}\label{Rtheta}
R_\theta=\left(\begin{matrix}\cos \theta & -\sin \theta \\ \sin \theta & \cos \theta\end{matrix}\right),
\end{equation}
then it is easy to check that $\overline{F}(x,y)=(x+\omega, 2x+y)$, i.e., we get $T_0$ with $k=2$.

To the cocycle $\Phi$ there is associated the (maximal) Lyapunov exponent
$$
L(M)=\lim_{n\to\infty}\frac{1}{n}\int_{\T}\log \|M^n(x)\|dx \geq 0
$$
where $M^n(x)=M(x+(n-1)\omega)\cdots M(x), n\geq 1$.
From Oseledets' theorem, it follows that if $L(M)>0$, then there exists a measurable splitting $W^+(x)\oplus W^-(x)=\R^2$ such that
$$
\lim_{n\to\infty}\frac{1}{n}\log|M^n(x)v|=L(M) \text{ for a.e. }x\in \T, \text{ all } v\notin W^-(x). 
$$
We note that if we let $v_n=M^n(x)v$ and  $w_n=M^n(x)w$, where $v,w\notin W^-(x)$ and $\overline{v}\neq \overline{w}$, then, since $M^n(x)\in SL(2,\R)$, we
have (by considering area)
$$
|v_n||w_n|\sin\theta_n=|v||w|\sin\theta
$$
where $\theta_n\in [0,\pi)$ is the angle between $v_n$ and $w_n$. For a.e. $x\in\T$ we then have
$$
\lim_{n\to\infty} \frac{1}{n}\log\sin\theta_n=-2L(M).
$$
Thus, we obtain the Lyapunov exponent $L(M)$ in the contraction rate (in the fiber) in iterates of $\overline{F}$. 

The by far most studied case is the one-parameter family 
$$
M(x)=M_E(x)=\left(\begin{matrix}0 & 1 \\ -1 & q(x)-E\end{matrix}\right), \quad E\in \R,
$$
where $q:\T\to\R$. This family of cocycles arises when investigating the one-frequency quasi-periodic Schr\"odinger equation 
$-(u_{n+1}+u_{n-1})+q(x+n\omega)u_n=Eu_n$. 
In this case the cocycles are homotopic to the identity. For real-analytic $q$ we are beginning to get a good understanding of many properties of 
the cocycles (see, for example, \cite{A} and references therein)  .
On the contrary, if $q$ is only assumed to have finite smoothness, then the results are much more sparse. If one uses parameter exclusion (in $E$) there are results in, e.g., \cite{B2}.
If we let $q(x)=K q_0(x)$ where $q_0:\T\to \R$ is $C^2$ and has exactly two
non-degenerate critical points ("cosine-like"), then it is possible to describe much of the dynamics of $\Phi_E$ (or $\overline{F}_E$) for all values of $E$, 
provided that the constant $K$ is large and $\omega$ is Diophantine 
\cite{B1, WZ}  (spectral properties of the associated Schr\"odinger operator, for this setting, is treated in \cite{FSW,S}). For example, one only gets SNA 
(for the associated projective map $\overline{F}$) for parameters $E$ in a Cantor set (of large measure) \cite{P,S,WZ2}; thus they are not robust.
However, for a fixed $q_0$ of finite smoothness with more than two monotonicity intervals it is not known what can happen; one would have to improve the techniques in the proofs
in \cite{B1, WZ}, since one in this case can have "multiple resonances" (although this is not "typical").   

Recently there has been an increased interest in quasi-periodic  
cocycles non-homotopic to the identity. In \cite{AK} it is shown that the (smooth) non-homotopic case (or, more precisely, the so-called monotonic case) 
is surprisingly well-behaved and much more robust than, 
for example, the Schr\"odinger
case. The results in the present paper is also an indication of this (the examples in \cite{Y1} are also related to this), that "monotonic" maps are more robust  
(for example, we can work with low regularity assumptions, even though we have resonances). In \cite{AK} it is also shown that the projective action of so-called 
pre-monotonic cocycles is always minimal. Only $C^{1+\ve}$ smoothness is needed for the argument (which is partly based on an argument from \cite{KKHO}).
An interesting problem is whether this holds under even lower regularity assumptions (see also \cite[Problem 3.1]{AK}). 

\subsection{Main object}
In this paper we shall consider maps  $T:\T^2\to\T^2$ of the form\footnote{We could have considered more general classes of maps, i.e., that 
we have the exact form $g(x)+f(y)$ is not important for the method we use; but for simplicity we have chosen to work with this class.}
\begin{equation}\label{MapT}
T(x,y)=(x+\omega,g(x)+f(y))
\end{equation}
where the functions $f,g:\T\to\T$ are assumed to be Lipschitz continuous.

We recall that a function $\varphi:\T\to\T$ is Lipschitz
continuous if there exists a constant $K$ such that $d(\vf(y),\vf(x))\leq Kd(x,y)$ for all $x,y\in \T$; here $d$ denotes the standard metric on $\T$.
The constant $K$ is called a Lipschitz constant for $\vf$. We say that $\vf$ has a Lipschitz constant $c$ on $I\subset \T$ if
$d(\vf(y),\vf(x))\leq c d(x,y)$ for all $x,y\in I$.
Recall that a Lipschitz continuous function is absolutely continuous, and thus differentiable a.e. on $\T$.

We also recall that if $\varphi:\T\to\T$ is a continuous function, then a 
lift of $\vf$ is a continuous function $\Phi:\R\to\R$ such that $\pi\circ\Phi(x)=f\circ \pi(x)$ for all $x\in \R$, where
$\pi:\mathbb{R}\to \T$ is the natural projection $\pi(x)=x \mod 1$. It is well-know that a lift of $\vf$ is unique up to 
the addition of an integer. Moreover, $\Phi(x+1)-\Phi(x)=k$, $k\in\Z$. The integer $k$ is called the degree of $\varphi$.

Note that for maps $T$ of the form (\ref{MapT}), the (fibered) Lyapunov exponents $\lambda(x,y)$ become
$$
\lambda(x,y)=\limsup_{n\to\infty}\frac{1}{n}\sum_{k=0}^{n-1}\log \left|f'(y_k)\right|.
$$

\subsection{Assumptions on $f,g$ and $\omega$}\label{assumptions}
Here we specify the exact assumptions on $f$ and $g$. We begin with the assumption on $g$:

\smallskip
\begin{itemize}
\item[$(A1)(\kappa)$]\emph{We assume that $g:\T\to\T$ has degree 2, and that a lift $G:\R\to\R$ of $g$ is bi-Lipschitz, and $G$ and $G^{-1}$ 
both have a Lipschitz constant $<\kappa$.}  
\end{itemize}

\smallskip

\begin{rem}An natural example is $g(x)=2x$. Our result (Theorem 1) also holds in the easier case when $g$ has degree $1$, for example if  $g(x)=x$. 
The proof is easier (we do not get any "resonant cases"; see Section 2.3, and Sections 5 and 6). 
However, one of the main purposes of this paper is to treat "resonances", so we do not stress on this case.

In our analysis it would in fact be possible to allow $g$ to have any degree $k\geq 1$ (and not adding any smoothness assumptions). 
However, the bookkeeping (and notation) in the proof of Theorem 1 below would become slightly more involved; but fundamentally no new problems would arise.   
Thus we have chosen to concentrate on the case $k=2$.
\end{rem}

In the assumptions on $f$ below, the number $\ve>0$ should be thought of being a (very) small number.
\begin{itemize}
\item[$(A2)(\ve,\rho)$]
\emph{Assume that $f:\T\to\T$ is an orientation preserving bi-Lipschitz homeomorphism, where $f$ and $f^{-1}$ both have a Lipschitz constant $\displaystyle <\ve^{-\rho}$. 
We further assume that there exist disjoint open intervals $A, B \subset \T$ such that
$$
|\T\setminus B|<\ve, |f(\T\setminus A)|<\ve 
$$
and $f$ has a Lipschitz constant $<\ve$ on $B$, and $f^{-1}$ has a Lipschitz constant $<\ve$ on $f(A)$.} 
\end{itemize}

\smallskip
That $f$ is bi-Lipschitz implies that $f$ and $f^{-1}$ both are differentiable for a.e. $y\in \T$.  We note that the two last conditions imply
$$
f'(y)<\ve \text{ for a.e } y\in B; \quad
f'(y)>\ve^{-1}  \text{ for a.e } y\in A.
$$
\begin{rem}
We have tried to make the assumptions on $f$ as clean as possible, avoiding the introduction of many parameters. It would be possible to have
other assumptions. However, we do really need that the assumptions are "perturbative", in the sense that the  $\ve$ is very small.
This will enable us, in the proof, to start the induction immediately. Without such a smallness assumption one would need to develop some
finer analysis on the base case, which should give estimates (similar to the ones we have put on $f$) for some iterate $T^k$. 
Such an approach would be very interesting. However, here we focus on one part of the main mechanisms. 
\end{rem}

\noindent {\bf Example 1.} An example of a map satisfying $(A2)(\ve,2)$ is
$$
f(y)=\begin{cases}2y/\ve, &0\leq y\leq a \\
\ve(y-1)/2+1, & a\leq y\leq 1
\end{cases}
$$
where $a=(2\ve-\ve^2)/(4-\ve^2)$. Here we can take $A=(0,a)$ and $B=(a,1)$. See Figure \ref{fig0}
\begin{figure}
\psfrag{x}{$y$}
\psfrag{y}{$z$}
\psfrag{f}{$z=f(y)$}
\psfrag{A}{$A$}
\psfrag{B}{$B$}
\psfrag{R}{$f(\T\setminus A)$}
\psfrag{1}{$1$}
\includegraphics[width=7cm]{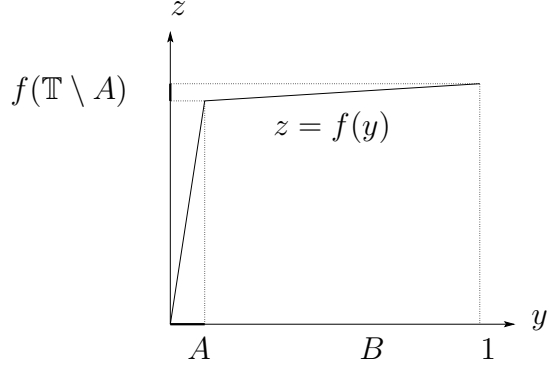}
\caption{The function $f$ in example 1.}\label{fig0}
\end{figure}

\begin{rem}Note that the assumptions on $f$ and $g$ are robust in the following sense: If $g$ satisfy $(A1)(\kappa)$ and $f$ satisfy $(A2)(\ve,\rho)$ (for some $\ve,\kappa,\rho>0$), and if 
$f_1,g_1:\T\to\T$ are Lipschitz continuous with a Lipschitz constant sufficiently small, then $g+g_1$ satisfy $(A1)(\kappa)$ 
and $f+f_1$ satisfy $(A2)(\ve,\rho)$. Note also that if $g$ satisfy $(A1)(\kappa)$, then, trivially, $g(x)+t$ also satisfy $(A1)(\kappa)$ for any
$t\in \T$.
\end{rem}

Next follows our assumption on the base frequency $\omega$. We assume that $\omega$ satisfies the Diophantine condition
\footnote{We could equally well have taken the lower bound of the form 
$\gamma/|q|^{1+\tau}$, any $\tau \geq 1$; but in order not to have many constants we have chosen to work with $q^2$. In fact one could even have a weaker condition,
like Brjuno, as, for example, in \cite{Y2}.}
\begin{equation}\label{DC}
\inf_{p\in\mathbb{Z}}|q\omega-p|>\frac{\gamma}{q^2} \quad \text{ for all } q\in\mathbb{Z}\setminus \{0\},  
\end{equation}
for some constant $\gamma>0$. Notice that this condition is satisfied for (Lebesgue) a.e. $\omega$ .

\subsection{Statement of results} We are now ready to state our results for maps $T$ defined as in (\ref{MapT}):

\begin{thm} Assume that $\omega$ satisfies the Diophantine condition (\ref{DC}) for some $\gamma>0$. Given any numbers $\kappa,\rho>1$ there is  
an $\ve_0=\ve_0(\gamma,\kappa,\rho)>0$ such that for all $0<\ve\leq \ve_0$ the following holds for the map $T(x,y)=(x+\omega,g(x)+f(y))$ whenever
 $g$ satisfies $(A1)(\kappa)$ and $f$ satisfies $(A2)(\ve,\rho)$: 
\begin{enumerate}
\item The map $T$ is minimal; 
\item The Lyapunov exponent $\lambda(x,y)<(\log\ve)/2$ for a.e. $(x,y)\in \T^2$; 
\item $T$ has exactly two invariant and ergodic 
Borel probability measures $\mu^s,\mu^u$.
Moreover,  $\mu^s,\mu^u$ are the push-forward of the Lebesgue measure on $\T$ by two maps $x\mapsto (x,u(x))$ and $x\mapsto (x,s(x))$, respectively, where
$u,s:\T\to\T$ are distinct measurable functions whose graphs are a.e. $T$-invariant.  
\item For a.e. $x\in \T$ and all $y\neq s(x)$ we have $d(y_n,u(x_n))<C(x,y)\ve^{n/2}, ~n\geq 0$.
\end{enumerate}

\end{thm}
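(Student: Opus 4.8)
\emph{Overall plan.} I would base everything on a multiscale (KAM-type) induction over the continued-fraction scales $q_k$ of $\om$, whose purpose is to control the composed fibre maps $\phi^x_n:=f_{x_{n-1}}\circ\cdots\circ f_{x_0}$ (with $f_z(y)=g(z)+f(y)$) over windows of length $\sim q_k$. The Diophantine condition (\ref{DC}) gives $\|q_k\om\|>\gamma/q_k^2$ and $q_{k+1}<q_k^2/\gamma$, so the scales grow at a controlled rate, and the smallness of $\ve=\ve(\gamma,\kappa,\rho)$ is what lets the induction start at scale $0$ directly from $(A1)$ and $(A2)$. The inductive data at scale $k$ would be a measurable graph $u_k:X_k\to\T$, with $X_k\subset\T$ and $\sum_k|\T\setminus X_k|<\infty$, such that for every $x\in X_k$ the map $\phi^x_{q_k}$ sends all of $\T$ except a short arc $J_k(x)$ into a short arc around $u_k(x_{q_k})$, with contraction of order $\ve^{cq_k}$, and such that $u_{k+1}$ is $\ve^{cq_k}$-close to $u_k$ on $X_{k+1}\subset X_k$. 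The inductive step concatenates $\sim q_{k+1}/q_k$ windows of length $q_k$, tracks how the bad arc propagates, and discards from $X_k$ the base points whose length-$q_{k+1}$ orbit is too resonant at scale $k$.

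\emph{The contraction mechanism and the resonant case.} By $(A2)$, $f$ is $\ve$-Lipschitz on $B$ with $|\T\setminus B|<\ve$, so $f$ squeezes the long arc $B$ onto the short arc $f(B)$; hence $f_z$ fails to be a strong contraction on most of $\T$ only when $g(z)+f(B)\not\subset B$, i.e. for $z$ in an $O(\ve)$-neighbourhood of the two critical levels of $g$ (this is where $\deg g=2$ enters). Away from resonant scales, (\ref{DC}) guarantees that the orbit meets these zones only boundedly often and with large gaps, and the inductive step is routine. The real difficulty is at the resonant scales, where the two transported critical zones approach each other — and the graph $u_k$ — at essentially the same time; this is the genuine degree-$2$ phenomenon of "interacting critical sets". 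Here I would use both halves of $(A2)$: since $f^{-1}$ is $\ve$-Lipschitz on $f(A)$ (equivalently $f$ stretches $A$ onto all but an $\ve$-fraction of $\T$), a fibre briefly expelled from $B$ is re-captured after a bounded number of steps, and the large Diophantine gap between consecutive critical passages at that scale then leaves room for the accumulated $\ve$-contraction to beat the finitely many $\ve^{-\rho}$-expansions, so the net contraction of order $\ve^{cq_k}$ survives. Carrying out this geometric bookkeeping with no exclusion of parameters is the technical heart of the argument and the step I expect to be the main obstacle.

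\emph{Extracting (2) and (4).} With the induction complete, the $u_k$ converge Lebesgue-a.e.\ (uniformly off the summable bad set) to a measurable $u$ with a.e.\ $T$-invariant graph, and the estimates show that for a.e.\ $x$ the map $\phi^x_n$ is asymptotically an $\ve^{n/2}$-contraction of $\T$ minus a single point, which is the limit of the bad arcs $J_k(x)$ and which I call $s(x)$; this gives (4). A symmetric analysis of $T^{-1}(x,y)=(x-\om,f^{-1}(y-g(x-\om)))$, whose fibre maps contract on the long arc $f(A)$, shows $\mathrm{graph}(s)$ is a.e.\ $T$-invariant and repelling, and moreover the fibre Lyapunov exponent along $u$ is $<\tfrac12\log\ve<0$ while along $s$ it is the negative of the corresponding exponent for $T^{-1}$ (for which $\mathrm{graph}(s)$ is the attractor), hence $>0$; in particular $\mu^u\neq\mu^s$, so $u\neq s$ a.e. For (2): a by-product of the induction is that for a.e.\ $x$ the orbit of $(x,u(x))$ lies in $B$ all but a fraction $\delta(\ve)\to0$ of the time, so (using $|f'|<\ve$ on $B$ and $|f'|<\ve^{-\rho}$ everywhere) $\limsup\frac1n\sum_{k<n}\log|f'(y_k)|\le(1-(1+\rho)\delta(\ve))\log\ve<\tfrac12\log\ve$ once $\ve$ is small, and (4) transfers this to a.e.\ $(x,y)$, whose orbit shadows that of $(x,u(x))$.

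\emph{Extracting (3) and (1).} Let $\mu^u,\mu^s$ be the push-forwards of Lebesgue under $x\mapsto(x,u(x))$ and $x\mapsto(x,s(x))$; each is $T$-invariant and ergodic because $R_\om$ is uniquely ergodic. If $\mu$ is any $T$-invariant ergodic measure, its $x$-marginal is Lebesgue, and disintegrating $\mu=\int\mu_x\,dx$ and pushing forward by $\phi^x_n$, statement (4) forces $\mu=a\mu^s+(1-a)\mu^u$; ergodicity gives $a\in\{0,1\}$, so $\mu^u,\mu^s$ are the only ergodic invariant measures, proving (3). For (1), let $K\neq\emptyset$ be closed and $T$-invariant; it meets every fibre since its projection is a closed $R_\om$-invariant subset of $\T$. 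If over a positive-measure set of $x$ the fibre of $K$ contains a point $\neq s(x)$, then along its orbit the empirical measures converge to $\mu^u$ (Lebesgue-a.e.\ point of $\mathrm{graph}(u)$ being $\mu^u$-generic, and $d(y_n,u(x_n))\to0$), so $K\supseteq\mathrm{supp}\,\mu^u$; otherwise the fibre of $K$ over $x$ is $\{s(x)\}$ for a.e.\ $x$, so $\mu^s(K)=1$ and $K\supseteq\mathrm{supp}\,\mu^s$. Since $\mathrm{supp}\,\mu^u=\mathrm{supp}\,\mu^s=\T^2$ — which one checks from the stretching of $f$ on $A$ together with $\deg g=2$ excluding a proper invariant band — it follows that $K=\T^2$, i.e.\ $T$ is minimal.
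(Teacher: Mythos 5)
Your overall scaffold matches the paper's: a multiscale induction over Diophantine return times, with the degree-$2$ topology of $g$ producing paired critical zones whose possible near-coincidences (the resonant scales) must be handled inside the induction rather than excluded, and with $u$ and $s$ emerging as the attracting and repelling measurable sections (forward vs.\ backward Oseledets directions). Your route to item (3) via disintegration and shadowing is different from the paper's, which invokes a Herman--Johnson type dichotomy (Proposition~\ref{M_P}) to classify all ergodic invariant measures and identify the minimal sets with their supports; both routes are sound, and yours is perhaps more self-contained, while the paper's cleanly separates the abstract measure-classification from the dynamical estimates.

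There is, however, a genuine gap in your treatment of minimality. You reduce (1) to the claim $\mathrm{supp}\,\mu^u=\T^2$, and you justify this by ``the stretching of $f$ on $A$ together with $\deg g=2$ excluding a proper invariant band.'' That argument only rules out the support being a continuous annular region (equivalently, a continuous invariant graph), but the support of $\mu^u$ is a priori an arbitrary closed invariant set and could, for instance, be a Cantor-like subset of $\T^2$ that projects onto $\T$. Non-homotopy to the identity gives you no continuous invariant circle; it does not give full support. In the paper this is precisely where the extra density estimate $(C)_n$ in the inductive lemma is doing the work: the inductive scheme propagates not only the contraction bounds you record (the analogue of your ``$\phi^x_{q_k}$ contracts $\T\setminus J_k(x)$ to a short arc'') but also a quantitative statement that the image under $T^{M_n+K_n}$ of any $\delta$-dense subset of the critical window $J_n-M_n\om$ is $(\ve^{-\rho K_n}\delta + 2\ve^{(M_n+K_n)/2})$-dense in $A'$. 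It is this estimate, combined with a compactness argument extracting a limit fibre $\{x^*\}\times A'$ inside the orbit closure and a further spreading step using the contraction bound, that yields $\overline{\{(x_k,y_k)\}_{k\geq 0}}=\T^2$ for the relevant base points, hence $\mathrm{supp}\,\mu^u=\T^2$. Your inductive data (graphs $u_k$ and per-scale contraction rates) does not obviously carry this density information, and you should add it explicitly; without it, item (1) does not follow from items (2)--(4).

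A smaller remark: you frame the scales as continued-fraction denominators $q_k$; the paper instead chooses scales $K_n,M_n$ inductively (growing like iterated exponentials in $\ve^{-1}$) using the Diophantine condition only through elementary return-time lemmas, and then patches in a small adjustment of $M_{n+1},K_{n+1}$ (Lemma~\ref{M_L3}) to keep the next critical window away from all earlier ones. This is a bookkeeping choice rather than a substantive difference, but it matters for the resonant step: the paper merges the two resonant critical intervals into one interval $J_n$ and runs the shadowing argument twice (through $J_n$ and through $J_n+\nu_n\om$), which is exactly the place where the degree-$2$ topology and the $\ve$-Lipschitz bound on $f^{-1}|_{f(A)}$ are used together, as you anticipate. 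That part of your sketch is on target, though as you yourself flag, it is the part that needs the most care to make rigorous.
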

\begin{rem}a) Condition (4) tells us that most points $(x,y)$ are attracted, under forward iteration by $T$, to the graph of the measurable function $\xi\mapsto u(\xi)$.

b) We note that if $f$ is also $C^1$, then, since we by Birkhoff's ergodic theorem have
$$
\lim_{n\to\infty}\frac{1}{n}\sum_{k=0}^{n-1}\log |f'(y_k))|=\int_{\T^2}\log|f'|d\mu^u \text{ for $\mu^u$-a.e. } (x,y)\in\T^2,
$$ 
it follows from (3) and (4) that 
$$
\lambda(x,y)=\lim_{n\to\infty}\frac{1}{n}\sum_{k=0}^{n-1}\log |f'(y_k)|=\int_\T\log|f'(u(x))|dx \text{ for a.e. } (x,y)\in \T^2.
$$ 

\end{rem}

\noindent {\bf Example 2.} We can apply the above theorem to a class of $SL(2,\mathbb{R})$-cocycles over Diophantine rotation. 
Consider the cocycle
$$
\Phi(x,v)=\left(x+\omega, R_{2\pi\vf(x)}Dv\right)
$$
where $\omega$ satisfies (\ref{DC}) for some $\gamma>0$, $D$ is the diagonal matrix
$$
D=\left(\begin{matrix} K & 0 \\ 0 & 1/K\end{matrix}\right), \quad K>1,
$$
$R_\theta$ is the rotation matrix defined in (\ref{Rtheta}), and $\varphi:\T\to\T$ is any bi-Lipschitz map. Let $L(K)$ denote the (maximal) Lyapunov exponent 
of the cocycle (recall the discussion in Subsection 1.0.2 above).
 
It is easy to check that the projectivization of $\Phi$ is 
$$\overline{F}(x,y)=(x+\omega, 2\varphi(x)+f(y))$$ where $f(y)=\arctan(\tan(\pi y)/K^2)/\pi$ (mod 1). Note that $g(x):=2\varphi(x)$ has degree 2, and let
$\kappa>1$ be a Lipschitz constant for $G$ and $G^{-1}$, where $G$ is a lift of $g$. Thus $g$ satisfy condition $(A1)(\kappa)$.
Furthermore, given a small $\ve>0$ it is easy to verify that $f(y)$ satisfies $(A2)(\ve,2)$ (i.e., $\rho=2$) provided that the constant $K$ is sufficiently large.
Hence, we can apply Theorem 1 (with the given $\gamma, \kappa$ and $\rho=2$) and conclude that the projective action $\overline{F}$ is minimal
and that the (fibered) Lyapunov exponent $\lambda(x,y)<\text{const.} \log(1/K)$, and thus $L(K)>\text{const.} \log K$, for all sufficiently large $K$ 
(depending on $\gamma$ and $\kappa$).

\subsection{Outline of the paper} The rest of the paper is organized as follows: In Section 2 we introduce notations and define some important sets related to the map $T$.
We also derive basic properties of iterations of $T$. In the last part of Section 2 we derive a few elementary topological properties
which we shall use to control the geometry and topology of iterations of curves. Thus, they are central for our analysis. 
In Section 3 we begin by briefly discussing properties of Lipschitz continuous
functions. We also derive formulae for the derivative of the iterates, and use them to obtain quantitative estimates under suitable assumptions on the iterates. 
Theorem 1 is proved in Section 4, assuming that the main proposition, Proposition \ref{pt_prop}, holds. The latter proposition gives a very detailed description
of the dynamics of $T$ (under the given assumptions). In the following two sections, Section 5 and 6, Proposition \ref{pt_prop} 
is proved via an inductive scheme. These two sections contain the key part of the analysis. In Section 5 we verify the base case of the construction.
The general approach for analyzing the map $T$ is similar to the 
ones we have used in \cite{B1,B2} (see also \cite{Ja, Ja2} where the methods in \cite{B2} are adapted to larger classes of circle maps). 
But in the present paper we exploit
the fact that we have maps with different topological properties (as in, for example, \cite{AK}).  

The (inductive) analysis consists of two main parts: an arithmetic one, and a geometric/topologic one. 
The arithmetic part is the more standard one, and is very similar to the one we used in \cite{B1}. The structure of several of the estimates that arise 
in this part are rather general and naturally arise in the analysis of non-uniformly hyperbolic maps.    
However, in the present case, compared with, for example, \cite{B1,B2}, the topology is very different. This allows us to handle the "resonances" 
without using parameter exclusion (as is needed in \cite{B2,Ja, Y2}) and without "losing derivatives" (which one does, for example, in \cite{B1}). 
In the non-resonant case, similar techniques were developed by Young in \cite{Y2}. 
All these techniques, in turn, are based on
ideas from one-dimensional dynamics, in particular \cite{BC, CE}. 

In the final section, Section 8, we have collected "abstract" results, most of them of a computational 
nature, which are used in different parts of our construction.

\end{section}

\begin{section}{Preliminaries}
In this section we introduce some of the notations which we will be using. We also derive elementary properties of
the map $T$ (defined in (\ref{MapT})), assuming that $g$ and $f$ satisfy (A1) and (A2), respectively. 
In the last subsection we will derive some elementary topological facts which will be frequently used to control the geometry. 

From now on we assume that $\omega$ is fixed, satisfying (\ref{DC}) for some $\gamma>0$. Furthermore, we assume that 
$\kappa,\rho>1$ are fixed, and we fix $g$ satisfying condition $(A1)(\kappa)$. We also assume that 
$f$ satisfy condition $(A2)(\ve,\rho)$, where $0<\ve\ll 1$, and the smallness of $\ve$ is only allowed to depend on $\kappa, \rho$ and $\gamma$.

\subsection{Some notations}

\begin{itemize}

\item By $d$ we denote the standard metric on $\T$.

\item If $X,Y\subset \T$ are sets, we define the distance $\dist(X,Y)=\inf_{x\in X, y\in Y}d(x,y)$.

\item If $I\subset \T$, we denote the interior of $I$ by $\inn(I)$; by $|I|$ we denote the (Haar) measure of the set $I$ (provided that
$I$ is measurable).

\item If $I=[x-\delta,x+\delta]\subset \T$ is an interval, we denote by $3I$ the interval $3I=[x-3\delta,x+3\delta]$. Thus,
the two intervals have the same center, and $|3I|=3|I|$ (if $I$ is sufficiently small). 

\item We let $\pi:\mathbb{R}\to \T$ be the natural projection $\pi(x)=x \mod 1$.

\item If $y,\eta\in \T$, the  (positively oriented) interval (or arc) $[y,\eta]$ is defined in the natural way: 
Let $\widetilde{y},\widetilde{\eta}\in [0,1)$ be the unique points
such that $\pi(\widetilde{y})=y$ and $\pi(\widetilde{\eta})=\eta$. If $\widetilde{y}\leq \widetilde{\eta}$, then $[y,\eta]=\pi([\widetilde{y},\widetilde{\eta}])$;
if $\widetilde{y} > \widetilde{\eta}$, then $[y,\eta]=\pi([\widetilde{y},1+\widetilde{\eta}])$.

\item If $(x,y)\in\T^2$, then we define the projections $\pi_1,\pi_2$ by
$$
\pi_1(x,y)=x,\quad \pi_2(x,y)=y.
$$


\item We say that a function $f:I\to \T$ ($I\subset \T$  an interval) is (strictly) increasing if the lifts of $f$ are (strictly) increasing.  

\item If $I\subset \T$ is any set and $x\in \T$, we let
$N(x;I)$ denote the smallest integer $k \geq 0$ such that $x+k\omega\in I$. If no such integer exists, we let
$N(x;I)=\infty$. Note that $N(x;I)=0$ if $x\in I$. Thus, $N$ is the first entry time of $x$ to $I$.

\item If $r>0$ is a real number, we denote by $[r]$ the integer part of $r$. Thus $r-1<[r]\leq r$.


\end{itemize}

\subsection{Definition of the sets $A', A'', R$ and the point $\beta$} 
We now define some sets which play a central r\^ole in the analysis.
Let  $$R=f(\T\setminus A).$$ Since $A$ is open, we have that $R$ is a closed interval. Moreover, 
by assumption (A2) we have $|R|<\ve$.

By assumption (A2) we have $A\subset \T\setminus B$, and thus $|A|<\ve$.
To have some space around the interval $A$, we let $A'$ be the closed set 
\begin{equation}\label{ap}
A'=\{y\in\T: \dist(y,A)\leq \ve\}.
\end{equation}
Since $|A|<\ve$ we have $|A'|< 3\ve$. Moreover, since we clearly have $|A'|>2\ve$, and since $A\subset A'$, 
it follows that
\begin{equation}\label{P_eq1}
\overline{\T\setminus A'}\subset B.
\end{equation}
At a few occasions we will also need an intermediate set, $A''$, which we define by
\begin{equation}\label{app}
A''=\{y\in\T: \dist(y,A)\leq \ve/2\}.
\end{equation}
By definition we thus have $A\subset A''\subset A'$.

We will also use a "reference point" $\beta\in \T\setminus A$ close to $A$. Writing $A=(a_1,a_2)$, we let
\begin{equation}\label{beta}
\beta=a_2+\ve/4\in A''.
\end{equation}
From this choice we immediately get the following (see Figure \ref{fig1}, and recall the definition of the oriented intervals $[y,\eta]$ above):
\begin{lemma}\label{P_betalemma}
If $y\in \T\setminus A''$ then $[\beta,y]\subset \T\setminus A$.
\end{lemma}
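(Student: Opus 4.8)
The plan is to unwind the definitions of $A$, $A''$, and $\beta$ and verify the claim by a direct case analysis on where the point $y$ sits relative to the intervals $A = (a_1,a_2)$ and $A'' = \{y : \dist(y,A)\leq \ve/2\}$. Recall that $A$ is an open interval, so $A''$ is the closed interval obtained by extending $A$ by $\ve/2$ on each side: writing things in a lift, $A'' = [a_1 - \ve/2,\ a_2 + \ve/2]$ (modulo $1$). The reference point is $\beta = a_2 + \ve/4$, which lies strictly between $a_2$ (the right endpoint of $A$) and $a_2 + \ve/2$ (the right endpoint of $A''$); in particular $\beta \in A'' \setminus A$, consistent with the statement $\beta \in A''$ already recorded in \eqref{beta}.

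First I would fix a lift and parametrize a neighborhood of $A''$ inside $\R$, so that $A$ corresponds to the open interval $(a_1, a_2)$ and $A''$ to $[a_1 - \ve/2, a_2 + \ve/2]$, with $\beta$ corresponding to $a_2 + \ve/4$. Here one uses that $\ve$ is small (so that $|A''| < 3\ve \ll 1$ and no wrap-around pathology occurs); this is exactly the kind of smallness already assumed throughout. Now take $y \in \T \setminus A''$. Lifting $y$ to the representative $\tilde y$ lying in the arc immediately to the "right" of $A''$ — i.e. choosing $\tilde y \in (a_2 + \ve/2,\ a_1 - \ve/2 + 1)$ — the positively oriented arc $[\beta, y]$ corresponds to the interval $[a_2 + \ve/4,\ \tilde y] \subset \R$. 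Since $\tilde y > a_2 + \ve/2 > a_2$ and the left endpoint $a_2 + \ve/4 > a_2$, the whole interval $[a_2 + \ve/4, \tilde y]$ starts strictly to the right of $a_2$ and ends before $a_1 + 1$ (the next copy of the left endpoint of $A$). Hence $[\beta, y]$ is disjoint from the set $(a_1, a_2) \cup (a_1 + 1, a_2 + 1) \cup \cdots$, i.e. disjoint from $A$ viewed in $\T$. This gives $[\beta, y] \subset \T \setminus A$.

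The only subtlety — and the place to be slightly careful rather than an actual obstacle — is the bookkeeping of orientation and the choice of lift: one must check that the positively oriented arc from $\beta$ to $y$, as defined in the notation list via the canonical representatives in $[0,1)$, really is the arc that goes "through $\beta + t$ for increasing $t$" and sweeps past the right side of $A''$ rather than re-entering $A$ from the left. This is handled by noting that $\beta$ sits at distance $\ve/4 < \ve/2$ to the right of $A$ while $y$ sits at distance $\geq \ve/2$ from $A$ on either side; traversing $\T$ positively starting from $\beta$, the first point of $A''$ one could meet is its right endpoint $a_2 + \ve/2$, which lies beyond $\beta$, and one reaches $y \notin A''$ strictly before completing the loop back to $a_1$ (the left endpoint of $A$). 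So the arc never touches $A$. A glance at Figure \ref{fig1} makes the geometry transparent, and formally the claim reduces to the elementary inequality chain $a_2 < \beta = a_2 + \ve/4 < a_2 + \ve/2 \leq \text{(position of } y\text{)} < a_1 + 1$, valid for $\ve$ small. This completes the proof.
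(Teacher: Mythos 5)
Your proof is correct, and it is exactly the unwinding-of-definitions verification that the paper leaves implicit (the lemma is stated with no proof block, only the remark "we immediately get" and a pointer to Figure~\ref{fig1}). Your lift bookkeeping — choosing the representative $\tilde y \in (a_2+\ve/2,\, a_1-\ve/2+1)$ so that the positively oriented arc $[\beta,y]$ becomes the interval $[a_2+\ve/4,\,\tilde y]\subset (a_2,\,a_1+1)$, which avoids every integer translate of $(a_1,a_2)$ — is the right way to make the picture rigorous, and your discussion of why the arc leaves $A''$ through its right endpoint and stops at $y$ before wrapping back to $a_1$ addresses the only point where one could go wrong.

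One microscopic remark: since $A''$ is closed and $y\in\T\setminus A''$, the inequality in your final chain is actually strict, $a_2+\ve/2 < \tilde y$; but as you only need to stay clear of the open interval $A$, the weak inequality you wrote already suffices for the conclusion.
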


\begin{figure}
\psfrag{a}{$A$}
\psfrag{b}{$A'$}
\psfrag{c}{$A''$}
\psfrag{d}{$\beta$}
\psfrag{e}{$y$}
\includegraphics[width=6cm]{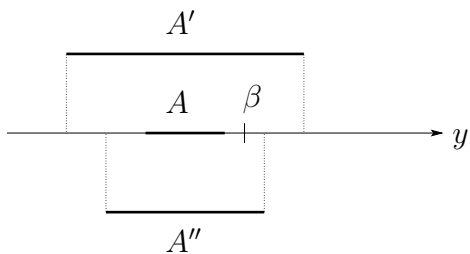}
\caption{The sets $A,A'$ and $A''$, and the point $\beta$}\label{fig1}
\end{figure}

\subsection{Definition of the strip $S$ and the set $I_0$} We recall that the map $T$ is given by
$$T(x,y)=(x+\omega,g(x)+f(y)).$$ By using the definition of the (small) interval $R$ above, we see that if
$(x,y)\in\T\times (\T\setminus A)$, then $T(x,y)\in \{x+\omega\}\times (R+g(x))$. Thus, as long as the iterates 
$T^k(x,y)\in \T\times (\T\setminus A)$ (for $k\geq 0$), we know that they must be in the strip $S$ defined as
\begin{equation}\label{S}
S=\bigcup_{x\in\T}\{x\}\times (R+g(x-\omega))=\bigcup_{x\in\T}\{x+\omega\}\times (R+g(x)).
\end{equation}
\begin{figure}
\psfrag{a}{$\T\times A'$}
\psfrag{b}{$S$}
\psfrag{c}{$I_0^1+\omega$}
\psfrag{d}{$I_0^2+\omega$}
\psfrag{e}{$x$}
\psfrag{f}{$y$}
\includegraphics[width=6cm]{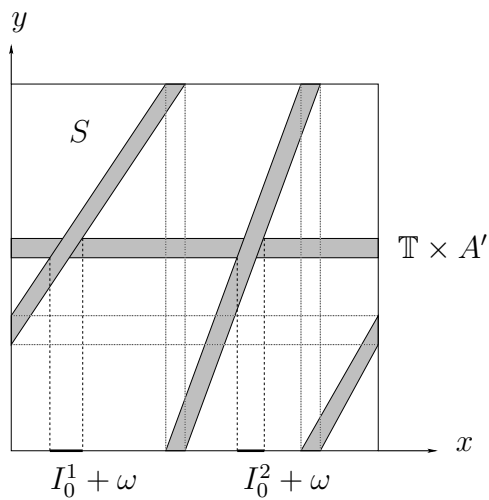}
\caption{The strips $S$ and $\T\times A'$, and the two intervals $I_0^1+\omega$ and $I_0^2+\omega$.}\label{fig2}
\end{figure}
If the iterates stayed in this strip for ever (which they cannot), we would easily have a good control on the dynamics. However, sooner
or later the iterates will enter the strip $\T\times A$; and on $A$ the map $f$ is strongly expanding (and $|f(A)|$ is close to 1 ), and thus uncertainty
is added. Indeed, since $g(\T)=\T$ we see that there must be a connection between the strip $S$ and the strip $\T\times A$ (see Figure \ref{fig2}). 
From the assumption that $g$ has degree 2, there are in fact two such connections (see also Lemma \ref{P_LC} below). 
This property, that there are two connections, and not only one, can cause resonance problems 
(on infinitely small scales) which complicates the analysis.

Since the location of the intersection between the two strips are central for the analysis, we 
define the set $I_0\subset \T$ so that $I_0+\omega$ is the projection onto the 
$x$-coordinate of the intersection between $S$ and the strip $\T\times A'$ (we use the "thickened" interval $A'$ instead of $A$ to have some more space):
\begin{equation}\label{I0}
I_0=\{x\in\T: (R+g(x))\cap A'\neq \emptyset\}=\pi_1(S\cap(\T\times A'))-\omega.
\end{equation}

From the above definitions, together with the assumptions on $f$ and $g$, we get the following results:
\begin{lemma}\label{P_LA}
(1) If $(x,y)\in S$ and $x\notin I_0+\omega$, then $y\in \T\setminus A'$.
(2) If $(x,y)\in S$ and $x\notin \inn(I_0+\omega)$, then $y\in  \overline{\T\setminus A'}\subset B$.
\end{lemma}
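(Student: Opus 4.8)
The plan is to unwind the definitions of $S$ and $I_0$ and use the elementary fact that $R+g(x)$ is a translate of the short interval $R$, so it meets $A'$ iff its $x$-coordinate lies in $I_0$. First recall that by $(\ref{S})$ a point $(x,y)\in S$ means precisely that $y\in R+g(x-\omega)$, i.e. the fiber over $x$ consists of the arc $R+g(x-\omega)$. On the other hand, by $(\ref{I0})$ we have $I_0+\omega=\pi_1\big(S\cap(\T\times A')\big)$, so $x\in I_0+\omega$ exactly when the arc $R+g(x-\omega)$ intersects $A'$. Hence, if $x\notin I_0+\omega$, then $(R+g(x-\omega))\cap A'=\emptyset$, and since $(x,y)\in S$ forces $y\in R+g(x-\omega)$, we conclude $y\notin A'$, i.e. $y\in\T\setminus A'$. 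This proves part (1).

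For part (2) I would argue similarly but keep track of the closure. If $x\notin\inn(I_0+\omega)$, I want to show the arc $R+g(x-\omega)$ is disjoint from $A'$ as well; the only danger is that $x$ lies on the boundary of $I_0+\omega$. Here one uses that $R$ is a \emph{closed} interval (noted right after the definition of $R$) and that $g$, being bi-Lipschitz, is continuous, so $I_0$ is closed and $x\in I_0+\omega$ would give $x\in\inn(I_0+\omega)$ unless $x\in\partial(I_0+\omega)$. A clean way around this: observe that $x\notin\inn(I_0+\omega)$ means there are points arbitrarily close to $x$ not in $I_0+\omega$; by part (1) applied along $S$ (whose fibers vary continuously, again by continuity of $g$), the corresponding fiber points avoid $A'$, and passing to the limit the fiber over $x$ avoids $\inn(A')$. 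Since $A'$ is closed with $|A'|>2\ve>|A|$, its complement's closure $\overline{\T\setminus A'}$ is contained in $B$ by $(\ref{P_eq1})$. Therefore $y\in\overline{\T\setminus A'}\subset B$, which is the assertion.

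The only genuinely delicate point is the boundary case in part (2) — making sure a point on $\partial(I_0+\omega)$ still has its $S$-fiber landing in $\overline{\T\setminus A'}$ rather than touching the interior of $A'$. This is handled purely by the closedness of $R$ and the continuity of $g$: the map $x\mapsto R+g(x-\omega)$ is continuous in the Hausdorff metric on closed arcs, so the set $\{x:(R+g(x-\omega))\cap A'\ne\emptyset\}=I_0+\omega$ is closed, while the set where the arc meets $\inn(A')$ is open and contained in $\inn(I_0+\omega)$; hence $x\notin\inn(I_0+\omega)$ implies $(R+g(x-\omega))\cap\inn(A')=\emptyset$, and combined with $y\in R+g(x-\omega)$ we get $y\in\T\setminus\inn(A')=\overline{\T\setminus A'}\subset B$. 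Everything else is immediate from the definitions, so this should be short.
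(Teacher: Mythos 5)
Your proof is correct and follows the same route the paper takes, namely unwinding the definitions of $S$ and $I_0$ and invoking $(\ref{P_eq1})$; the paper's proof is a one-liner, so you are just supplying the details, including the (genuinely mildly delicate) topological handling of the boundary case in part (2). Note only that your intermediate remark in part (2), ``I want to show the arc is disjoint from $A'$,'' is too strong at boundary points; the clean argument you then give, that the set of $x$ whose fiber meets $\inn(A')$ is open and contained in $\inn(I_0+\omega)$, is the right formulation and closes the gap.
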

\begin{proof} This follows directly from the definition of $S$ and (\ref{P_eq1}).
\end{proof}

Moreover, the set $I_0$ is small (see Figure \ref{fig2}):
\begin{lemma}\label{P_LC}
The set $I_0$ consists of two disjoint intervals, $I_0^1, I_0^2$, each of a length $\sim \ve$.
\end{lemma}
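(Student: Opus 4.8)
The plan is to analyze the set $I_0=\{x\in\T: (R+g(x))\cap A'\neq\emptyset\}$ directly by unwinding the condition on a suitable lift. First I would fix a lift $G:\R\to\R$ of $g$ (bi-Lipschitz with constants $<\kappa$ for $G$ and $G^{-1}$ by $(A1)(\kappa)$), and lifts of the intervals $R$ and $A'$. Since $|R|<\ve$ and $2\ve<|A'|<3\ve$, the set $\{z\in\T:(R+z)\cap A'\neq\emptyset\}$ is a single interval $J$ of length $|R|+|A'|\in(2\ve,4\ve)$, i.e. $|J|\sim\ve$; write $J=(c_1,c_2)$ on a lift. Then $I_0=g^{-1}(J)$, so the structure of $I_0$ is governed by how the degree-$2$ map $g$ pulls back a short interval.

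Next I would use degree $2$ together with strict monotonicity of $G$. On the real line, $G:[0,1)\to[G(0),G(0)+2)$ is a strictly increasing bi-Lipschitz bijection onto an interval of length $2$. The preimage $G^{-1}$ of a short interval $\widetilde J$ of length $|J|\sim\ve$ lying inside $[G(0),G(0)+2)$: if $\widetilde J$ together with its translate $\widetilde J+1$ (the two copies relevant modulo the period) — more precisely, since $G(x+1)=G(x)+2$, the equation $G(x)\in J\pmod 1$ is equivalent to $G(x)\in \widetilde J + \Z$, i.e. $G(x)\in\widetilde J\cup(\widetilde J+1)\cup\cdots$ intersected with the range; within one fundamental domain $[0,1)$ of $x$ this produces exactly the two preimages $G^{-1}(\widetilde J\cap[G(0),G(0)+2))$ and $G^{-1}((\widetilde J+1)\cap[G(0),G(0)+2))$. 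Since $G$ is a strictly monotone homeomorphism, each is a single interval; since $\widetilde J$ has length $\sim\ve\ll 1$ it (and $\widetilde J+1$) lies well inside the length-$2$ range for $\ve$ small, so both preimages are genuinely nonempty intervals, call them $I_0^1$ and $I_0^2$. Bi-Lipschitz-ness of $G^{-1}$ with constant $<\kappa$ gives $|I_0^i|<\kappa|J|\lesssim\ve$, and bi-Lipschitz-ness of $G$ gives $|I_0^i|>|J|/\kappa\gtrsim\ve$; hence each has length $\sim\ve$ (with implied constants depending only on $\kappa$).

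Finally I would check disjointness: $I_0^1$ and $I_0^2$ project to the two different lifts $\widetilde J$ and $\widetilde J+1$, which are disjoint in $\R$ (their centers differ by $1$ while each has length $\sim\ve$), and $G$ is injective on $[0,1)$ up to the integer shift, so the two preimage intervals are disjoint subintervals of $\T$ as long as $4\kappa\ve<1$, which holds for $\ve$ small depending on $\kappa$. The main (and only slightly delicate) obstacle is the careful bookkeeping with lifts to make the count "exactly two" rigorous — ensuring no wrap-around identifies the two components and that neither component degenerates or splits further — but this is routine once the degree-$2$ covering picture is set up, and the smallness of $\ve$ relative to $\kappa$ handles all the borderline cases.
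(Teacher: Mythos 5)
Your proposal is correct and follows essentially the same route as the paper: pass to a lift $G$ with $G(x+1)=G(x)+2$, observe that $I_0$ is the $g$-preimage of a single short interval of length $|R|+|A'|\sim\ve$ (you write this interval out explicitly as $J=A'-R$, which the paper leaves implicit), use the degree-$2$ covering structure to get exactly two preimage components, and use the bi-Lipschitz bounds from $(A1)(\kappa)$ to control the lengths from above and below.
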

\begin{proof} For completeness we include some details. By assumption we have $|R|<\ve$, and we have $2\ve<|A'|<3\ve$.
Let $G(x)$ be a lift of $g(x)$. Since $g$ has degree 2 we have $G(x+1)=G(x)+2$. From the fact that the equation $g(x)=y$ 
has exactly two solutions $x_1,x_2\in \T$ for each $y\in \T$, and since $A'$ is a single interval and $|A'|\ll 1$, it is clear that $I_0$ consist of two intervals.
Moreover, from the assumptions on $g$ (condition $(A1)(\kappa)$) we have $\frac{1}{\kappa}(\xi-x)<G(\xi)-G(x)<\kappa(\xi-x)$ for all $x<\xi$. Thus, if 
$G(\xi)-G(x)=|A'|+|R|$ we have $(|A'|+|R|)/\kappa<\xi-x<\kappa(|A'|+|R|)$. This gives the bound on the length of each of the two intervals in $I_0$.
\end{proof}

The last result is an elementary fact which will be used at certain places later in the paper.
\begin{lemma}\label{P_LB}
If $I\subset \T$ is an interval such that $(I_0^1+\omega)\subset I$, and $I\cap (I_0^2+\omega)=\emptyset$, 
and if $\vf:I\to\T$ is continuous and strictly increasing, and such that $(x,\vf(x))\in S$ for all $x\in I$, 
then $\vf^{-1}(A')$ is a single interval, and $\vf(\vf^{-1}(A'))=A'$.
\end{lemma}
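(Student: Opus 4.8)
The plan is to exploit the strip structure $S$ together with the monotonicity of $\vf$. First I would record the shape of the "slice" of $S$ over a point $x$: by definition $S=\bigcup_{x\in\T}\{x+\omega\}\times(R+g(x))$, so the vertical slice of $S$ at $x+\omega$ is exactly the translate $R+g(x)$ of the small interval $R=f(\T\setminus A)$, which has length $<\ve$. Hence the hypothesis $(x,\vf(x))\in S$ for all $x\in I$ means $\vf(x)\in R+g(x-\omega)$ for every $x\in I$, i.e. the graph of $\vf$ lies entirely inside the (narrow, "tilted") strip $S$. In particular $\vf(I)$ meets $A'$ only where the strip does, and by Lemma \ref{P_LA}, if $(x,\vf(x))\in S$ and $x\notin I_0+\omega$ then $\vf(x)\in\T\setminus A'$; so $\vf^{-1}(A')\subset I\cap(I_0+\omega)$. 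Since $I\cap(I_0^2+\omega)=\emptyset$ and $I_0=I_0^1\cup I_0^2$ with $I_0^1,I_0^2$ disjoint by Lemma \ref{P_LC}, we get $\vf^{-1}(A')\subset I_0^1+\omega\subset I$, so the whole preimage sits in the interior of $I$ and is an interval-sized piece.

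Next I would show $\vf^{-1}(A')$ is a single (possibly empty, but see below) interval. Because $\vf$ is continuous and strictly increasing on the interval $I$, it is a homeomorphism onto its image $\vf(I)$, which is itself an interval, and the preimage under a strictly monotone continuous map of the connected set $A'\cap\vf(I)$ is connected; so $\vf^{-1}(A')=\vf^{-1}(A'\cap\vf(I))$ is a subinterval of $I$ — the only thing to check is that $A'\cap\vf(I)$ is itself connected, which holds since $A'$ is a single interval (from \eqref{ap}) and $\vf(I)$ is an interval. Call this subinterval $J=[c,d]\subset I_0^1+\omega$.

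Finally I would prove $\vf(J)=A'$, i.e. that $\vf$ actually sweeps across \emph{all} of $A'$ on $I$, not just part of it. The idea is a connectedness/endpoint argument on the strip: as $x$ runs over the interval $I_0^1+\omega$, the slice $R+g(x-\omega)$ of $S$ slides monotonically (the center $g(x-\omega)$ moves monotonically since a lift $G$ of $g$ is strictly increasing by $(A1)(\kappa)$) across the interval $A'$, entering on one side and leaving on the other — this is precisely how $I_0^1$ was defined in \eqref{I0} as (a component of) $\{x:(R+g(x))\cap A'\ne\emptyset\}$, and $|A'|>2\ve>|R|$ ensures the slice is strictly narrower than $A'$ so it must fully traverse $A'$. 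Since the graph of $\vf$ stays in this strip, $\vf(c)$ and $\vf(d)$ must be the two endpoints of $A'$ (if, say, $\vf(c)$ were in the interior of $A'$, then slightly to the left of $c$ the strip $S$ would still intersect $A'$ while $\vf$ has already left $A'$, forcing the graph out of $S$ — contradiction), and then by continuity and strict monotonicity $\vf([c,d])=[\vf(c),\vf(d)]=A'$.

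The main obstacle I expect is this last step: making rigorous the claim that the graph of $\vf$, being trapped in the thin strip $S$, is forced to cross $A'$ completely rather than bouncing off its boundary. The clean way to do it is to argue by the intermediate value theorem applied to $x\mapsto \vf(x)$ versus the two boundary curves $x\mapsto \min(R+g(x-\omega))$ and $x\mapsto \max(R+g(x-\omega))$ of $S$, using that on $I_0^1+\omega$ the lower boundary of $S$ starts below $A'$ and the upper boundary ends above $A'$ (again by the definition \eqref{I0} of $I_0$ and $|R|<\ve<|A'|$, together with strict monotonicity of $G$), so $\vf$ must agree with the endpoints of $A'$ at the appropriate places. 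Everything else is a routine consequence of "strictly monotone continuous map = homeomorphism onto an interval image" plus the geometry already encoded in Lemmas \ref{P_LA} and \ref{P_LC}.
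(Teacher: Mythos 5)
Your proposal follows essentially the same route as the paper's short proof: use the strip structure and Lemma \ref{P_LA} (equivalently, the definition of $I_0$) to pin $\vf^{-1}(A')$ inside $I_0^1+\omega$, use strict monotonicity to get a single interval, and use an intermediate-value/traversal argument over $I_0^1+\omega$ to get $\vf(\vf^{-1}(A'))=A'$; the paper just states the traversal conclusion ``$\vf(I_0^1+\omega)\supset A'$'' first and then reads off the rest. One technical slip worth flagging: the claim that ``$\vf$ continuous and strictly increasing on $I\subset\T$ is a homeomorphism onto its image, hence preimages of connected sets are connected'' is false in general on the circle (a strictly increasing circle map with graph in $S$ can wrap around, precisely because $g$ has degree $2$); your argument is saved because you first localized $\vf^{-1}(A')$ to $I_0^1+\omega$, where the strip condition forces $\vf(I_0^1+\omega)$ to have length $\lesssim(\kappa+1)\ve\ll1$ and thus no wrapping, so the single-interval conclusion should be drawn on $I_0^1+\omega$ rather than on all of $I$.
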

\begin{proof}Since $(I_0^1+\omega)\subset I$, and $(x,\vf(x))\in S$ for all $x\in I$, it follows from the definition of $I_0$ 
that $\vf(I_0^1+\omega)\supset A'$. Since $\vf$ is strictly increasing it thus follows that $\vf^{-1}(A')\cap (I_0^1+\omega)$ is a single interval.
By the definition of $I_0$ we must have $\vf^{-1}(A')\subset I_0+\omega$. By assumption we have $I\cap (I_0^2+\omega)=\emptyset$; thus
$\vf^{-1}(A')\subset I_0^1+\omega$. See Figure \ref{fig2}.
\end{proof}

\subsection{Basic properties of the map $T$}\label{BPT}

Here we collect some basic information about iterations of the map $T$. We recall the notation
$$
(x_k,y_k)=T^k(x,y), k\in\Z.
$$

We begin with an obvious fact. However, since it is central for the analysis, we state it as a lemma.

\begin{lemma}\label{P_L0}
If $y,\eta\in \T$ and $x\in \T$, then $T(x,[y,\eta])=\{x+\omega\}\times [y_1,\eta_1]$.
\end{lemma}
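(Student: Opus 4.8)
The plan is to reduce this to a one-dimensional statement about orientation-preserving circle homeomorphisms applied fiberwise. Fixing $x\in\T$, the definition of $T$ gives $T(x,z)=(x+\omega,\psi_x(z))$ for every $z\in\T$, where $\psi_x(z):=g(x)+f(z)$; hence $T(x,[y,\eta])=\{x+\omega\}\times\psi_x([y,\eta])$ and it remains only to show $\psi_x([y,\eta])=[y_1,\eta_1]$, since by definition $y_1=g(x)+f(y)=\psi_x(y)$ and $\eta_1=\psi_x(\eta)$. Now $f$ is an orientation-preserving bi-Lipschitz homeomorphism of $\T$, so it has a continuous strictly increasing lift $F:\R\to\R$ with $F(t+1)=F(t)+1$; choosing $c\in\R$ with $\pi(c)=g(x)$ and setting $\Psi(t):=c+F(t)$, the map $\Psi$ is continuous, strictly increasing, satisfies $\Psi(t+1)=\Psi(t)+1$, and $\pi\circ\Psi=\psi_x\circ\pi$. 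In particular $\psi_x$ is an orientation-preserving homeomorphism of $\T$ with lift $\Psi$.

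Next I would use the explicit description of the oriented arcs from Section 2.1. Let $\widetilde y\in[0,1)$ be the representative of $y$ and let $\widetilde\eta'$ be the unique representative of $\eta$ with $\widetilde y\le\widetilde\eta'<\widetilde y+1$, so that $[y,\eta]=\pi([\widetilde y,\widetilde\eta'])$ in all cases (this also covers the degenerate case $y=\eta$, where both sides reduce to the single point $\{y\}$). Because $\Psi$ is continuous and strictly increasing it maps the real interval $[\widetilde y,\widetilde\eta']$ order-isomorphically onto $[\Psi(\widetilde y),\Psi(\widetilde\eta')]$, so $\psi_x([y,\eta])=\pi([\Psi(\widetilde y),\Psi(\widetilde\eta')])$. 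Pick $m\in\Z$ with $\widetilde{y_1}:=\Psi(\widetilde y)-m\in[0,1)$; then $\pi(\widetilde{y_1})=\psi_x(y)=y_1$, and using $\Psi(t+1)=\Psi(t)+1$ together with $\widetilde y\le\widetilde\eta'<\widetilde y+1$ one obtains $\widetilde{y_1}\le\Psi(\widetilde\eta')-m<\widetilde{y_1}+1$ with $\pi(\Psi(\widetilde\eta')-m)=\eta_1$. Comparing $\pi([\widetilde{y_1},\Psi(\widetilde\eta')-m])$ with the definition of $[y_1,\eta_1]$ — splitting into the two cases $\Psi(\widetilde\eta')-m<1$ and $\Psi(\widetilde\eta')-m\ge1$, which are precisely the cases $\widetilde{y_1}\le\widetilde{\eta_1}$ and $\widetilde{y_1}>\widetilde{\eta_1}$ for the representative $\widetilde{\eta_1}\in[0,1)$ of $\eta_1$ — shows the two sets coincide, and hence $\psi_x([y,\eta])=[y_1,\eta_1]$.

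There is no genuine difficulty here; the only thing requiring attention is the bookkeeping with the chosen lifts $\widetilde y,\widetilde\eta'$ and the normalizing integer $m$, i.e. checking that the image interval, once reduced modulo $1$, matches both branches of the definition of $[y_1,\eta_1]$. If one prefers, this computation can be bypassed altogether by invoking the standard fact that an orientation-preserving circle homeomorphism preserves the cyclic order on $\T$ and therefore carries the positively oriented arc from $y$ to $\eta$ onto the positively oriented arc from $\psi_x(y)$ to $\psi_x(\eta)$ — which is exactly the claim — and I would likely just cite this.
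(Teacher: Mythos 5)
Your argument is correct and is the same as the paper's: the fiber map $z\mapsto g(x)+f(z)$ is an orientation-preserving circle homeomorphism, and such a map carries the positively oriented arc $[y,\eta]$ onto the positively oriented arc between the images of the endpoints. The paper simply states this in one line (``since $f$ preserves the orientation we have $f([y,\eta])=[y_1,\eta_1]$''), whereas you unpack it via lifts and the explicit definition of oriented arcs; both are fine, and your closing remark that one could just cite cyclic-order preservation is exactly the paper's level of detail.
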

\begin{proof}Since $f$ preserves the orientation we have $f([y,\eta])=[y_1,\eta_1]$; here 
$y_1=\pi_2(T(x,y))$ and $\eta_1=\pi_2(T(x,\eta))$.
\end{proof}

The statement of the next lemma was included in the discussion prior to the definition of the strip $S$; but for later
references we state it here as a lemma.
\begin{lemma}\label{P_L1}
If $(x,y)\in \T\times (\T\setminus A)$ then $T(x,y)\in S$.
\end{lemma}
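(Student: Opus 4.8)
The statement to prove is Lemma \ref{P_L1}: if $(x,y)\in \T\times(\T\setminus A)$ then $T(x,y)\in S$.

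\medskip

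\noindent\emph{Proof plan.} This is an immediate unwinding of the definitions, so the plan is simply to trace through them carefully. Recall that $R = f(\T\setminus A)$ was defined as the image under $f$ of the complement of $A$, and that the strip $S$ was defined by
$$
S=\bigcup_{x\in\T}\{x+\omega\}\times (R+g(x)).
$$
So I would start from a point $(x,y)$ with $y\in\T\setminus A$. The first step is to observe that, since $y\in\T\setminus A$, we have $f(y)\in f(\T\setminus A)=R$ by the very definition of $R$. The second step is to compute $T(x,y)=(x+\omega,\,g(x)+f(y))$ and note that its second coordinate $g(x)+f(y)$ lies in $g(x)+R = R+g(x)$ because $f(y)\in R$. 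The third and final step is to recognize that a point whose first coordinate is $x+\omega$ and whose second coordinate lies in $R+g(x)$ is, by the displayed formula for $S$ (taking the term of the union indexed by this particular $x$), an element of $S$. This closes the argument.

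\medskip

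\noindent\emph{Main obstacle.} There is essentially no obstacle here; the only thing to be careful about is bookkeeping with the $\omega$-shift in the index of the union defining $S$ — one must use the second form $\bigcup_{x\in\T}\{x+\omega\}\times(R+g(x))$ rather than getting confused with the equivalent reindexed form $\bigcup_{x\in\T}\{x\}\times(R+g(x-\omega))$. Beyond matching these indices, the proof is a one-line consequence of the definitions of $R$, $S$, and $T$, which is exactly why the excerpt flags it as "an obvious fact" stated as a lemma only for ease of later reference.
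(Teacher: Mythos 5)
Your proof is correct and follows exactly the same route as the paper's: $y\notin A$ gives $f(y)\in R$, hence $g(x)+f(y)\in R+g(x)$, and thus $T(x,y)=(x+\omega,g(x)+f(y))\in S$ by the definition of $S$. The bookkeeping remark about which form of the union to use is a sensible caution but does not alter the argument.
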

\begin{proof}Since $y\notin A$ it follows by definition that $f(y)\in R$; hence $g(x)+f(y)\in R+g(x)$. Thus
$
T(x,y)=(x+\omega, g(x)+f(y))\in S.
$
\end{proof}
The previous lemma, combined with Lemma \ref{P_LA}, gives us
\begin{lemma}\label{P_L2}
If $(x,y)\in S$ and $x\notin \inn(I_0+\omega)$ then $T(x,y)\in S$. 
\end{lemma}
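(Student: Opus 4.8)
The statement is a direct consequence of the two preceding lemmas together with the second part of Lemma \ref{P_LA}, so the plan is essentially to chain these together and observe that the only case-split needed is handled by the hypothesis $x\notin\inn(I_0+\omega)$. First I would apply Lemma \ref{P_LA}(2): since $(x,y)\in S$ and $x\notin\inn(I_0+\omega)$, we immediately get $y\in\overline{\T\setminus A'}\subset B$. In particular $y\notin A'$, and since $A\subset A'$ we have $y\notin A$, i.e. $y\in\T\setminus A$.

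Next I would feed this into Lemma \ref{P_L1}, applied to the point $(x,y)$: because $y\in\T\setminus A$, we conclude $T(x,y)\in S$, which is exactly the claim. So the proof is just the two-line composition
\begin{equation*}
(x,y)\in S,\ x\notin\inn(I_0+\omega)\ \Longrightarrow\ y\in\T\setminus A'\subset\T\setminus A\ \Longrightarrow\ T(x,y)=(x+\omega,g(x)+f(y))\in S.
\end{equation*}

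There is no real obstacle here; the content of the lemma is entirely absorbed into the earlier results, and the only thing to be careful about is that one must use $\inn(I_0+\omega)$ (not $I_0+\omega$) so that Lemma \ref{P_LA}(2) — rather than the weaker part (1) — applies, giving membership in the closed set $\overline{\T\setminus A'}$ and hence the clean inclusion into $B$. The lemma will later be iterated: as long as the $x$-coordinates of an orbit avoid the (small) interior of $I_0+\omega$, the orbit stays trapped in the strip $S$, which is why it is stated separately for convenience.
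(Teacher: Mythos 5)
Your proof is correct and matches the paper's (implicit) argument exactly: the paper simply notes that Lemma \ref{P_L2} follows by combining Lemma \ref{P_L1} with Lemma \ref{P_LA}, which is precisely the two-step composition you wrote out.
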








The last lemma in this subsection deals with backward iteration of $T$. 
\begin{lemma}\label{P_L4}
If $(x,y)$ are such that $x_1\notin I_0+\omega$ and $y_1\in A'$, then $y\in A$.
\end{lemma}
\begin{proof}Assume that $y\in \T\setminus A$. Then $(x_1,y_1)\in S$ by Lemma \ref{P_L1}. Since $y_1\in A'$, it follows from 
the definition of $I_0$ that $x_1\in I_0+\omega$. Contradiction.
\end{proof}



\subsection{Some topological facts}\label{stf}

Here we collect some elementary topological facts which we will use to control the geometry in our analysis. The lemmas below
are tailored for the exact situation which we will have later.

First we introduce the following notation (see Figure \ref{fig3}).
\begin{defi}
Assume that $I=[s,t]\subset \T$ is an interval and that the functions $\vf,\psi:I\to\mathbb{T}$ are continuous.
Assume further that $\psi(\xi)=\vp(\xi)$ for some $\xi\in I$, and that the lifts $\tvp,\tpsi$ are chosen so that $\tvp(\xi)=\tpsi(\xi)$.
If $\tpsi(x)<\tvp(\xi)$ for all $x<\xi$ sufficiently close to $\xi$, and if $\tpsi(x)>\tvp(\xi)$ for all $x>\xi$ sufficiently close to $\xi$,
we say that $\psi$ \emph{overtakes $\vp$ at $\xi$}. 
\end{defi}

\begin{figure}
\psfrag{b}{$y=\psi(x)$}
\psfrag{a}{$y=\vp(x)$}
\psfrag{c}{$I$}
\psfrag{d}{$\xi$}
\includegraphics[width=4cm]{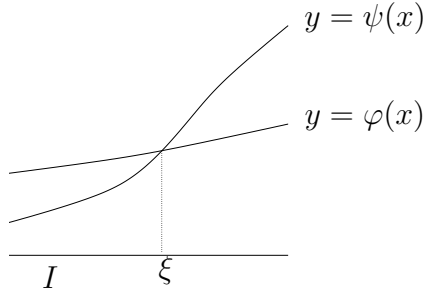}
\caption{$\psi$ overtakes $\vp$ at $\xi$.}\label{fig3}
\end{figure}

An obvious case when this happens is when $\psi$ is strictly increasing, and $\vf$ is a constant function. Then $\psi$ overtakes
$\vf$ at each point where the graphs intersect.

Since the map $f:\T\to\T$ preserves orientation, we immediately get:
\begin{lemma}\label{Tf_0}
Assume that $I=[s,t]\subset \T$ is an interval and that the functions $\vf,\psi:I\to\mathbb{T}$ are continuous.
Assume further that $\psi$ overtakes $\vf$ at $x=\xi_i$ ($i=1,2,\ldots,k$), and that $\psi(x)\neq\vf(x)$ for all $x\neq \xi_i$.
Then the same holds for $\psi_1$ and $\vf_1$, where $\psi_1(x)=\pi_2(T(x,\psi(x)))$ and $\vf_1(x)=\pi_2(T(x,\vf(x)))$.
\end{lemma}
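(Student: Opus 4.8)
The plan is to reduce the statement about $\psi_1,\vf_1$ to the corresponding statement about $\psi,\vf$, using only that the fibre map $f$ is an orientation-preserving homeomorphism of $\T$. First I would fix $x=\xi_i$ and choose lifts. By hypothesis there are continuous lifts $\tvp,\tpsi:[s,t]\to\R$ with $\tpsi(\xi_i)=\tvp(\xi_i)$; pick a lift $F$ of the relevant branch of $y\mapsto \pi_2(T(x,y))=g(x)+f(y)$, i.e.\ $F(x,\cdot)$ a lift of $f(\cdot)$ shifted by a lift of $g(x)$. Then $\widetilde{\vf_1}(x):=F(x,\tvp(x))$ and $\widetilde{\psi_1}(x):=F(x,\tpsi(x))$ are continuous lifts of $\vf_1$ and $\psi_1$ respectively, and since $\tpsi(\xi_i)=\tvp(\xi_i)$ we get $\widetilde{\psi_1}(\xi_i)=\widetilde{\vf_1}(\xi_i)$, so these are admissible lifts for checking the overtaking condition at $\xi_i$.

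Next I would use monotonicity of $f$. Because $f$ preserves orientation, any lift $y\mapsto F(x,y)$ (for fixed $x$) is strictly increasing on $\R$. Hence, comparing the two values at a point $x$ near $\xi_i$: since $\tpsi(x)<\tvp(\xi_i)=\tpsi(\xi_i)$ for $x<\xi_i$ close to $\xi_i$ (the overtaking hypothesis for $\psi$ over $\vf$, using $\tvp(\xi_i)=\tpsi(\xi_i)$), applying the strictly increasing map $F(x,\cdot)$ gives $F(x,\tpsi(x))<F(x,\tpsi(\xi_i))$; but I must compare with $\widetilde{\vf_1}(\xi_i)=F(\xi_i,\tvp(\xi_i))$, not $F(x,\tpsi(\xi_i))$ — so I need to be careful that the base point of $F$ has moved from $\xi_i$ to $x$. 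The clean way around this is to note that Lemma~\ref{P_L0} already packages exactly this: for fixed $x$, $T$ maps the oriented arc $[\vf(x),\psi(x)]$ to the oriented arc $[\vf_1(x),\psi_1(x)]$, preserving orientation. So "$\psi(x)$ lies on the positive side of $\vf(x)$, just past it" is equivalent, fibrewise, to "$\psi_1(x)$ lies on the positive side of $\vf_1(x)$, just past it", and likewise for the negative side; and at $x=\xi_i$ equality is preserved. This is genuinely a pointwise-in-$x$ statement, so no issue with $x$ varying arises once it is phrased via oriented arcs.

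Concretely: fix $i$. For $x<\xi_i$ sufficiently close to $\xi_i$ we have, by hypothesis, $\tpsi(x)<\tvp(\xi_i)$; I would like to conclude $\tpsi(x)<\tvp(x)$ as well, but the definition of "overtakes" only references the common value $\tvp(\xi_i)=\tpsi(\xi_i)$, so it suffices to show $\widetilde{\psi_1}(x)<\widetilde{\vf_1}(\xi_i)$. Since $\tpsi(x)<\tvp(\xi_i)$ and $x$ is close to $\xi_i$, by continuity of $\tvp$ I also have $\tpsi(x)<\tvp(\xi_i)$ with the value $\tvp(x)$ close to $\tvp(\xi_i)$; combining with $\tpsi(x)<\tvp(\xi_i)$ and strict monotonicity of the (common, for the single base point $\xi_i$) lift of $T$ — or, more robustly, invoking Lemma~\ref{P_L0} at the base point $x$ together with the fact that $[\tpsi(x),\tvp(\xi_i)]$ maps to an arc of the same orientation — yields $\widetilde{\psi_1}(x)<\widetilde{\vf_1}(\xi_i)$. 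The case $x>\xi_i$ is symmetric, giving $\widetilde{\psi_1}(x)>\widetilde{\vf_1}(\xi_i)$. Thus $\psi_1$ overtakes $\vf_1$ at each $\xi_i$. Finally, for $x\neq\xi_i$ we have $\psi(x)\neq\vf(x)$, hence $f(\psi(x))\neq f(\psi$'s$\ldots)$ — more simply, $T(x,\cdot)$ is injective on the fibre, so $\psi(x)\neq\vf(x)$ implies $\psi_1(x)\neq\vf_1(x)$. This completes the argument.

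The only real subtlety — and the step I would write out most carefully — is the bookkeeping of lifts: ensuring that the chosen lift of $\vf_1$ and of $\psi_1$ agree at $\xi_i$, and that the "positive/negative side" comparison at a moving base point $x$ is legitimate. Using Lemma~\ref{P_L0} (orientation of the image arc) rather than a global lift of $T$ sidesteps any difficulty with $g(x)$ changing with $x$, since for each fixed $x$ the map $f$ is simply an orientation-preserving homeomorphism and the statement is purely fibrewise. Everything else is immediate from injectivity and orientation-preservation of $f$.
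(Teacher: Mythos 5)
Your core argument matches the paper's: for each fixed $x$, the fibre map $y\mapsto g(x)+f(y)$ is an orientation-preserving circle homeomorphism, so the oriented relationship of $\psi_1(x)$ to $\vf_1(x)$ is the same as that of $\psi(x)$ to $\vf(x)$, and injectivity preserves the non-crossing away from the $\xi_i$. The paper's proof is exactly that one-liner ("$\psi_1(x)=g(x)+f(\psi(x))$, $\vf_1(x)=g(x)+f(\vf(x))$, $f$ strictly increasing"). The extra difficulty you wrestle with in the middle paragraph --- the ``moving base point'' from $\xi_i$ to $x$ --- comes from reading the defining inequality literally as $\tpsi(x)<\tvp(\xi)$. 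That is almost certainly a misprint for $\tpsi(x)<\tvp(x)$: every downstream use of overtaking (the proofs of Lemmas~\ref{Tf_2} and~\ref{Tf_3}, where one needs $\tpsi-\tvp$ to cross an integer value monotonically at $\xi$) compares the two functions at the same argument, and with that reading the $g(x)$ contribution cancels identically in $\widetilde{\psi_1}(x)-\widetilde{\vf_1}(x)$ and the ``moving base point'' issue does not arise at all. Be aware, though, that the two fixes you sketch for the literal reading don't actually close it as stated: applying the lift of the fibre map at base point $\xi_i$ to $\tpsi(x)<\tvp(\xi_i)$ produces $F(\xi_i,\tpsi(x))$ rather than $\widetilde{\psi_1}(x)=F(x,\tpsi(x))$, and invoking Lemma~\ref{P_L0} at base point $x$ compares $\psi_1(x)$ with $\pi_2(T(x,\vf(\xi_i)))$, not with $\vf_1(\xi_i)=\pi_2(T(\xi_i,\vf(\xi_i)))$; in either case the discrepancy $G(x)-G(\xi_i)$ is left unaddressed and one would separately need to invoke strict monotonicity of $g$ (which does hold, by $(A1)$, but the paper's proof pointedly does not appeal to it). Since the intended reading of the definition makes this whole concern vacuous, your proof is essentially correct and in the spirit of the paper; I would simply drop the middle paragraph and state the pointwise cancellation directly.
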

\begin{proof}Note that $\psi_1(x)=g(x)+f(\psi(x))$ and $\vf_1(x)=g(x)+f(\vf(x))$, and recall that $f$ is strictly increasing.
\end{proof}




We recall the definition of the point $\beta$ in (\ref{beta}) and that of $A''\subset A'$ in (\ref{app}).
\begin{lemma}\label{Tf_1}
Assume that $I=[s,t]\subset \T$ is an interval and that the function $\psi:I\to\mathbb{T}$ is continuous and strictly increasing.
If $\psi^{-1}(A'')$ consists of one interval , $I^1$, and $\psi(I^1)=A''$, 
then $[\beta,\psi(s)]\subset \T\setminus A$ and $[\beta,\psi(t)]\subset \T\setminus A$. Moreover, there is a unique $\xi\in I^1$
such that $\psi(\xi)=\beta$, this $\xi\in \inn(I^1)$ (and, trivially, $\psi$ overtakes the constant function $\vf(x)=\beta$ at $\xi$).  
\end{lemma}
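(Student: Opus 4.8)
The plan is to exploit two facts: that $\psi$ is strictly increasing, and that $\psi^{-1}(A'')=I^1$ with $\psi(I^1)=A''$, which together say that $\psi$ maps the endpoints of $I^1$ exactly onto the endpoints of the interval $A''$, and maps the complement $I\setminus I^1$ into $\T\setminus A''$. First I would handle the two boundary statements $[\beta,\psi(s)]\subset \T\setminus A$ and $[\beta,\psi(t)]\subset \T\setminus A$. Write $I^1=[s_1,t_1]$ with $s\le s_1\le t_1\le t$. By hypothesis $\psi(s_1)$ and $\psi(t_1)$ are the two endpoints of $A''$ (in the correct order, since $\psi$ is increasing); in particular $\psi(s),\psi(t)\notin \inn(A'')$ because $s<s_1$ forces $\psi(s)$ to lie on the "lower" side of $A''$ and $t>t_1$ forces $\psi(t)$ to lie on the "upper" side (and if $s=s_1$ or $t=t_1$ the value is an endpoint of $A''$, still in $\T\setminus \inn(A'')$). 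Hence $\psi(s),\psi(t)\in \T\setminus A''$, and now Lemma \ref{P_betalemma} applies directly: for any $y\in\T\setminus A''$ we have $[\beta,y]\subset \T\setminus A$. This gives both inclusions.

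Next I would produce the point $\xi$. Since $\psi$ restricted to $I^1$ is a continuous, strictly increasing surjection onto the interval $A''$, and since $\beta=a_2+\ve/4\in A''$ lies in the interior of $A''$ (it is at distance $\ve/4$ from $A$, hence strictly between the two endpoints of $A''=\{y:\dist(y,A)\le\ve/2\}$), there is a unique $\xi\in I^1$ with $\psi(\xi)=\beta$; existence is the intermediate value theorem for a lift of $\psi|_{I^1}$, uniqueness is strict monotonicity. Moreover $\xi\in\inn(I^1)$: if $\xi$ were an endpoint of $I^1$ then $\psi(\xi)=\beta$ would be an endpoint of $A''$, contradicting $\beta\in\inn(A'')$. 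Finally, that $\psi$ overtakes the constant function $\vf\equiv\beta$ at $\xi$ is immediate from the remark following the definition of "overtakes": a strictly increasing function overtakes any constant function at every point where their graphs meet (choosing the lift $\tpsi$ with $\tpsi(\xi)=\beta$, strict monotonicity of $\tpsi$ gives $\tpsi(x)<\beta$ for $x<\xi$ near $\xi$ and $\tpsi(x)>\beta$ for $x>\xi$ near $\xi$).

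The only point requiring a little care — and the one I would state most explicitly — is the orientation bookkeeping in the first part: one must be sure that $\psi(s)$ and $\psi(t)$ land on the two \emph{correct} sides of $A''$ so that the conclusion "$\psi(s),\psi(t)\notin \inn A''$" is valid, rather than accidentally having the graph wrap around the circle. This is where the hypotheses $\psi(I^1)=A''$ together with $|A''|<3\ve\ll 1$ and $\psi$ strictly increasing on the interval $I$ are used: a strictly increasing continuous map of an interval into $\T$ that is injective on $I$ cannot "wrap", so the picture in Figure \ref{fig1} is faithful and the endpoints behave as claimed. Everything else is a routine application of Lemma \ref{P_betalemma} and the intermediate value theorem, so I do not expect a genuine obstacle here.
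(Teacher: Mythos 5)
Your proof is essentially correct and takes the route the paper has in mind (the paper's own proof is just ``Easy. See Figure \ref{fig4}''), but there is one small logical slip in the first half. You correctly establish that $\psi(s),\psi(t)\notin\inn(A'')$, explicitly allowing the boundary cases $s=s_1$ or $t=t_1$ in which $\psi(s)$ or $\psi(t)$ is an \emph{endpoint} of $A''$. But $A''=\{y:\dist(y,A)\leq\ve/2\}$ is a \emph{closed} set, so an endpoint of $A''$ lies in $A''$, not in $\T\setminus A''$. Your next sentence, ``Hence $\psi(s),\psi(t)\in\T\setminus A''$, and now Lemma \ref{P_betalemma} applies directly,'' is therefore a non sequitur precisely in the edge case you yourself flagged.

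The gap is small and easy to close: the conclusion of Lemma \ref{P_betalemma} also holds for $y\in\partial A''$. Writing $A=(a_1,a_2)$, the two boundary points of $A''$ are $a_1-\ve/2$ and $a_2+\ve/2$, and $\beta=a_2+\ve/4$; the positively oriented arc $[\beta,\,a_2+\ve/2]$ is just $[a_2+\ve/4,a_2+\ve/2]$, which misses $A$, while $[\beta,\,a_1-\ve/2]$ is the long arc from $a_2+\ve/4$ around to $a_1-\ve/2$, which also misses $(a_1,a_2)$. So the hypothesis of Lemma \ref{P_betalemma} can be relaxed to $y\in\T\setminus\inn(A'')$, after which your argument goes through verbatim. (A second, cosmetic remark: ``a strictly increasing continuous map of an interval into $\T$ cannot wrap'' is not literally true in general — $x\mapsto 2x$ on $[0,1]$ wraps — what you actually need, and have, is that $\psi(I^1)=A''$ with $|A''|<1$, which forces $\psi|_{I^1}$ to be injective.) The second half of your proof — existence, uniqueness, and interiority of $\xi$ via the intermediate value theorem for a lift and strict monotonicity, and the overtaking remark — is fine as written.
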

\begin{proof}Easy. See Figure \ref{fig4}
\end{proof}

\begin{figure}
\centering
 \begin{minipage}{0.4\textwidth}
\psfrag{b}{$\beta$}
\psfrag{i}{$\psi^{-1}(A'')$}
\psfrag{A}{$A$}
\psfrag{Ap}{$A''$}
\psfrag{int1}{$[\beta,\psi(t)]$}
\psfrag{int2}{$[\beta,\psi(s)]$}
\psfrag{y}{$y=\psi(x)$}
\psfrag{s}{$s$}
\psfrag{t}{$t$}
\psfrag{xi}{$\xi$}
\includegraphics[width=6.5cm]{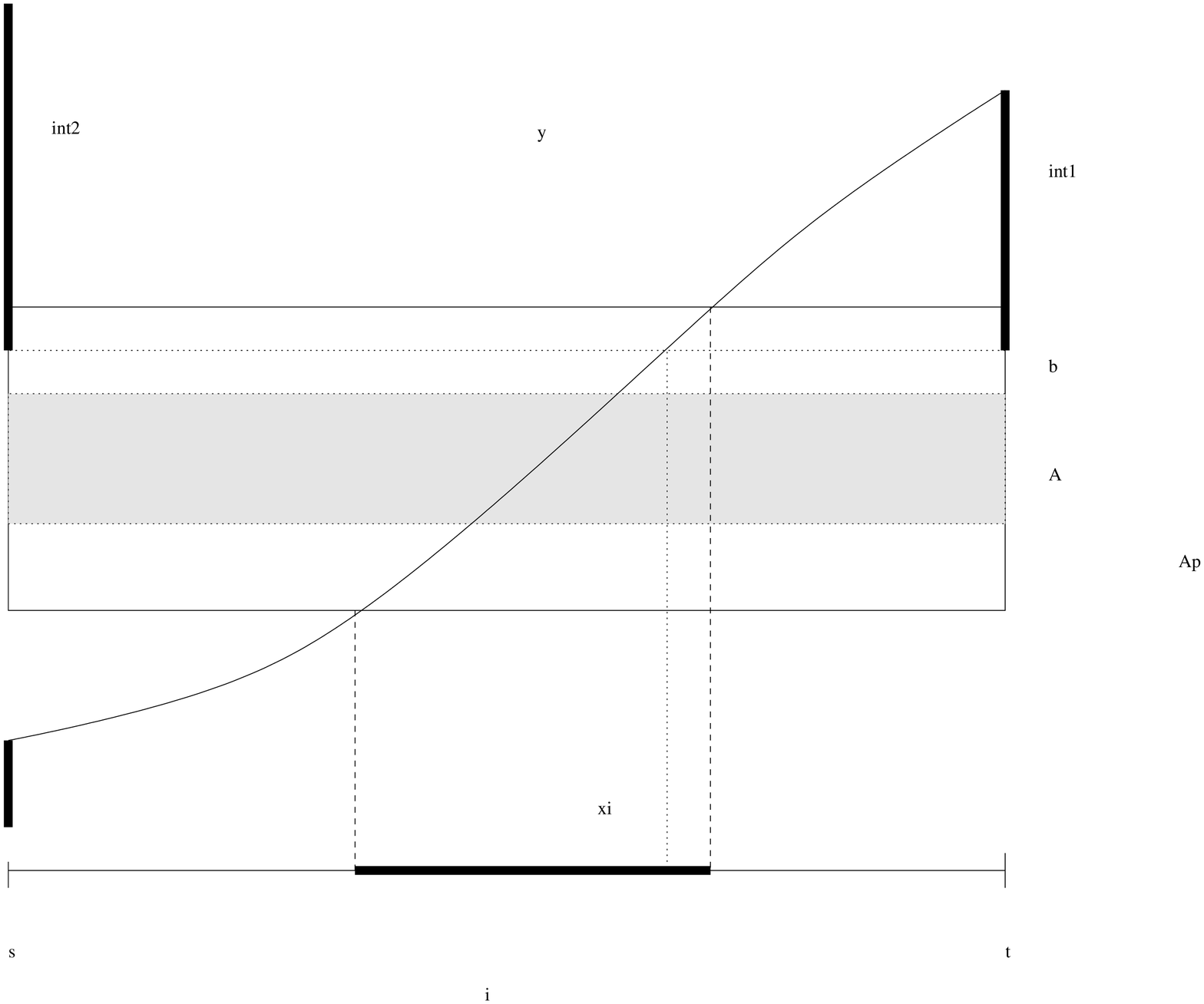}
\caption{The statement of Lemma \ref{Tf_1}.}\label{fig4}
\end{minipage}
\hfill
 \begin{minipage}{0.4\textwidth}
\psfrag{b}{$\beta$}
\psfrag{I}{$\vf^{-1}(A')$}
\psfrag{A}{$A$}
\psfrag{Ap}{$A'$}
\psfrag{y1}{$y=\psi(x)$}
\psfrag{y2}{$y=\vf(x)$}
\psfrag{x}{$\xi$}
\includegraphics[width=5cm]{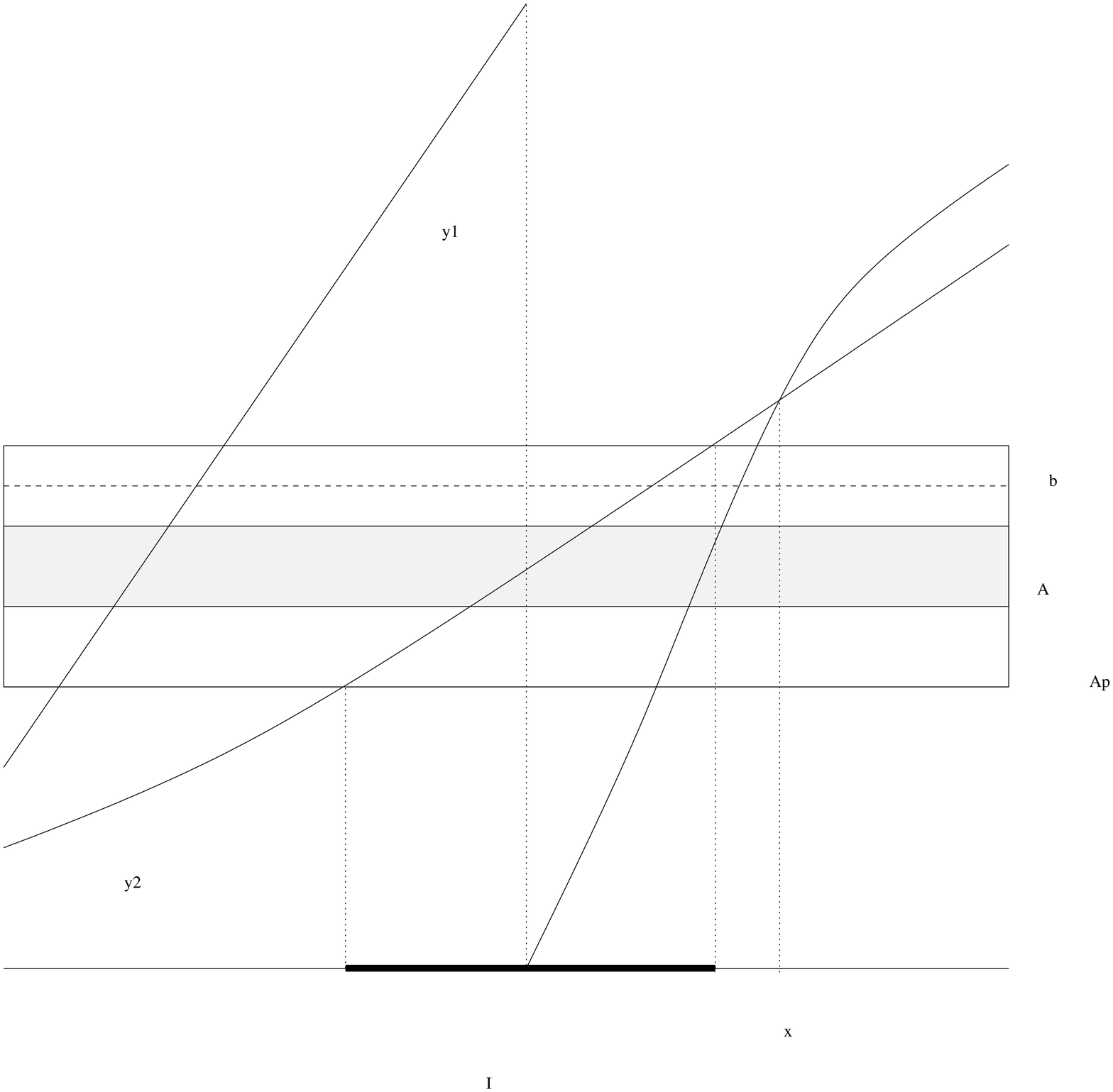}
\caption{Figure for Lemma \ref{Tf_3}.}\label{fig5}
\end{minipage}
\end{figure}

\begin{lemma}\label{Tf_3}
Assume that $I=[s,t]\subset \T$ is an interval and that the functions $\vf,\psi:I\to\mathbb{T}$ are continuous and strictly increasing,
and that $\vf^{-1}(A')$ consists of one interval, $I^0$, and $\vf(I^0)=A'$. Assume further that
$[\vf(s),\psi(s)], [\vf(t),\psi(t)]\subset \T\setminus A$, and that there is a unique $\xi\in I$ such that
$\vf(\xi)=\psi(\xi)$, $\xi\in\inn(I)$, and that $\psi$ overtakes $\vf$ at $\xi$. 
Then the equation $\psi(x)=\beta$ has exactly two solutions $\xi_1,\xi_2\in I$, these $\xi_1,\xi_2\in\inn(I)$ (and, trivially, $\psi$ overtakes the constant function
$\phi(x)=\beta$ at $\xi_1,\xi_2$). Furthermore, $[\beta,\psi(s)]\subset \T\setminus A$ and $[\beta,\psi(t)]\subset \T\setminus A$.
\end{lemma}

\begin{proof} We assume that $I=[s,t]\subset \R$. 
Let $[a_1',a_2']\subset \R$ be such that $\pi([a'_1,a'_2])=A'$,  and let $a_1'<a_1<a_2<a_2'$ be such that
$\pi((a_1,a_2))=A$. We also let $a_2<b<a_2'$ be such that $\pi(b)=\beta$ (recall the definition of $\beta$ in (\ref{beta})).

Let $\tvp$ be the lift of $\vf$
such that $a_2'<\tvp(s)\leq a'_1+1$. From the assumptions we must have $a'_2+1\leq \tvp(t)<a'_1+2$. We let $\tpsi$ be the lift of $\psi$
such that $\tvp(s)<\tpsi(s)\leq a_1+1$ (recall that $[\vf(s),\psi(s)]\subset \T\setminus A$). Note that $a_2<b<\tpsi(s)\leq a_1+1$, and thus 
$[\beta,\psi(s)]\subset \T\setminus A$.

Since there is only one $\xi\in I$ such that $\vf(\xi)=\psi(\xi)$, and since this
$\xi$ is in the interior of $I$, and $\psi$ overtakes $\vf$ at $\xi$, we must have $\tpsi(\xi)=\tvp(\xi)+1$, and
$\tvp(x)+1<\tpsi(x)<\tvp(x)+2$ for all $x>\xi$. The assumption
that $[\vf(t),\psi(t)]\subset \T\setminus A$ therefore gives us that $\tvp(t)+1<\tpsi(t)<a_1+3$. 
Since $b+2<a_2'+2<\tvp(t)+1$, this last inequality shows that $[\beta,\psi(t)]\subset \T\setminus A$. Moreover, we note that 
$[a_1+1,a_2'+2]\subset \tpsi(I)\subset (a_2',a_1+3)$. Since $b+1,b+2\in [a_1+1,a_2'+2]$, and $b,b+3\notin (a_2',a_1+3)$, and since 
$\tpsi$ is strictly increasing, this shows that the equation $\psi(x)=\beta$ has exactly two solutions $\xi_1,\xi_2\in I$, and that $\xi_1,\xi_2\in\inn(I)$.
\end{proof}

\begin{lemma}\label{Tf_2}
Assume that $I=[s,t]\subset \T$ is an interval and that the 
functions $\vf,\psi:I\to\mathbb{T}$ are continuous and strictly increasing. 
Assume further that $\vf(x)\in \T\setminus A'$ for all $x\in I$, and  
$[\vf(s),\psi(s)], [\vf(t),\psi(t)]\subset \T\setminus A'$.

\begin{enumerate}
\item If there is a unique $\xi\in I$ such that $\psi(\xi)=\vf(\xi)$, if $\xi\in \inn(I)$, and if $\psi$ overtakes $\vf$ at $\xi$, 
then $\psi^{-1}(A')$ consists of one single interval, $I'$, and $\psi(I')=A'$.

\item If there are exactly two solutions $\xi_1,\xi_2$ to $\psi(x)=\vf(x)$,  if $\xi_1,\xi_2\in \inn(I)$, and if $\psi$ overtakes $\vf$ at $\xi_i$ ($i=1,2$), 
then  
$\psi^{-1}(A')$ consists of two interval, $I', I''$, and $\psi(I')=\psi(I'')=A'$.
\end{enumerate}
 
\end{lemma}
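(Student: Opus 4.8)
The plan is to reduce both statements to a lift-counting argument entirely analogous to the one used in the proof of Lemma \ref{Tf_3}. First I would set up coordinates: lift to $I=[s,t]\subset\R$, choose $[a_1',a_2']\subset\R$ with $\pi([a_1',a_2'])=A'$, and pick the lift $\tvp$ of $\vf$ so that $\tvp(s)$ lands just above $a_2'$ (possible since $\vf(s)\in\T\setminus A'$), say $a_2'<\tvp(s)\le a_1'+1$. Since $\vf$ is strictly increasing and $\vf(x)\in\T\setminus A'$ for \emph{all} $x\in I$, the graph of $\tvp$ stays in the complementary band, so $\tvp(t)<a_1'+1$ as well; thus the \emph{whole} curve $\tvp(I)$ lies in the single lifted gap $(a_2',a_1'+1)$ between consecutive copies of $A'$. (This is the key simplification over Lemma \ref{Tf_3}, where $\vf$ was allowed to cross $A'$.) Then I would choose the lift $\tpsi$ of $\psi$ so that at the first crossing point it agrees with $\tvp$ up to the appropriate integer; the hypothesis $[\vf(s),\psi(s)]\subset\T\setminus A'$ pins down where $\tpsi(s)$ sits relative to $\tvp(s)$ and the nearest copies of $A'$, and similarly at $t$.

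For part (1): with a unique crossing $\xi\in\inn(I)$ at which $\psi$ overtakes $\vf$, the overtaking condition forces $\tpsi(\xi)=\tvp(\xi)+1$ (it must jump by exactly one copy, since $\psi(\xi)=\vf(\xi)$ on $\T$ and $\psi$ passes from below to above), and $\tpsi(x)<\tvp(x)+1$ for $x<\xi$, $\tvp(x)+1<\tpsi(x)$ for $x>\xi$. Combining this with $[\vf(s),\psi(s)],[\vf(t),\psi(t)]\subset\T\setminus A'$ I get $\tpsi(s)\in(a_2',a_1'+1)$ and $\tpsi(t)\in(a_2'+1,a_1'+2)$, so that along $I$ the increasing function $\tpsi$ crosses exactly one lifted copy of $A'$, namely $[a_1'+1,a_2'+1]$. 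By monotonicity its preimage is a single interval $I'$ and $\tpsi(I')=[a_1'+1,a_2'+1]$, i.e.\ $\psi^{-1}(A')=I'$ and $\psi(I')=A'$. For part (2) the argument is the same but with two overtaking crossings $\xi_1<\xi_2$: now $\tpsi(t)$ ends up shifted by two copies relative to where it would be without crossings, so $\tpsi$ sweeps across exactly two consecutive lifted copies of $A'$; the boundary hypotheses at $s,t$ guarantee the endpoints avoid $A'$, so monotonicity yields exactly two preimage intervals $I',I''$, each mapped onto $A'$.

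The one point requiring care — and the main (mild) obstacle — is verifying that the overtaking hypothesis genuinely forces the integer jump of the $\tpsi$-lift to be $+1$ at each crossing, and that between/around crossings $\tpsi$ cannot "overshoot" by landing in a further copy of $A'$ that the curve $\tvp$ does not see. This is controlled exactly as in Lemma \ref{Tf_3}: since there are \emph{no other} intersections of $\psi$ with $\vf$, the quantity $\tpsi(x)-\tvp(x)$ is monotone in $x$ except for jumping by one integer at each $\xi_i$, so it takes values in a controlled interval, and the endpoint conditions $[\vf(s),\psi(s)],[\vf(t),\psi(t)]\subset\T\setminus A'$ fix that interval precisely. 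Once the lift of $\psi$ is understood, the conclusions about $\psi^{-1}(A')$ and $\psi(\psi^{-1}(A'))=A'$ are immediate from strict monotonicity. I would simply write this out, referring to Figure for Lemma \ref{Tf_3} for the picture, as the geometry is the same with $A''$ replaced by $A'$ and $\vf$ non-constant.
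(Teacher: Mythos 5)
Your proposal is correct and mirrors the paper's own proof, which proves part (1) by exactly the lift-chasing argument you outline: fix $\tvp$ so $a_2<\tvp(x)<a_1+1$ on all of $I$ (using $\vf(I)\cap A'=\emptyset$), fix $\tpsi$ at $s$ via the arc condition, deduce $\tvp<\tpsi<\tvp+1$ on $[s,\xi)$ and $\tpsi(\xi)=\tvp(\xi)+1$, $\tvp+1<\tpsi<\tvp+2$ on $(\xi,t]$ from uniqueness of the crossing plus overtaking, and then pin $\tpsi(t)<a_1+2$ from the arc condition at $t$, so $\tpsi(I)$ meets exactly one lifted copy of $A'$. One small caution when you write it out: the difference $\tpsi-\tvp$ is continuous and need not be monotone; what you actually use (and what the paper uses) is only that it equals an integer solely at the crossing points and increases through that integer there, so it stays strictly between consecutive integers on each complementary subinterval.
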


\begin{figure}
\psfrag{i}{$\psi^{-1}(A')$}
\psfrag{A}{$A'$}
\psfrag{y1}{$y=\psi(x)$}
\psfrag{y2}{$y=\vf(x)$}
\psfrag{x}{$\xi$}
\includegraphics[width=5cm]{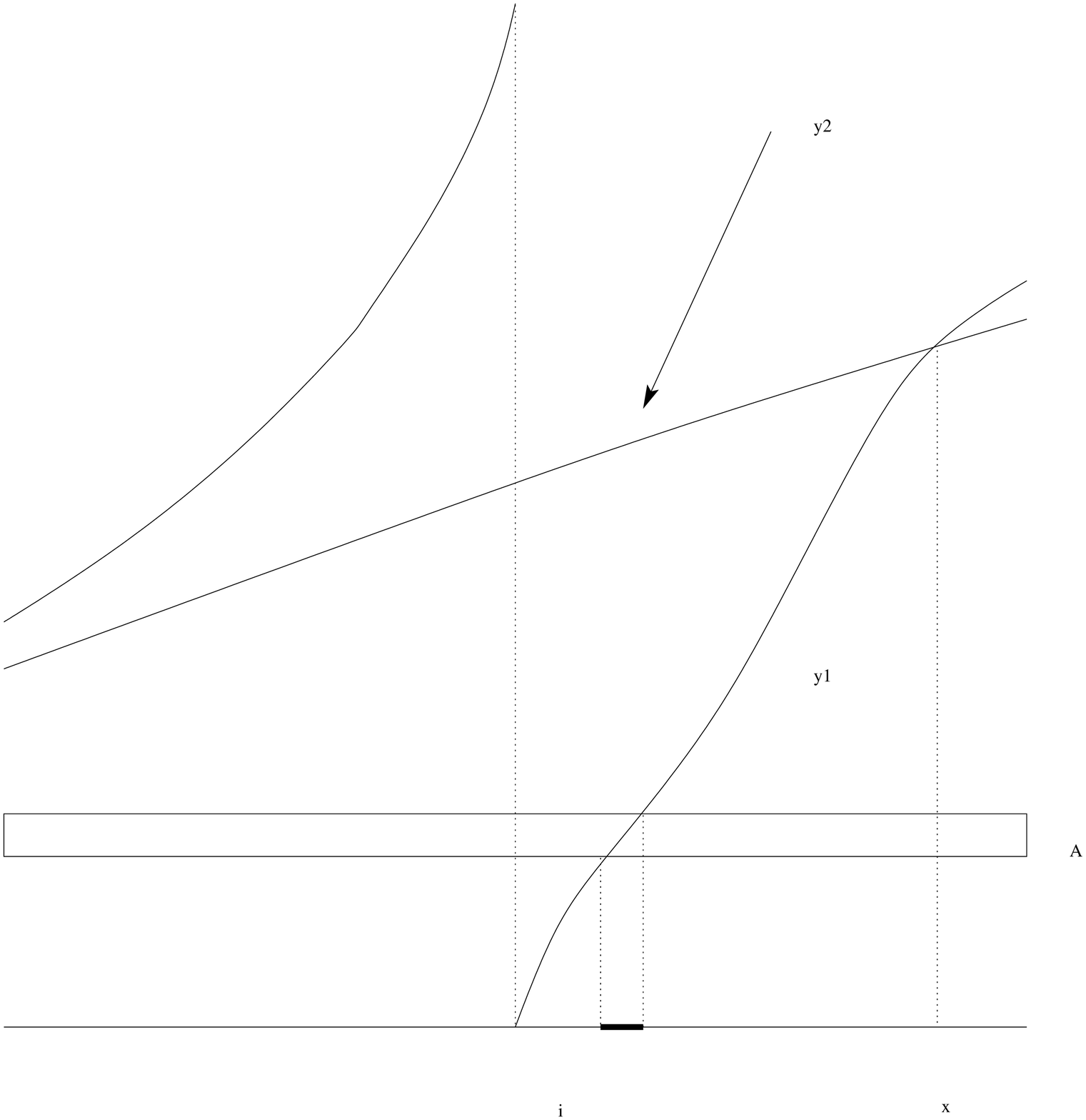}
\caption{Figure for Lemma \ref{Tf_2}.}\label{fig6}
\end{figure}

\begin{proof} We prove (1); the proof of (2) is similar.  Let $[a_1,a_2]\subset \R$ be such that $\pi([a_1,a_2])=A'$. 
By assumption we have $\vp(I)\cap A'=\emptyset$. Assume that $I=[s,t]\subset \R$. Let $\tvp$ be the lift of $\vf$
such that $a_2<\tvp(x)<a_1+1$ for all $x$, and let $\tpsi$ be the lift of $\psi$ such that $\tvp(s)<\tpsi(s)<a_1+1$ (this is possible since 
$[\vf(s),\psi(s)]\cap A'=\emptyset$). Since there is a unique $\xi\in I$ such that $\psi(\xi)=\vf(\xi)$, this point being in the interior of $I$, and
since $\psi$ overtakes $\vf$ at $\xi$, we must have $\tvp(x)<\tpsi(x)$ for all $x\in I$. From this we conclude that we must have 
$\tpsi(\xi)=\tvp(\xi)+1$, and $\tpsi(x)>\tvp(x)+1$ for all $x>\xi$. Since $\tvp(t)+1<a_1+2$,  since $[\vf(t),\psi(t)]\cap A'=\emptyset$,
and since $\tpsi(t)<\tvp(t)+2$ (otherwise the equation $\psi(x)=\vf(x)$ would have more than one solution), we see that we must have $\tpsi(t)<a_1+2$.
From this the statement in (1) follows.

\end{proof}

\end{section}

\begin{section}{Some formulae}\label{formulae}
To control the dynamics of the map $T$ we need to know how the iterates $(x_k,y_k)=T^k(x,y)$ depend on the initial point $(x,y)$. In this section
we derive various formulae for this, as well as quantitative estimates. 

We first collect a few well-known facts about Lipschitz functions.
\subsection{Some properties of Lipschitz continuous functions}\label{lipschitz}

Assume that $\vf,\psi:\T\to\T$ are Lipschitz continuous. Since Lipschitz continuity implies absolutely continuity, it follows that
$\vf$ and $\psi$ are differentiable a.e.; and if $\tvp$ is a lift of $\vf$, then the formula
$$
\tvp(b)-\tvp(a)=\int_a^b \tvp'(x)dx
$$ 
holds. 

The sum $\vf+\psi$ is clearly Lipschitz continuous. Moreover, the composition $\vf\circ \psi$ is again Lipschitz continuous, and the chain rule holds, i.e., 
$$(\vf\circ \psi)'(x)=\vf'(\psi(x))\psi'(x) \text{ for a.e. } x\in \T, $$
where $\vf'(\psi(x))\psi'(x)$ is interpreted to be zero whenever $\psi'(x)=0$. 

If $\vf$ has a Lipschitz constant $C$, then $|\vf'(x)|\leq C$  whenever the derivative exists; and if $\vf$ is bi-Lipschitz, where $\vf$ and $\vf^{-1}$ both
have Lipschitz constant $C>1$, then $C^{-1}\leq|\vf'(x)|\leq C$.

We will sometimes use the following notation: if $\varphi:I\to \T$ is Lipschitz continuous, 
then "$|\varphi'(x)|\leq C$ on $I$" means that $|\varphi'(x)|\leq C$ whenever the derivative exists, i.e., for a.e. $x\in I$. 
Note that this implies, by the integral formula above, that $\varphi$ has a Lipschitz constant $C$.

From the assumptions on $f$ and $g$  in (A1-2) we have the global bounds
$$
\frac{1}{\kappa}<g'(x)<\kappa \text{ for a.e. } x\in \T, \quad \ve^\rho<f'(y)<\ve^{-\rho} \text{ for a.e. } y\in \T;
$$
and on the intervals $A\subset \T$ and $B\subset \T$ we have
$$
f'(y)<\ve \text{ on } B; \quad
f'(y)>\ve^{-1}  \text{ on } A.
$$



\subsection{Formulae} For a fixed $x\in \T$ we consider the iterates $y_k:\T\to\T$ defined by
$$
y_k(y)=\pi_2(T^k(x,y)).
$$
Each of the iterates $y_1(y)=g(x)+f(y)$, $y_2(y)=g(x+\omega)+f(y_1(y)), \ldots$ are orientation preserving circle homeomorphisms, and they are all
Lipschitz continuous by the properties in the previous subsections. We have $y_1'(y)=f'(y)$, $y_2'(y)=f'(y_1)f'(y)$, and generally
$$
y_{k+1}'(y)=\prod_{j=0}^k f'(y_j(y)) \text{ for a.e. } y\in\T.
$$
Furthermore, if $Y\subset \T$ is an interval, then 
$$
|y_k(Y)|=\int_{Y}y_k'(y)dy.
$$

Below we shall often keep track on products (for fixed $x$) 
of the form $\prod_{j=0}^k f'(y_j(y))$ for a.e. $y$ in some interval $Y$; and our formulae will contain such expressions.
We note that $y_{k+1}(y)$ having a Lipschitz constant $C$ on $Y$ is equivalent to $\prod_{j=0}^k f'(y_j(y))\leq C$ on $Y$ since we know that $y_{k+1}$ is Lipschitz 
continuous (and $f'(y)$ is always positive).

From these observations we immediately get
\begin{lemma}\label{SF_L1}
Assume that $Y\subset \T$ is an interval, and that for some $k\geq 1$ and $x\in \T$
$$
\prod_{j=0}^{k-1}f'(y_j(y)) \underset{(\geq)}\leq  \delta \quad \text{ on } Y. 
$$                           
Then $|y_k(Y)|\underset{(\geq)}\leq  \delta|Y|$.
\end{lemma}

We shall need formulae to control the iterates (under $T$) of curves $(x,y_0(x))$, $x\in I$, where $I$ is an interval.

\begin{lemma}\label{SF_L2}
Assume that $y_0:I\to \T$  ($I$ an interval) is Lipschitz continuous, and define the functions $y_j(x)$ by
$
y_j(x)=\pi_2(T^j(x,y_0(x))).
$ 
For each $k\geq 0$ the function $y_{k+1}$ is  Lipschitz continuous on $I$ and we have the formula
$$
\begin{aligned}
y_{k+1}'(x)&=g'(x_k)+g'(x_{k-1})f'(y_k)+g'(x_{k-2})\prod_{j=k-1}^k f'(y_j) +\ldots + g'(x)\prod_{j=1}^k f'(y_j) +\\ &+ y_0'(x)\prod_{j=0}^k f'(y_j) 
\text{ for a.e. } x\in I.
\end{aligned}
$$
\end{lemma}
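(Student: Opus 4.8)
The plan is to argue by induction on $k$, using the chain rule and product rule for Lipschitz functions recalled in Subsection \ref{lipschitz}. The base case $k=0$ asserts that $y_1(x) = g(x) + f(y_0(x))$ is Lipschitz and that $y_1'(x) = g'(x) + y_0'(x) f'(y_0(x))$ for a.e.\ $x \in I$. This is immediate: $g$ is Lipschitz, $f$ is Lipschitz, $y_0$ is Lipschitz by hypothesis, so $f \circ y_0$ is Lipschitz (composition of Lipschitz maps) and $g + f\circ y_0$ is Lipschitz (sum); the derivative formula is the chain rule for $f \circ y_0$ — valid a.e., with the convention that the term $f'(y_0(x)) y_0'(x)$ is read as $0$ wherever $y_0'(x) = 0$ — added to $g'(x)$. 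Note $x_0 = x$ here, matching the last-written term $g'(x) \prod_{j=1}^{0} f'(y_j) + y_0'(x)\prod_{j=0}^0 f'(y_j) = g'(x)\cdot 1 + y_0'(x) f'(y_0)$ with the empty product equal to $1$.

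For the inductive step, suppose the statement holds for some $k \geq 0$, so $y_{k+1}$ is Lipschitz on $I$ with the stated formula. Observe that $y_{k+2}(x) = \pi_2(T^{k+2}(x,y_0(x))) = g(x_{k+1}) + f(y_{k+1}(x))$, where $x_{k+1} = x + (k+1)\omega$, so $x \mapsto g(x_{k+1})$ is a translate of $g$ and hence Lipschitz with $\frac{d}{dx} g(x_{k+1}) = g'(x_{k+1})$ a.e. Since $y_{k+1}$ is Lipschitz (inductive hypothesis) and $f$ is Lipschitz, $f \circ y_{k+1}$ is Lipschitz, and thus $y_{k+2}$ is Lipschitz on $I$; moreover by the chain rule, for a.e.\ $x \in I$,
$$
y_{k+2}'(x) = g'(x_{k+1}) + f'(y_{k+1}(x))\, y_{k+1}'(x),
$$
again with $f'(y_{k+1})y_{k+1}' = 0$ wherever $y_{k+1}' = 0$. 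Now substitute the inductive formula for $y_{k+1}'(x)$ and distribute the factor $f'(y_{k+1})$ over its terms: the term $g'(x_k)$ becomes $g'(x_k) f'(y_{k+1})$, the term $g'(x_{k-1}) f'(y_k)$ becomes $g'(x_{k-1}) \prod_{j=k}^{k+1} f'(y_j)$, and so on down to $y_0'(x)\prod_{j=0}^{k} f'(y_j)$ becoming $y_0'(x)\prod_{j=0}^{k+1} f'(y_j)$. Together with the new leading term $g'(x_{k+1})$, this is exactly the claimed formula for $y_{k+2}'(x)$, which closes the induction.

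The only genuinely delicate point — and the one I would be most careful about — is the handling of the a.e.\ exceptional sets and the zero-derivative convention when composing. The chain rule $(\vf \circ \psi)'(x) = \vf'(\psi(x))\psi'(x)$ for absolutely continuous $\vf,\psi$ holds only almost everywhere, and one must check that the union over the finitely many steps $j = 0,\ldots,k+1$ of the relevant null sets is still null (it is, being a finite union), so that all formulae hold simultaneously a.e.\ on $I$. Since in our setting each $f'(y_j) \geq \ve^\rho > 0$ a.e.\ by assumption (A2) (as noted in Subsection \ref{lipschitz}), the zero-derivative convention is in fact vacuous off a null set — $f'$ never vanishes where it is defined — which streamlines the bookkeeping; but stating the lemma with the convention keeps it robust. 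No other obstacle arises: the argument is a routine, if notationally heavy, telescoping of the product rule.
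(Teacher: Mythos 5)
Your proof is correct and follows essentially the same route as the paper's: an induction on $k$ using the Lipschitz chain rule, with the base case and inductive step computed exactly as the author does. The paper's proof is terser (it displays only $k=0,1$ and says "the general formula follows by induction"), while you additionally spell out the a.e.\ bookkeeping and the zero-derivative convention, but the underlying argument is identical.
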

\begin{proof}Note that $y_1(x)=g(x)+f(y_0(x))$; thus $y_1'(x)=g'(x)+f'(y_0)y_0'(x)$.  Next, $y_2(x)=g(x_1)+f(y_1(x))$, and therefore
$y_2'(x)=g'(x_1)+f'(y_1)y_1'(x)=g'(x_1)+f'(y_1)(g'(x)+f'(y_0)y_0'(x))$. The general formula follows by induction.
\end{proof}

As a direct consequence of the previous lemma we get the following result, where we assume that we for each $x\in I$ have various controls on products of the form
$$
\prod_{j=\ell}^k f'(y_j) 
$$
where, as usual, $y_j=\pi_2(T^j(x,y)$.
\begin{lemma}\label{SF_L3}

Below $I\subset \T$ and $Y\subset \T$ are assumed to be intervals. 

\begin{enumerate}

\item Assume that $0<\delta<1$ and $k\geq 1$. Assume further that we for all $x\in I$ have the estimate
$\displaystyle
\prod_{j=\ell}^k f'(y_j)<\delta^{k-l+1} \text{ on } Y \text{ for all } \ell\in [0,k].
$
If $y_0:I\to Y$ is Lipschitz continuous and increasing, then 
$$
y_{k+1}'(x)<\frac{\kappa}{1-\delta}+ \delta^{k+1}y_0'(x) \text{ on } I.
$$

\item Assume that $\Delta>0$. Moreover, assume that for all $x\in I$ we have 
$\displaystyle
\prod_{j=0}^k f'(y_j)>\Delta 
$
for all $y$ such that $y_{k+1}\in Y$ (and for which the derivatives exist). If $y_0:I\to \T$ is Lipschitz continuous and increasing, and if 
$y_{k+1}(I)\subset Y$, then
$$
y_{k+1}'(x)> \Delta y_0'(x) \text{ on } I.
$$

\item For all increasing Lipschitz functions $y_0:\T\to\T$ we always have the (trivial) upper bound
$$
y_{k+1}'(x)<\frac{\kappa\ve^{-\rho k}}{1-\ve^\rho}+\ve^{-\rho(k+1)}y_0'(x) \text{ on } I, k\geq 0.
$$ 
\item For all increasing Lipschitz functions $y_0:\T\to\T$ always have the (trivial) lower bound $y_{k+1}'(x)>1/\kappa$ on $I$, $k\geq 0$.

\end{enumerate}

\end{lemma}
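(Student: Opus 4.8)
The statement to prove is Lemma \ref{SF_L3}, which has four parts, all of which are direct consequences of the explicit derivative formula in Lemma \ref{SF_L2}. Let me sketch the proof plan.

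The plan is to feed the hypotheses of each part into the formula from Lemma \ref{SF_L2}, namely
$$
y_{k+1}'(x)=g'(x_k)+g'(x_{k-1})f'(y_k)+\ldots+g'(x)\prod_{j=1}^k f'(y_j)+y_0'(x)\prod_{j=0}^k f'(y_j),
$$
and estimate term by term. For part (1), I would bound each $g'(x_i)$ by $\kappa$ (from $(A1)(\kappa)$), and use the hypothesis $\prod_{j=\ell}^k f'(y_j)<\delta^{k-\ell+1}$ to bound the $i$-th term (counting from the right among the $g'$-terms) by $\kappa\delta^{i}$; summing the geometric series $\sum_{i\geq 0}\kappa\delta^i=\kappa/(1-\delta)$ handles the first $k+1$ terms, and the last term is $y_0'(x)\prod_{j=0}^k f'(y_j)<\delta^{k+1}y_0'(x)$. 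Since $y_0$ is increasing, $y_0'(x)\geq 0$ a.e., so there is no sign issue. That gives the claimed bound. One subtlety worth a remark: the hypothesis on the products is stated "on $Y$", and since $y_0$ maps $I$ into $Y$, the points $y_j=\pi_2(T^j(x,y_0(x)))$ are exactly the relevant iterates, so the product estimates apply along the curve.

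For part (2), I would simply discard all the (nonnegative) $g'$-terms from the formula — this is where the assumption that $y_0$ is increasing is used, guaranteeing $y_0'(x)\geq 0$ — and retain only $y_0'(x)\prod_{j=0}^k f'(y_j)$; the hypothesis $y_{k+1}(I)\subset Y$ together with the lower bound $\prod_{j=0}^k f'(y_j)>\Delta$ valid whenever $y_{k+1}\in Y$ then gives $y_{k+1}'(x)>\Delta y_0'(x)$ on $I$. Part (3) is the specialization of part (1) (or rather, a direct reading of the formula) using the global bound $f'(y)<\ve^{-\rho}$ from $(A2)(\ve,\rho)$: the $i$-th term is at most $\kappa\ve^{-\rho i}$, the geometric sum is $\kappa\ve^{-\rho k}\sum_{i\geq 0}\ve^{\rho i}=\kappa\ve^{-\rho k}/(1-\ve^\rho)$ after reindexing (the largest term corresponds to $i=k$), and the last term is $\ve^{-\rho(k+1)}y_0'(x)$. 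Part (4) is immediate: in the formula, drop every term except the first, $g'(x_k)>1/\kappa$, since all other terms are nonnegative (again using that $y_0$ is increasing so $y_0'\geq 0$, and that $f',g'>0$).

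I do not anticipate any real obstacle here — this lemma is purely computational bookkeeping built on Lemma \ref{SF_L2}. The only points requiring a word of care are: (i) everywhere the product hypotheses are applied along the actual curve $x\mapsto(x,y_0(x))$, so one should note $y_j(x)=\pi_2(T^j(x,y_0(x)))$ lands where the hypotheses apply (in $Y$ for part (1), and the $y_{k+1}\in Y$ condition for part (2)); (ii) the "a.e." caveat — all derivatives exist for a.e. $x\in I$ because $g$, $f$ and $y_0$ are Lipschitz, hence so are all the $y_{k+1}$, and the stated inequalities are understood in the "on $I$" sense of Subsection \ref{lipschitz}; and (iii) the sign of $y_0'$, nonnegative by monotonicity, which is what lets us both drop terms (parts 2,4) and keep the $y_0'$-term with a clean inequality (parts 1,3). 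I would present parts (1) and (3) together (since (3) is the $\delta=\ve^\rho$ case with the global $f'$-bound, up to the reindexing of which term is largest) and parts (2) and (4) together, each in two or three lines.
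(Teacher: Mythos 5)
Your proof is correct and follows the same route as the paper: all four parts are read directly off the derivative formula of Lemma \ref{SF_L2}, bounding $g'$ by $\kappa$ (or $1/\kappa$ from below), summing the geometric series of $f'$-products, and using $y_0'\geq 0$ to justify dropping nonnegative terms. Your remarks about the a.e.\ interpretation, the points $y_j$ landing where the product hypotheses apply, and the sign of $y_0'$ are all correct and make explicit what the paper leaves implicit.
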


\begin{proof}
(1) From the formula in Lemma \ref{SF_L2}, combined with the assumed estimates and the general assumption that $g'(x)<\kappa$ for a.e. $x$, we get
$$
y_{k+1}'<\kappa(1+\delta+\delta^2+\ldots +\delta^k)+\delta^{k+1}y_0'<\frac{\kappa}{1-\delta}+ \delta^{k+1}y_0'.
$$
(2) Since all the terms in the formula in Lemma \ref{SF_L2} are positive, 
we get, making use of the assumption,
$$
y_{k+1}'>\Delta ~y_0'.
$$
(3) Follows easily from the formula in Lemma \ref{SF_L2} since we have the global upper bounds $f'(y)<\ve^{-\rho}$ and $g'(x)<\kappa$. 

(4) Again, since all the terms in the formula are non-negative, and since $g'(x)>1/\kappa$ for a.e. $x$, the statement is obvious.

\end{proof}

We end this section with an elementary result which we will use later to show that certain sets are "well" inside other sets. We recall the definition of the set $A''$ in
(\ref{app}).
\begin{lemma}\label{SF_L4}
Assume that $I=[s,t]\subset \T$ is an interval and that $\vf:I\to \T$ is Lipschitz continuous, strictly increasing, and such that $\vf'(x)<\Delta$ on $I$.
Assume further that $\vf(s),\vf(t)\notin A''$.
Then $\dist(\vf^{-1}(A),\partial I)>\ve/(2\Delta)$.
\end{lemma}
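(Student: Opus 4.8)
The plan is to reduce everything to a one‑dimensional integral estimate via lifts. First I would observe that, since $A\subset A''$ and $\vf(s),\vf(t)\notin A''$, any point $x_0\in\vf^{-1}(A)$ lies strictly between $s$ and $t$; in particular, passing to a lift $\widetilde I=[\widetilde s,\widetilde t]$ of $I$ (with $\widetilde t-\widetilde s\le 1$), the corresponding lift $\widetilde x_0$ satisfies $\widetilde s<\widetilde x_0<\widetilde t$, and I may take a strictly increasing, absolutely continuous lift $\tvp$ of $\vf$ with $\tvp'=\vf'<\Delta$ a.e. (this is exactly the kind of bookkeeping recalled in Section \ref{lipschitz}).

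Next, the key point is to bound from below the distance travelled by $\vf$ from $s$ to $x_0$. Since $\vf(x_0)\in A$ and $\vf(s)\notin A''$, the definition of $A''$ gives $d(\vf(x_0),\vf(s))\ge\dist(\vf(s),A)>\ve/2$. Because $\tvp$ is increasing and $\widetilde x_0>\widetilde s$, the circle distance is dominated by the increment of the lift, so
\[
\frac{\ve}{2} < d(\vf(x_0),\vf(s)) \le \tvp(\widetilde x_0)-\tvp(\widetilde s)=\int_{\widetilde s}^{\widetilde x_0}\tvp'(x)\,dx<\Delta(\widetilde x_0-\widetilde s),
\]
whence $\widetilde x_0-\widetilde s>\ve/(2\Delta)$. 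Running the identical argument on $[\widetilde x_0,\widetilde t]$, using $\vf(t)\notin A''$, yields $\widetilde t-\widetilde x_0>\ve/(2\Delta)$.

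Finally I would convert these two one‑sided gaps into the required circle‑distance bound. Since $\widetilde t-\widetilde s\le 1$ we have $1-(\widetilde x_0-\widetilde s)\ge(\widetilde t-\widetilde s)-(\widetilde x_0-\widetilde s)=\widetilde t-\widetilde x_0$, hence
\[
d(x_0,s)=\min\bigl(\widetilde x_0-\widetilde s,\ 1-(\widetilde x_0-\widetilde s)\bigr)\ge\min(\widetilde x_0-\widetilde s,\ \widetilde t-\widetilde x_0)>\frac{\ve}{2\Delta},
\]
and symmetrically $d(x_0,t)>\ve/(2\Delta)$. To upgrade "$>\ve/(2\Delta)$ for every $x_0\in\vf^{-1}(A)$" to a strict bound on the infimum defining $\dist(\vf^{-1}(A),\partial I)$, I would simply rerun the whole argument for all points of the compact set $K=\overline{\vf^{-1}(A)}=\vf^{-1}(\overline A)\cap I$ (replacing $\dist(\vf(s),A)$ by $\dist(\vf(s),\overline A)$, which is the same number), so that $\dist(\vf^{-1}(A),\partial I)=\dist(K,\partial I)$ is attained at a point of $K$ and therefore genuinely exceeds $\ve/(2\Delta)$.

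The only mild subtlety — which I would call the main, but minor, obstacle — is the possible winding of $I$ around $\T$: the increment $\widetilde x_0-\widetilde s$ of the lift need not equal the circle distance $d(x_0,s)$ when $|I|$ is close to $1$. This is exactly what the inequality $1-(\widetilde x_0-\widetilde s)\ge\widetilde t-\widetilde x_0$ takes care of, by playing the gap on one side of $x_0$ against the gap on the other; with that in hand the rest is routine.
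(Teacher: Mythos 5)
Your proof is correct and follows essentially the same route as the paper: pass to a lift, use the definition of $A''$ to get a lower bound $\ve/2$ on the increment $\tvp(\widetilde{x}_0)-\tvp(\widetilde{s})$, then divide by the derivative bound $\Delta$. The paper's version picks the first entry time $s'$ into $\overline{A}$ (rather than an arbitrary $x_0\in\vf^{-1}(A)$, handling the infimum at the same stroke) and tacitly identifies lift-increments with circle distances; your additional step playing the two one-sided gaps against each other via $\widetilde{t}-\widetilde{s}\le 1$ makes that identification explicit, which is a small but legitimate tightening of the write-up.
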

\begin{proof}Let $\tvp:\R\to\R$ be a lift of $\vf$, and assume that $I=[s,t]\subset \R$. Let $s<s'$ be the smallest $s'$ such that
$\pi(\tvp(s'))\in \overline{A}$ (we assume that $\vf^{-1}(A)\neq \emptyset$). From the assumption that $\vf(s)\notin A''$, it follows that $\tvp(s')-\tvp(s)> \ve/2$. Moreover, 
$\tvp(s')-\tvp(s)=\int_s^{s'}\tvp'(x)dx\leq \Delta(s'-s)$. Thus, $\dist(\varphi^{-1}(A),s)>\ve/(2\Delta)$. In the same way we can prove that
$\dist(\varphi^{-1}(A),t)>\ve/(2\Delta)$.

\end{proof}

\end{section}

\begin{section}{Proof of Theorem 1 (assuming good control on the dynamics of $T$)}
In this section we prove Theorem 1, assuming that we have very good control of the dynamics of the map $T$. The estimates we will use are
contained in the next proposition, Proposition \ref{pt_prop}. The proof of this proposition is the content of sections 5 and 6 below.  

Before stating the proposition we recall the notation: given $(x,y)\in \T^2$ we define $(x_n,y_n)$ by $$(x_n,y_n)=T^n(x,y), ~n\in\Z.$$
We also recall that we have fixed $\omega$ satisfying the Diophantine condition (\ref{DC}) for some $\gamma>0$, and that we have assumed that 
$g$ satisfies condition $(A1)(\kappa)$ and $f$ satisfies $(A2)(\ve,\rho)$.  
Finally, we recall that $A\subset A'$ (see the definition of $A'$ in (\ref{ap})).

\begin{prop}\label{pt_prop}
There exists an $\ve_1=\ve_1(\gamma,\kappa,\rho)>0$ such that for every $0<\ve<\ve_1$ there is a measurable set $X\subset \T$, $|X|>0$, such that
the following holds for each $x\in X$:
\begin{enumerate}
\item For a.e. $y\in \T\setminus A$ we have 
$$
\prod_{j=1}^{k}f'(y_j)<\ve^{k/2} \text{ for all } k\geq 1. 
$$

\item For all $y\in \T\setminus A$ we have that whenever $x_j\in X$ ($j\geq 1$), then $y_j\in \T\setminus A'$. Moreover,
$\overline{\{(x_k,y_k)\}_{k\geq 0}}=\T^2$. 

\item If $Y\subset \T$ is an interval such that $(x_k,y_k)\in X\times A'$ for all $y\in Y$, any $k\geq 1$, then
$$
\prod_{j=0}^{k-1}f'(y_j)>\ve^{-k/2} \text{ for a.e. } y\in Y.
$$
\end{enumerate}
\end{prop}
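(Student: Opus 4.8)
The plan is to prove Proposition~\ref{pt_prop} by a multi-scale inductive construction in the spirit of \cite{B1,B2} and of Young's non-resonant argument in \cite{Y2}, organised around the continued-fraction denominators $q_n$ of $\omega$. At scale $n$ one propagates two objects: a finite family of strictly increasing Lipschitz ``candidate'' curves $x\mapsto(x,\vf(x))$ lying in the thin strip $S$, approximating the eventual attracting graph, together with contraction bounds of the form $\prod_j f'(y_j)<\ve^{(\cdot)/2}$ along them; and a shrinking family of dangerous base-sets $I_0\supset\Omega_1\supset\Omega_2\supset\cdots$, where $\Omega_n$ is essentially the set of $x$ whose forward orbit comes within $\Vert q_n\omega\Vert$ of $I_0+\omega$ --- the only place the fibre can be driven into $A'$ (recall \eqref{I0} and Lemma~\ref{P_LA}). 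The set $X$ is the set of base points surviving every stage, and $|X|>0$ because at stage $n$ one discards only a set of measure comparable to $|\Omega_n|$ times the (controlled) number of returns, which is summable since $|I_0|\sim\ve$ with $\ve$ as small as we please.

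\textbf{Base case and the non-resonant estimate (Section~5).} Because $\ve$ is tiny, assumption $(A2)(\ve,\rho)$ lets the mechanism start at once: by Lemmas~\ref{P_L1} and~\ref{P_L2} an orbit starting in $\T\times(\T\setminus A)$ stays in $S$, and whenever its base coordinate lies outside $\inn(I_0+\omega)$ its fibre lies in $B$, where $f'<\ve$ (Lemma~\ref{P_LA}). Since the orbit of an irrational rotation visits an $\ve$-interval only a fraction $\sim\ve$ of the time --- with a quantitative error controlled, for all moderately large times, by \eqref{DC} --- the expansion ($\le\ve^{-\rho}$ per step) accumulated on visits to the $\ve$-neighbourhood of $I_0+\omega$ is overwhelmed by the contraction ($<\ve$ per step) accumulated in $B$, so $\prod_{j=1}^k f'(y_j)<\ve^{k/2}$ for all $k$ along curves that have so far avoided being pushed into $A'$ (Lemmas~\ref{SF_L1}, \ref{SF_L3}). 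This already yields (1) and (3) away from the dangerous returns and supplies the seed curve.

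\textbf{The resonant excursion, and how (1)--(3) follow (Sections~5--6).} The heart of the induction is one visit of the base orbit to $I_0+\omega$. Since $g$ has degree $2$, by Lemma~\ref{P_LC} the set $I_0$ splits into two components $I_0^1,I_0^2$, so the candidate curve may be pushed into $A'$ along up to two sub-arcs --- the resonance. Using the topological package of Section~\ref{stf} --- the overtaking relation and Lemmas~\ref{Tf_0}, \ref{Tf_1}, \ref{Tf_2}, \ref{Tf_3}, together with Lemma~\ref{P_LB} --- one shows the image curve stays strictly increasing, that $\vf^{-1}(A')$ consists of one or two genuine intervals mapped \emph{onto} $A'$, and that the overtaking pattern relative to the reference level $\beta$ is exactly reproduced by one step of $T$. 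On the sub-arc entering $A$ the derivative exceeds $\ve^{-1}$, so the curve is stretched; but $|f(\T\setminus A)|<\ve$ means it is spread over nearly all fibre values, then most of it falls outside $A$, hence (Lemma~\ref{P_L1}) back into $S$, and resumes contracting, with Lemma~\ref{SF_L4} guaranteeing the next dangerous returns land well inside the relevant intervals so that the scheme closes at scale $n+1$. Quantitatively the at-most-$\ve^{-\rho}$-type gain (Lemma~\ref{SF_L3}(3)) over the at most two resonant steps is dwarfed by the contraction between consecutive returns; summing over excursions gives $\prod_{j=1}^k f'(y_j)<\ve^{k/2}$ for all $k$, i.e.~(1), and a dual bookkeeping along backward orbits --- using Lemma~\ref{P_L4} to see that being in $A'$ over $X$ forces having just emerged from $A$ --- gives the expansion estimate (3). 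For (2), the clause ``$x_j\in X\Rightarrow y_j\notin A'$'' is built into the definition of $X$ via Lemma~\ref{P_LA}, and density of the orbit follows since the expansion in $A$ spreads any fibre arc reaching $A$ over essentially all of $\T$, so the closed, $T$-invariant orbit closure, projecting onto all of $\T$, must contain a full fibre $\{x_0\}\times\T$ and hence all of $\T^2$.

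\textbf{Main obstacle.} The genuinely delicate point --- and the reason Sections~5--6 are long --- is the resonant configuration in which, within one near-period $\sim q_n$, the orbit visits \emph{both} $I_0^1+\omega$ and $I_0^2+\omega$, so the two branches of $\vf^{-1}(A')$ are simultaneously active; one must show they cannot conspire to destroy hyperbolicity while keeping the discarded measure summable. This is exactly where \cite{B2,Ja,Y2} use parameter exclusion; here it has to be handled purely through the monotonicity/degree-$2$ topology (the overtaking lemmas) together with the Diophantine spacing, and making the shrinking neighbourhoods $\Omega_n$ and the candidate curves close \emph{simultaneously} at every scale is the technical core of the argument.
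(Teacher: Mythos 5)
Your outline has the right spirit---a multi-scale induction with nested bad sets, topological control via the overtaking lemmas, and Diophantine spacing---but two of your specific claims diverge from what actually makes the paper's proof close, and one of them is a genuine logical gap rather than a stylistic difference.

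\textbf{The bad sets are defined dynamically, not arithmetically.} You define $\Omega_n$ as ``essentially the set of $x$ whose forward orbit comes within $\Vert q_n\omega\Vert$ of $I_0+\omega$,'' i.e.\ an arithmetic neighborhood organised by the continued-fraction denominators. The paper instead sets $I_{n+1}=\pi_1(T^{M_n+K_n}(\Gamma_n))\cap(\T\times A')-K_n\omega$, where $\Gamma_n=(J_n-M_n\omega)\times\{\beta\}$ is a single probe (the constant curve at the reference level $\beta$, not a ``finite family of candidate curves''); $J_{n+1}$ is then $I_{n+1}$ (non-resonant case) or $I_{n+1}^1\cup(I_{n+1}^2-\nu_{n+1}\omega)$ (resonant case). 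Without this dynamical definition the key estimate $(\ref{ind_eq21})$, that $|J_{n+1}^\ell|$ lies between $\ve^{2\rho K_n}$ and $\ve^{K_n/2}$, is unavailable; it is precisely this super-exponential shrinking of the bad intervals against the polynomially growing recovery times $M_n\approx K_n^2$ that makes $|X|>0$. The $\|q_n\omega\|$-neighborhoods of $I_0+\omega$ have the wrong size to carry this bookkeeping, and ``$|I_0|\sim\ve$'' by itself does not give summability of the excised measure. Moreover, the ``expansion is overwhelmed by contraction because the orbit visits an $\ve$-interval only a fraction $\sim\ve$ of the time'' heuristic is not what the paper proves; the conditions $(2)_n$ and $(3)_n$ are quantitative return-time estimates relative to the dynamically defined $J_n$, and the inequality $K_n/M_n<2/K_n$ (super-exponential separation of scales) is what wins, not an averaging argument.

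\textbf{The density argument for (2) has a gap.} You write that the closed $T$-invariant orbit closure, projecting onto all of $\T$, ``must contain a full fibre $\{x_0\}\times\T$ and hence all of $\T^2$.'' A closed invariant set projecting onto $\T$ need not contain any full fibre, so this step does not follow. The paper's argument is different: condition $(C)_n$ (a quantitative consequence of the derivative estimates $\ve^{-K_n/2}<\vf_n'<\ve^{-\rho K_n}$ in $(4)_n$) shows that the projection to $\T$ of the orbit, when $\ve^{K_n/4}$-dense in $J_n^\ell-M_n\omega$, yields a $\pi_2$-image nearly dense in $A'$ at time $M_{n-1}+K_{n-1}$. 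One then uses compactness to extract a point $x^*$ with $\dist(x^*,J_{n_i}+K_{n_i}\omega)\to 0$, and Lemma~\ref{M_L4} (with the auxiliary open sets $\widehat J_j$) to show $x^*\in\bigcap X_n$; this gives $\overline{\mathcal O}\supset\{x^*\}\times A'$, not a full fibre. Density of $\overline{\mathcal O}$ is completed via the contraction estimate, which forces $|\pi_2(T^k(x^*,\T\setminus A))|\to 0$, hence $\overline{\bigcup_k T^k(\{x^*\}\times A)}=\T^2$. Your sketch misses both the $(C)_n$ mechanism and the need for $x^*\in\bigcap X_n$ (the only reason the $\widehat J_j$ are introduced at all).

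The rest of your picture---the role of the overtaking relation and Lemmas~\ref{Tf_1}--\ref{Tf_3} in showing $\vf_n^{-1}(A')$ is one or two genuine intervals mapped onto $A'$, the two-stage shadowing in the resonant case, the backward bookkeeping for (3) via Lemma~\ref{P_L4}---is essentially the paper's and is sound as a sketch. But the two points above are where the proof actually lives, and the second one is not just vague but incorrect as stated.
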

\begin{rem}We could have replaced condition (3) with a condition similar to that of (1), but instead considering backward iterations.
However, since we do not really need such a condition we have chosen to work with the above form.
\end{rem}

\begin{proof}[Proof of Theorem 1] 
We shall first construct two measurable functions $u:\T\to\T$ and $s:\T\to\T$, whose graphs are a.e. $T$-invariant. 
Moreover, we will see that for a.e. $x\in\T$ we have
$$
d(y_k,u(x_k))\to 0 \text{ (exponentially fast) as } k\to \infty \text{ for all } y\neq s(x).
$$
($u,s$ are the equivalent of the (projective) Oseledets' directions in the linear cocycle case. See \cite{O}.)

Let $$
X^*=\{x\in \T: x+j\omega\in X \text{ for infinitely many } j\geq 0 \text{ and infinitely many } j<0 \}.
$$
Since $|X|>0$ it follows immediately from the Poincar\'e recurrence theorem that $X^*$ has full (Lebesgue) measure; and
obviously $X^*$ is invariant under rotation by $\omega$. 
We now construct $u(x),s(x)$ for each $x\in X^*$.

Fix $x\in X^*$ and let $\xi=x_p$, where $p\geq 0$ is the smallest integer such that $x+p\omega\in X$. We shall now focus on iterates of $(\xi,y)$ for $y\in \T$.
Let $0=j_0<j_1<j_2<\ldots$ be the times when $\xi_{j_i}\in X$.

\noindent \emph{Claim 1}: If $y$ is such that $y_{j_i}\in A'$ ($i\geq 1$), then $y_{j_l}\in A$ for all $0\leq l<i$.  

Indeed, if $y_{j_l}\notin A'$ for some $0\leq l<j$, then, since $\xi_{j_l}\in X$, applying Proposition \ref{pt_prop}(2) to $(\xi_{j_l},y_{j_l})$ would give us that
$y_{j_i}\notin A'$.

Define the closed intervals $S_i=\pi_2(T^{-j_i}(\xi_{j_i},A'))$, $i\geq 1$. 
From Claim 1 it follows that $S_{i+1}\subset S_i\subset A$ for all $i\geq 1$. Moreover, from Proposition \ref{pt_prop}(3) combined with
Lemma \ref{SF_L1} it follows that $|S_i|\to 0$ as $i\to\infty$. Let $s(\xi)$ be the unique point in $\bigcap_{i\geq 1}S_i$. 
We now define $s(x)$ by $s(x)=\pi_2(T^{-p}(\xi,s(\xi))$.
Note that, since $s(\xi)\in S_i$ for each $i\geq 1$ (by construction), we have 
$T^{p+j_i}(x,s(x))=T^{j_i}(\xi,s(\xi))\in X\times A' \text{ for all } i\geq 1$. Using Claim 1 we conclude that we in fact have  
\begin{equation}\label{pp_eq1}
T^{p+j_i}(x,s(x))=T^{j_i}(\xi,s(\xi))\in X\times A \text{ for all } i\geq 1. 
\end{equation}

To construct $u(x)$ we do as follows. Let $\theta=x_{-q}$, where $q\geq 0$ is the smallest integer such that $x-q\omega\in X$. 
Let $0>k_1>k_2>\ldots$ be the (negative) times when $\theta_{k_i}\in X$, and define the closed intervals 
$U_i=\pi_2(T^{-k_i}(\theta_{k_i},\T\setminus A))$, $i=1,2,\ldots$. 
Proposition \ref{pt_prop}(2) implies that $U_{i+1}\subset U_i \subset \T\setminus A'$. Furthermore, Proposition \ref{pt_prop}(1), and Lemma \ref{SF_L1},
give that $|U_i|\to 0$ as $i\to \infty$. Let $u(\theta)$ be the unique point in $\bigcap_{i\geq 1}U_i$, and define $u(x)=\pi_2(T^{q}(\theta,u(\theta))$. 
Since $(\theta,u(\theta))\in X\times (\T\setminus A)$, and since $x=\theta+q\omega$,
it follows from Proposition  \ref{pt_prop}(2) that 
\begin{equation}\label{pp_eq2}
T^{p+j_i}(x,u(x))=T^{j_i+q+p}(\theta,u(\theta))\in X\times (\T\setminus A') \text{ for each } i\geq 1. 
\end{equation}

By combining equations (\ref{pp_eq1}) and  (\ref{pp_eq2}) it follows that $s(x)\neq u(x)$ for all $x\in X^*$. 
Moreover, from the above construction of $u(x)$ and $s(x)$
it is clear that $T(x,u(x))=(x+\omega,u(x+\omega))$ and  $T(x,s(x))=(x+\omega,s(x+\omega))$ for all $x\in X^*$. 

Next we show that 
$$
d(y_n,u(x_n))\to 0 \text{ as } n\to\infty \text{ for all } x\in X^*, y\neq s(x).
$$
To do this we first note that Proposition \ref{pt_prop}(1) and Lemma \ref{SF_L1} imply that 
\begin{equation}\label{pp_eq3}
|\pi_2(T^n(z,\T\setminus A))|\to 0 \text{ (exponentially fast) as } n\to\infty  \text{ for all } z\in X.
\end{equation}
Fix $x\in X^*$, and let $p,\xi,j_i,S_i$ be defined as above. Take $y\neq s(x)$. If $y_{p+j_i}\in \T\setminus A'$ for some $i$, then it follows from
(\ref{pp_eq2}) and (\ref{pp_eq3}) that $d(y_n,u(x_n))\to 0$  as  $n\to\infty$. Assume now that $y_{p+j_i}\in A'$ for all $i\geq 1$.
Then, by definition, $y_p\in S_i$ for all $i\geq 1$. But this means that $y_p\in\bigcap_{i\geq1} S_i$, i.e., $y_p=s(\xi)$; hence $y=s(x)$, 
contradicting the assumption.

\medskip
It remains to show that $T$ is minimal. By Proposition $\ref{pt_prop}(2)$ we have
\begin{equation}\label{pp_eq0}
\overline{(x_k,y_k)_{k\geq 0}}=\T^2 \text{ for all } x\in X, y\in \T\setminus A.
\end{equation}
The plan is to use Proposition \ref{M_P} in Section 7 with $w^+=u$ and $w^-=s$ 
(note that all the conditions needed for applying the proposition are fulfilled).  
From that proposition we get that $T$ has exactly two invariant and ergodic Borel probability measures, $\mu^s,\mu^u$. Moreover, $\mu^u$ and $\mu^s$ are
the push-forward of the Lebesgue measure on $\T$ by the maps $x\mapsto (x,u(x))$ and $x\mapsto (x,s(x))$, respectively. Proposition \ref{M_P} also tells us that
$\text{supp }\mu^u$ is a minimal set. From (\ref{pp_eq0}) and (\ref{pp_eq2}) we conclude that $\overline{\bigcup_{k=0}^\infty\{(x+k\omega,u(x+k\omega))\}}=\T^2$ for
all $x\in X^*$. Thus $\text{supp }\mu^u=\T^2$, i.e., $\T^2$ is a minimal set.
\end{proof}

The remaining part of the paper is now devoted to the proof of Proposition \ref{M_P}. The set $X\subset \T$ in 
that proposition will be constructed inductively. We will obtain sets
$$
X_0\supset X_1\supset \ldots \supset X_n\supset \ldots \supset X
$$ 
where we get control on the iteration of points $(x,y)$, $x\in X_n$, for times $T_n\to\infty$. 
The Diophantine condition $(\ref{DC})$ posed on $\omega$ is crucial for the analysis; it is used to obtain stopping times.

\end{section}

\begin{section}{Base case of the construction}
We are now ready to take the fist steps towards the proof of Proposition \ref{pt_prop}. The proof of this proposition is 
the heart of our construction, which is based on an inductive argument. 

In this section we state and prove the base case for the inductive construction. Several of the geometric ideas used in the general 
inductive step are visible already here. The main approach is similar to the one we used in \cite{B1}, but the geometry is very different (due
to the fact that the map $T$ is not homotopic to the identity). This is why, in particular, we do not need much regularity assumptions on $f$ and $g$.

\subsection{Control on iterations on first scale}
From the basic estimates in Section \ref{BPT}, we immediately get good control on the iterates of points $(x,y)\in \T\times (\T\setminus A)$ up to time
$N(x;I_0)$ (recall that this is the first entry time of the point $x$ to $I_0$ under translation by $\omega$). 
We summarize them in the next lemma.

\begin{lemma}\label{FS_L1} 
The following holds:
\begin{enumerate}
\item Assume that $x\in \T$, and let $N=N(x;I_0)$. For all $y\in \T\setminus A$ we have
$y_j\in \T\setminus A'$ for all $j\in [1,N]$ 
and $ (x_j,y_j)\in S$ for all $j\in [1,N+1]$. In particular, for all $1\leq \ell\leq k\leq N$ we have
$$
\prod_{j=\ell}^{k}f'(y_j)<\ve^{(k-l+1)} \text{ on } \T\setminus A.
$$


\item Given $k\geq1$, assume that $x$ is such that $x\in \left(\T\setminus \bigcup_{j=0}^{k-1} (I_0-j\omega)\right)$ and $y$ is such that $y_k\in A'$. 
Then 
$$
y_j\in A, ~j=0,\ldots, k-1, 
$$
and thus 
$$
\prod_{j=0}^{k-1}f'(y_j)>\ve^{-k} 
$$
whenever the derivatives exist.
\end{enumerate} 
\end{lemma}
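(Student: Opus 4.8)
The two parts are almost immediate consequences of the basic facts already assembled in Sections~\ref{BPT} and~\ref{formulae}, so the plan is essentially bookkeeping: chain together Lemmas~\ref{P_L1}, \ref{P_L2}, \ref{P_LA} and~\ref{P_L4} with the elementary product estimate of Lemma~\ref{SF_L1}, keeping careful track of which iterates are guaranteed to avoid $I_0+\omega$ (equivalently, $I_0$ shifted backwards). No new geometric idea is needed here; the point of the lemma is just to record the ``free'' control one gets before the orbit first meets the resonant set $I_0$.

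For part~(1), fix $x$ and set $N=N(x;I_0)$, so that $x+j\omega\notin I_0$ for $j=0,1,\ldots,N-1$, i.e.\ $x_j=x+j\omega\notin I_0+\omega$ for $j=1,\ldots,N$. Take $y\in\T\setminus A$. First I would show by induction on $j\in[1,N+1]$ that $(x_j,y_j)\in S$: the base case $j=1$ is Lemma~\ref{P_L1} (since $y\notin A$), and the inductive step uses Lemma~\ref{P_L2}, which applies because $x_j\notin\inn(I_0+\omega)$ for $j\le N$. Next, Lemma~\ref{P_LA}(2) applied to $(x_j,y_j)\in S$ with $x_j\notin\inn(I_0+\omega)$ gives $y_j\in\overline{\T\setminus A'}\subset B$ for $j=1,\ldots,N$; in particular $y_j\in\T\setminus A'\subset B$, so by assumption (A2) we have $f'(y_j)<\ve$ for a.e.\ such $y_j$, i.e.\ $f'(y_j)<\ve$ on $\T\setminus A$ (as a function of $y$, for a.e.\ $y$) for each $j\in[1,N]$. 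Multiplying these bounds over $j\in[\ell,k]$ with $1\le\ell\le k\le N$ yields $\prod_{j=\ell}^k f'(y_j)<\ve^{\,k-\ell+1}$ on $\T\setminus A$, which is the claimed estimate (and is exactly the hypothesis format of Lemma~\ref{SF_L1}).

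For part~(2), suppose $x\notin\bigcup_{j=0}^{k-1}(I_0-j\omega)$, which means $x_{j+1}=x+(j+1)\omega\notin I_0+\omega$ for $j=0,\ldots,k-1$, and suppose $y_k\in A'$. I would argue by downward induction: I claim $y_{k-1}\in A$. Indeed, apply Lemma~\ref{P_L4} to the pair $(x_{k-1},y_{k-1})$: its hypotheses are that $(x_{k-1})_1=x_k\notin I_0+\omega$ and $(y_{k-1})_1=y_k\in A'$, both of which hold, so the conclusion is $y_{k-1}\in A$. Now $y_{k-1}\in A\subset\T\setminus A'$ would be false — wait, rather $y_{k-1}\in A$ and we want to repeat: since $y_{k-1}\in A$ it is in particular \emph{not} outside $A'$, but to reapply Lemma~\ref{P_L4} at step $k-2$ we only need $y_{k-1}\in A'$, which holds since $A\subset A'$. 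Thus Lemma~\ref{P_L4} applied to $(x_{k-2},y_{k-2})$ (using $x_{k-1}\notin I_0+\omega$ and $y_{k-1}\in A'$) gives $y_{k-2}\in A$, and continuing in this way down to $j=0$ gives $y_j\in A$ for all $j=0,\ldots,k-1$. On $A$ assumption (A2) gives $f'(y_j)>\ve^{-1}$ wherever the derivative exists, and multiplying over $j=0,\ldots,k-1$ gives $\prod_{j=0}^{k-1}f'(y_j)>\ve^{-k}$, as required.

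\textbf{Main obstacle.} There is no real obstacle; the only thing that requires care is the indexing of the translated copies of $I_0$ — making sure that ``$x\notin I_0-j\omega$ for $j<k$'' translates correctly into ``$x_j\notin I_0+\omega$ (equivalently $x_{j}\notin\inn(I_0+\omega)$) for the relevant range of $j$'' — and the off-by-one bookkeeping in the $S$-membership claim (which must be pushed one step further, to $j=N+1$, than the $A'$-avoidance claim, because Lemma~\ref{P_L2} consumes one iterate of the hypothesis $x\notin\inn(I_0+\omega)$). Once the indices are lined up, everything reduces to a direct invocation of the cited lemmas together with the multiplicativity of $\prod f'(y_j)$.
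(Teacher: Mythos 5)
Your argument is correct and follows exactly the paper's route (repeated use of Lemmas \ref{P_L1}, \ref{P_L2}, \ref{P_LA}, \ref{P_L4} together with the derivative bounds on $B$ and $A$). One small logical slip: in part (1) you deduce $y_j\in\overline{\T\setminus A'}$ from Lemma \ref{P_LA}(2) and then write ``in particular $y_j\in\T\setminus A'$'', but the closure is the larger set, so that implication does not hold; to get the claimed membership $y_j\in\T\setminus A'$ you should instead invoke Lemma \ref{P_LA}(1), which applies because $x_j\notin I_0+\omega$ (not merely $\notin\inn(I_0+\omega)$) for $j\in[1,N]$, while Lemma \ref{P_LA}(2) is what gives $y_j\in B$ and hence the derivative bound $f'(y_j)<\ve$.
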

\begin{proof}(1) By Lemma \ref{P_L1} we have $(x_1,y_1)\in S$. The results in the statement of (1) now follows by
a repeated use of Lemmas \ref{P_LA} and \ref{P_L2}, and by recalling that $f'(\eta)<\ve$ for a.e. $\eta\in B$.

(2) From the assumption on $x$ it follows that $x_j=x+j\omega\notin I_0+\omega$ for all $j=1,2,\ldots, k$. Since $A\subset A'$, and since 
$f'(\eta)>1/\ve$ for a.e. $\eta\in A$, the result follows by applying Lemma \ref{P_L4} repeatedly.
\end{proof}

To obtain control of orbits for longer time than $N(x;I_0)$ we have to analyse what can happen after we have enter $I_0$, and what is the probability for
the different scenarios. 
To do this we will send in "probes" through $I_0$ which we then can follow when we iterate points.  However, 
the first step is to check if the two intervals in $I_0$ are what we call "resonant" or not.

\subsection{Definition of the sets $J_0, \widehat{J}_0$ and the integers $M_0,K_0$.}\label{DJMK}
We know (Lemma \ref{P_LC}) that the set $I_0$ consists of two intervals, $I_0^1$ and $I_0^2$, each of length $\sim \ve$. Now it is time to use the 
Diophantine condition on $\omega$. By
applying Lemma \ref{M_L1} we see that 
\begin{equation}\label{Ch_eq00}
I_0^i\cap(I_0^i+k\omega)=\emptyset \quad(i=1,2)
\end{equation}
for all $0<|k|<\sqrt{\gamma/|I_0^i|)}\sim \ve^{-1/2}$; i.e, if we start in $I_0^i$ it takes many steps (under rotation by $\omega$)
before we enter $I_0^i$ again. However, if we, for example, start in $I_ 0^1$, we may very well enter $I_0^2$ in just a few steps. 
We therefore need to check if the intervals $I_0^1,I_0^2$ are what we call "resonant". By this we 
mean that one of them is close to a translation (by few steps) of the other. 
We shall now fix different scales, depending on whether the intervals $I_0^1$ and $I_0^2$ are resonant or not.
We stress that we do not seek for optimal choices, in the sense that the $\ve_0(\gamma,\kappa,\rho)$ in Proposition \ref{pt_prop} is as small as possible.
Any such $\ve_0$ is enough for us.
 
\begin{figure}
\psfrag{i1}{$I_0^1$}
\psfrag{i2}{$I_0^2$}
\psfrag{i3}{$I_0^2-\nu_0\omega$}
\psfrag{j}{$J_0$}
\psfrag{x}{$x$}
\includegraphics[width=5cm]{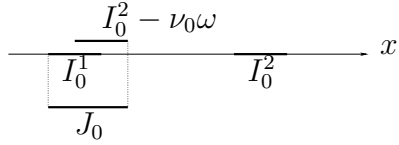}
\caption{Definition of $J_0$ in the resonant case.}\label{fig7}
\end{figure}
{\bf Case R}: If there is a $\nu_0$, $0<\nu_0\leq \ve^{-1/40}$ such that
$$
I_0^1\cap (I_0^2-\nu_0\omega)\neq \emptyset,
$$
(if $I_0^2\cap (I_0^1-\nu_0\omega)\neq \emptyset$ we relabel the intervals) let $J_0$ be the single interval
$$
J_0=I_0^1\cup (I_0^2-\nu_0\omega)
$$
and let 
$$
K_0=[\ve^{-1/20}] \text{ and } M_0=K_0^2=[\ve^{-1/20}]^2.
$$
By applying Lemma \ref{M_L2}, we see that
\begin{equation}\label{Ch_eq0}
\begin{aligned}
(J_0&+k\omega)\cap I_0=\emptyset \text{ for all } k\in[-M_0^2,M_0^2]\setminus\{0,\nu_0\}, \text{ and } \\
J_0&\cap I_0=I_0^1, ~(J_0+\nu_0\omega)\cap I_0=I_0^2.
\end{aligned}
\end{equation}
We note that $0<\nu_0^2\leq K_0$. We also note, by applying the same argument as in (\ref{Ch_eq00}), using the fact that $J_0$ is a single interval and 
$|3J_0|\sim \ve$, that 
\begin{equation}\label{Ch_eq01}
3J_0\cap (3J_0+\omega)=\emptyset \text{ for all } 0<|k|\leq M_0^2.
\end{equation}

{\bf Case NR}: Otherwise, i.e., if
$I_0\cap (I_0+k\omega)=\emptyset \text{ for all } 0<|k|\leq \ve^{-1/40}$,
let $J_0=I_0$ (thus $J_0$ consists of two intervals), and
$$
K_0=[\ve^{-1/160}] \text{ and } M_0=K_0^2=[\ve^{-1/160}]^2.
$$
We note that $M_0^2\leq \ve^{-1/40}$. We thus have
\begin{equation}\label{Ch_eq1}
J_0\cap (J_0+k\omega)=\emptyset \text{ for all } 0<|k|\leq M_0^2.
\end{equation}
Since each interval in $J_0$ is of length $\sim\ve$ we also have 
\begin{equation}\label{Ch_eq02}
3J_0^i\cap (3J_0^i+k\omega)=\emptyset \text{ for all } 0<|k|\leq M_0^2 \text{ and } i=1,2.
\end{equation}

\smallskip
For a (small) technical reason (at one place we would have liked  $\T\setminus J_0$ to be closed) we now define an open set $\widehat{J}_0\supset J_0$;
we just make $J_0$ slightly bigger. In Case R we let $\widehat{J}_0$ be a single open interval containing $J_0$, and so close to $J_0$ that (\ref{Ch_eq01}) 
holds with $J_0$ replaced by $\widehat{J_0}$ (since $J_0$ is closed, (\ref{Ch_eq01}) is an open condition). 
We also assume that $|\widehat{J}_0\setminus J_0|<\ve$. In Case NR we cover each interval $J_0^i$ with an open interval $\widehat{J}_0^i$, close to $J_0^i$,
such that (\ref{Ch_eq02}) holds with $3J_0^i$ replaced by $3\widehat{J}_0^i$, and such that (\ref{Ch_eq1}) holds with $J_0$ replaced by $\widehat{J}_0=\widehat{J}_0^1\cup\widehat{J}_0^2$.
Finally we assume that  $|\widehat{J}^i_0\setminus J_0^i|<\ve$ ($i=1,2$). 

\smallskip
The purpose of the $M_0$ and $K_0$ is the following. 
We want to focus on points $x\in\T$ which do not enter (under translation) the "bad" set $I_0$ too fast (for at least $M_0$ steps). If $y\in\T\setminus A$
we can then use Lemma \ref{FS_L1} to get a good control on the iterates; and $\prod_{k=1}^{N(x;I_0)}f'(y_j)$ is very small. We cannot avoid entering $I_0$, and
once we do that we lose control on the further iterates.  However, we will see that there is only for a very tiny set of $x$ (which we started with) 
for which we have not regained  control after $K_0$ steps from when we entered $I_0$; and once we have control again, we know that we will not enter the 
bad set $I_0$ for a long time ($\gg M_0$). The situation is complicated by the fact that $I_0$ consists of two intervals; thus we may not have recovered
from the first "hit" before we enter $I_0$ again. This is why we introduced the set $J_0$, in order to keep track on when we have "fast returns" (resonances) or not.

From the above definitions we see that $N(x;J_0)\leq N(x;I_0)$ for "many" $x$. Namely, 
\begin{lemma}\label{FS_L3}
If $x\in X_0=\T\setminus \bigcup_{m=-M_0+1}^{K_0-1}(J_0+m\omega)$, then $N(x;J_0)\leq N(x;I_0)$.
\end{lemma}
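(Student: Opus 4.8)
The plan is to split according to the two cases of Subsection~\ref{DJMK}, since $J_0$ is defined differently in each. In \textbf{Case NR} one has $J_0=I_0$, so trivially $N(x;J_0)=N(x;I_0)$ and there is nothing to do (the hypothesis $x\in X_0$ is not even used there). The substance is therefore entirely in \textbf{Case R}, where $J_0=I_0^1\cup(I_0^2-\nu_0\omega)$ for some $0<\nu_0\le\ve^{-1/40}$, after the relabeling of $I_0^1,I_0^2$ performed in that case.

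First I would set $N=N(x;I_0)$, assuming $N<\infty$ (otherwise $N(x;J_0)\le\infty=N(x;I_0)$). By Lemma~\ref{P_LC} the disjoint intervals $I_0^1,I_0^2$ cover $I_0$, so $x+N\omega$ lies in exactly one of them. If $x+N\omega\in I_0^1$, then $x+N\omega\in J_0$ because $I_0^1\subset J_0$ by construction, and hence $N(x;J_0)\le N$. If instead $x+N\omega\in I_0^2$, then, using $I_0^2-\nu_0\omega\subset J_0$, we get $x+(N-\nu_0)\omega\in J_0$, so $N(x;J_0)\le N-\nu_0\le N$ \emph{provided} $N-\nu_0\ge 0$.

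So the whole lemma comes down to verifying the inequality $N\ge\nu_0$ in this last subcase, and this is the single point at which the assumption $x\in X_0$ is needed; I expect it to be the only (minor) obstacle. I would argue by contradiction: if $0\le N<\nu_0$, put $m=\nu_0-N$, so $1\le m\le\nu_0$. Since $\nu_0\le\ve^{-1/40}$ while $K_0=[\ve^{-1/20}]$, one has $\nu_0\le K_0-1$ once $\ve$ is sufficiently small (this amounts to $\ve^{-1/20}-2\ge\ve^{-1/40}$, which is part of why $\ve$ must be small); hence $m\in\{1,\dots,K_0-1\}\subset\{-M_0+1,\dots,K_0-1\}$. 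But the inclusion $x+(N-\nu_0)\omega\in J_0$ rewrites as $x\in J_0+m\omega$, contradicting $x\in X_0=\T\setminus\bigcup_{m=-M_0+1}^{K_0-1}(J_0+m\omega)$. This forces $N\ge\nu_0$ and completes the argument.
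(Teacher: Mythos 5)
Your proof is correct and follows essentially the same route as the paper: dispose of Case NR trivially, then in Case R split on whether $x_N\in I_0^1$ or $x_N\in I_0^2$, and in the latter subcase use $x\in X_0$ together with $\nu_0\le K_0-1$ (for $\ve$ small) to rule out $N<\nu_0$. You merely spell out, via the explicit contradiction with membership in $J_0+m\omega$ for $m=\nu_0-N\in[1,K_0-1]$, the step the paper compresses into ``it follows from the assumption on $x$ that $m>\nu_0$.''
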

\begin{rem}Note that $|\bigcup_{m=-M_0+1}^{K_0-1}(J_0+m\omega)|\leq (M_0+K_0)|J_0|\ll\sqrt{\ve}$, so $X_0$ is a large set.
\end{rem}
\begin{proof}In the non-resonant case, when $I_0=J_0$, there is nothing to prove. Assume that we are in the resonant case, and let
$m=N(x;I_0)$. Since $J_0=I_0^1\cup(I_0^2-\nu_0\omega)$, and $0<\nu_0\ll K_0$, it follows from the assumption on $x$ that $m> \nu_0$.
If $x_m\in I_0^1\subset J_0$, then $N(x;J_0)\leq m$; if $x_m\in I_0^2\subset J_0+\nu_0\omega$, then $x_{m-\nu_0}\in J_0$, and thus
$N(x;J_0)\leq m-\nu_0$.

\end{proof}

\subsection{Geometry control}

Now we will send in a "probe" through $J_0$, which we later can follow ("most" iterates will cluster around the probe) in the transition from $J_0$ to $J_0+K_0\omega$.
The next lemma contains the geometric information about this probe. Obtaining control on the geometry, both in the base case as well as in the inductive step,
is the trickiest part in the proof of Proposition \ref{pt_prop}. However, the topological properties of the map $T$ helps quite a bit (some of the properties are formulated
in the lemmas in Section \ref{stf}).

Recall that the reference point  $\beta\in \T\setminus A$ was defined in (\ref{beta}). At many occasions in the paper we will use the crucial fact that if 
the functions $y_k(x)$ are defined by $y_k(x)=\pi(T^k(x,\beta))$, then, by Lemma \ref{SF_L3}(4), we have $y'_k(x)>1/\kappa$ on $\T$ for all $k\geq 1$. In particular,
the functions are strictly increasing.


\begin{lemma}\label{FS_L2}
Let $\Gamma_0=(J_0-M_0\omega)\times\{\beta\}$, and define $I_1\subset J_0$ by
$$
I_1=\pi_1(T^{M_0+K_0}(\Gamma_0)\cap(\T\times A'))-K_0\omega.
$$
Then  $I_1$ consists of two disjoint intervals, $I_1^1$ and $I_1^2$, and $\dist(I_1,\partial J_0)>$. 
Moreover, if we write
$T^{M_0+K_0}(\Gamma_0)=\{(x+K_0\omega,\vf_0(x)):x\in J_0\}$, then $\vf(I_1^i)=A' $ and $\ve^{-K_0/2}<\vf_0'(x)<\ve^{-\rho K_0}$ on $I_1^i$ $(i=1,2)$.
\end{lemma}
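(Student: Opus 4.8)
\medskip
The plan is to follow the probe $\Gamma_0$ through all $M_0+K_0$ iterates, splitting the analysis at the instant $M_0$ when the base returns to $J_0$. Since $\Gamma_0$ sits at the constant height $\beta$, its $k$-th image is the graph --- over a translate of $J_0-M_0\omega$ --- of $y_k(\cdot)=\pi_2(T^k(\cdot,\beta))$, and each such $y_k$ has $y_k'>1/\kappa$ by Lemma~\ref{SF_L3}(4), hence is strictly increasing; this monotonicity is what makes the topological lemmas of Section~\ref{stf} applicable. \emph{Phase 1} (times $0$ to $M_0$): since $x+M_0\omega\in J_0$ for $x\in J_0-M_0\omega$, (\ref{Ch_eq0})/(\ref{Ch_eq1}) give $(J_0+k\omega)\cap I_0=\emptyset$ for $k\in[-M_0,-1]$, so $x+j\omega\notin I_0$ for $0\le j<M_0$ and thus $N(x;I_0)\ge M_0$; by Lemma~\ref{FS_L1}(1) and Lemma~\ref{P_LA}(2) the iterates then stay in $B$, stay in $S$, and satisfy $\prod_{j=\ell}^{k}f'(y_j)<\ve^{k-\ell+1}$ up to time $M_0$. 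Feeding this into Lemmas~\ref{SF_L2} and~\ref{SF_L3}(1) (with $\delta=\ve$ and initial slope $0$), $\psi_0:=\pi_2(T^{M_0}(\Gamma_0))$, viewed over $J_0$, is strictly increasing, takes values in $B$, has $1/\kappa<\psi_0'<2\kappa$, and (Lemma~\ref{P_betalemma}) satisfies $[\beta,\psi_0(\partial J_0)]\subset\T\setminus A$.

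\emph{Phase 2} (times $M_0$ to $M_0+K_0$, geometry). Write $\varphi^{(k)}(x)=\pi_2(T^{M_0+k}(x-M_0\omega,\beta))$ for $x\in J_0$, so $\varphi^{(0)}=\psi_0$ and $\varphi^{(K_0)}=\vf_0$; I would track these curves using the topological lemmas of Section~\ref{stf}, with the constant $\beta$ as reference. The mechanism is: once an orbit enters $A$, the fibre map expands by $>\ve^{-1}$, so the relevant (nested, shrinking) sub-curve's image is $\T$ minus an arc of length $\lesssim\ve$; and since $(R+g(x_k))\cap A'=\emptyset$ exactly when $x_k\notin I_0$ --- which by (\ref{Ch_eq0})/(\ref{Ch_eq1}) holds at all the times that matter --- $A'$ sits inside that image, so the final curve traverses $A'$ fully and exactly once per entry into $A$. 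Entries into $A$ occur at two ``places'': in Case NR, $J_0=I_0^1\cup I_0^2$ has two components, each giving one traversal; in Case R, $J_0$ is one interval but $J_0\cap I_0=I_0^1$ while $(J_0+\nu_0\omega)\cap I_0=I_0^2$, so the single curve traverses $A'$ twice, at the resonant times $\sim M_0+1$ and $\sim M_0+\nu_0+1$ (using $\nu_0\ll K_0$). Concretely I would run the bookkeeping loop: at the step where the curve first meets $A''$ in one interval mapped onto $A''$, produce an overtaking of $\beta$ by Lemma~\ref{Tf_1}; push it forward one step at a time by Lemma~\ref{Tf_0}; restore the ``overtakes $\beta$'' normal form by Lemma~\ref{Tf_3}; and finally use Lemma~\ref{Tf_2} to convert ``$\vf_0$ overtakes the reference at exactly two points'' into ``$\vf_0^{-1}(A')=I_1^1\cup I_1^2$, two intervals with $\vf_0(I_1^i)=A'$''. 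Disjointness of $I_1^1,I_1^2$ is built in, since the reference curve stays in $\T\setminus A'$ strictly between the two overtakings.

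\emph{Derivative bounds and separation.} The upper bound $\vf_0'<\ve^{-\rho K_0}$, on all of $J_0$ hence on $I_1^i$, is soft: in the Lemma~\ref{SF_L2} formula for $\vf_0'$ the factors $f'(y_j)$ with $1\le j\le M_0$ are $<\ve$ by Phase 1 and the remaining $K_0$ factors are $<\ve^{-\rho}$, so termwise summation gives $\vf_0'<\kappa(K_0+2)\ve^{-\rho(K_0-1)}<\ve^{-\rho K_0}$ once $\ve$ is small (as $K_0\ll\ve^{-\rho}$ when $\rho>1$). For the lower bound $\vf_0'>\ve^{-K_0/2}$ on $I_1^i$ I would argue backwards: if $\vf_0(x)\in A'$ then $x_j\notin I_0$ for $j\in[M_0+\nu_0+1,M_0+K_0-1]$ by (\ref{Ch_eq0})/(\ref{Ch_eq1}) (with $\nu_0=0$ in Case NR), so Lemma~\ref{P_L4} applied repeatedly forces $y_j\in A$ for those $j$; the chain rule $y_{j+1}'=g'(x_j)+f'(y_j)y_j'$ --- all terms $\ge0$, $f'>\ve^{-1}$ a.e.\ on $A$, $y_{M_0+\nu_0+1}'>1/\kappa$ by Lemma~\ref{SF_L3}(4) --- then yields $\vf_0'(x)>\ve^{-(K_0-\nu_0-1)}/\kappa>\ve^{-K_0/2}$, using $\nu_0\le\sqrt{K_0}\ll K_0$. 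For the separation: from the edge-of-$A'$ position of the curve at $\partial I_0$ one checks $\varphi^{(1)}(\partial J_0)\notin A''$ (resp.\ $\varphi^{(\nu_0+1)}(\partial J_0)\notin A''$), and since $I_1^i\subset\{x:\varphi^{(1)}(x)\in A\}$ (resp.\ $\{x:\varphi^{(\nu_0+1)}(x)\in A\}$), Lemma~\ref{SF_L4} with $\Delta$ of order $\kappa$ gives the claimed lower bound on $\dist(I_1,\partial J_0)$.

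\emph{Main obstacle.} The delicate part is the geometric control of Phase 2 --- showing the iterated curve meets $A'$ in \emph{exactly} two intervals, each a full traversal, rather than merely grazing $\partial A'$ near $\partial I_0$ or crossing $A'$ an uncontrolled number of times. This is what forces the thickened/shrunk sets $A\subset A''\subset A'$ and $J_0\subset\widehat J_0$ and the reference point $\beta$ into the picture, and it is handled via transversality at $\partial I_0$ (where $R+g(\cdot)$ is tangent to $\partial A'$) together with the chain Lemma~\ref{Tf_1}$\to$Lemma~\ref{Tf_3}$\to$Lemma~\ref{Tf_2}; Case R additionally carries the nuisance that $I_0^1$ and $I_0^2-\nu_0\omega$ overlap inside $J_0$, which has to be absorbed into the bookkeeping.
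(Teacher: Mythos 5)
Your proposal matches the paper's proof in all essential respects: Phase 1 derivative control via Lemma~\ref{FS_L1}(1) and Lemma~\ref{SF_L3}, Phase 2 topological control via the $\beta$-probe shadowing chain Lemma~\ref{Tf_1}~$\to$~(Lemma~\ref{Tf_3} in Case~R)~$\to$~Lemma~\ref{Tf_2}, the derivative lower bound from Lemma~\ref{FS_L1}(2)/Lemma~\ref{P_L4}, the upper bound from Lemma~\ref{SF_L3}(3), and the separation from Lemma~\ref{SF_L4}, with the NR/R case split organized exactly as in the paper. The one slip is the separation estimate in Case~R: the Lipschitz bound you feed into Lemma~\ref{SF_L4} must be $\Delta\sim\kappa\ve^{-\rho\nu_0}$ (the probe can pass through $A$ once at time $M_0+1$ before reaching time $M_0+\nu_0+1$), not $\Delta\sim\kappa$, giving the weaker but still sufficient $\dist(I_1,\partial J_0)>\ve^{\rho\nu_0+1}/(6\kappa)$ as in the paper.
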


\begin{proof}
By the definition of $\Gamma_0$ we have
$$
\varphi_0(x)=\pi_2\left(T^{M_0+K_0}(x-M_0\omega,\beta)\right).
$$

We divide the analysis into two cases, depending on whether $J_0$ consists of one or two intervals (recall the definition of $J_0$ in the previous section). 
(If $g$ would have degree 1, and not $2$ as we have assumed, the set $I_0$ would only contain one single interval, and we would only have the first of the two cases below; 
no resonances would be possible.) 

\medskip
\begin{figure}
\psfrag{a}{$J_0^i-M_0\omega$}
\psfrag{b}{$J_0^i+\omega$}
\psfrag{c}{$J_0^i+K_0\omega$}
\psfrag{A}{$A$}
\psfrag{Ap}{$A'$}
\psfrag{i1}{$I_1^i+K_0\omega$}
\psfrag{be}{$\beta$}
\psfrag{g0}{$\Gamma_0^i$}
\psfrag{g1}{$T^{M_0+1}(\Gamma_0^i)$}
\psfrag{g2}{$T^{M_0+K_0}(\Gamma_0^i)$}
\psfrag{sh}{\tiny $T^{K_0-1}((J_0^i+\omega)\times\{\beta\})$ \normalsize}
\includegraphics[width=10cm]{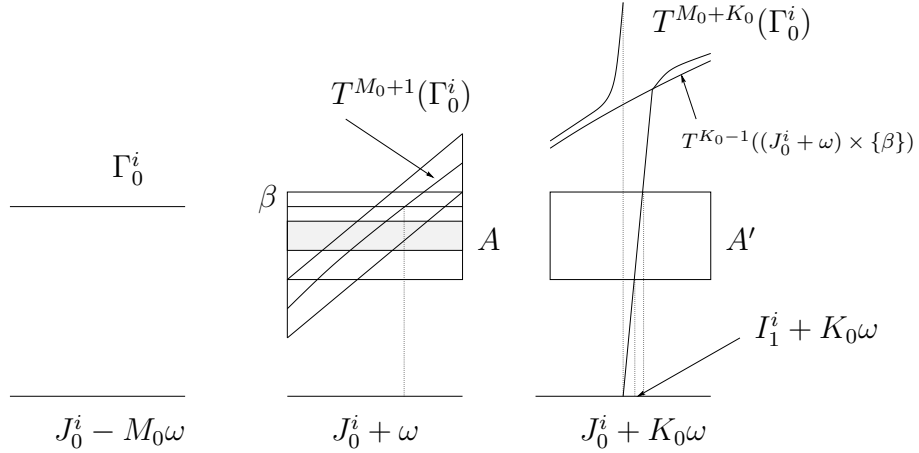}
\caption{The geometry in the non-resonant case ($i=1,2$).}\label{fig8}
\end{figure}
\noindent\emph{Case NR ($J_0=I_0$; the non-resonant case)}. We focus on one of the two intervals in $I_0$, call it $I_0^1$. The other interval is treated in exactly the same way. 
We shall write $I_0^1=[s,t]$.

Let
$$
y_k(x)=\pi_2\left(T^{k}(x-M_0\omega,\beta)\right).
$$
Then we can write $T^k(\Gamma_0)=\{(x+(k-M_0\omega),y_k(x)):x\in I_0\}$.
Thus, to describe the part of the set $I_1$ which lies in $I_0^1$, we need to control $y_{K_0+M_0}(x)$ for $x\in I_0^1$.
We note that if $\theta\in I_0-M_0\omega$, then  $N(\theta;I_0)=M_0$ (by (\ref{Ch_eq1})).  
Since $\beta\in \T\setminus A$, it therefore follows from Lemma \ref{FS_L1}(1) that
\begin{equation}\label{FS_L2_eq0}
(x+\omega,y_{M_0+1}(x))\in S, ~x\in I_0^1.
\end{equation}
Moreover, it also follows that for all $x\in I_{0}-M_0\omega$ we get
$$
\prod_{j=\ell}^N f'(\sigma_j)<\ve^{N-\ell+1} \text{ for all } 1\leq \ell \leq N \text{ and a.e. } \sigma\in \T\setminus A
$$
(here $\sigma_j=\pi_2(T^j(x,\sigma))$).
By applying Lemma \ref{SF_L3}(1), and using the trivial lower bound from Lemma \ref{SF_L3}(4) and the fact that $y_0$ is constant, we get 
\begin{equation}\label{FS_L2_eq01}
\frac{1}{\kappa}<y_{M_0+1}'(x)<2\kappa \text{ on } I_0^1.
\end{equation}
From Lemma \ref{SF_L3}(3) it therefore follows that we have the upper bound
$$
y_{M_0+K_0}'(x)<3\kappa\ve^{-\rho (K_0-1)}< \ve^{-\rho K_0} \text{ on } I_0^1.
$$
Combining (\ref{FS_L2_eq0}) with Lemma \ref{P_LB} we see that 
\begin{equation}\label{FS_L2_eq001}
\{x\in I_0^1: y_{M_0+1}(x)\in A'\} \text{ is a single interval, and } y_{M_0+1}(I_0^1)\supset A'. 
\end{equation}

Next we show that the part of $I_1$ in $I_0^1$ is well-inside $I_0^1$, namely that
$$
\dist(I_1\cap I_0^1,I_0^1)>\ve/(4\kappa). 
$$
To do this we note that $N(x;I_0)\gg K_0$ for $x\in I_0+\omega$, and since $(I_0+K_0\omega)\cap (I_0+\omega)=\emptyset$ (by (\ref{Ch_eq1})), it follows 
from Lemma \ref{FS_L1}(1) that $I_1\cap I_0^1\subset I_1':=\{x\in I_0^1: y_{M_0+1}(x)\in A'\}$.  (Indeed, if $(\theta,\eta)\in (I_0^1+\omega)\times (\T\setminus A)$, then, 
by Lemma \ref{FS_L1}(1), we get $\eta_{K_0-1}\notin A'$.)
Hence it follows from Lemma \ref{SF_L4} and the bound in (\ref{FS_L2_eq01}) that
$
\dist(I_1\cap I_0^1,I_0^1)>\ve/(4\kappa). 
$

To get control on $\varphi_0(x)=y_{M_0+K_0}(x)$ for $x\in I_0^1$, and thus control one of the two pieces of $\Gamma_0$, we will "shadow" 
the orbit of $(x+\omega,y_{M_0+1}(x))$, for $x\in I_0^1$, 
with another one which is easier to follow. To do this, we let
$$
\eta_k(x)=\pi_2\left(T^{k}(x+\omega,\beta)\right).
$$
By applying Lemma \ref{Tf_1} to the restriction of $y_{M_0+1}(x)$ to $I_0^1=[s,t]$, we see that (recall (\ref{FS_L2_eq001}) and note that $\eta_0=\beta$)
\begin{equation}\label{FS_L2_eq1}
[\eta_0,y_{M_0+1}(s)], [\eta_0,y_{M_0+1}(t)]\subset \T\setminus A,
\end{equation}
and there is a unique $\xi\in I_0^1$ such that $y_{M_0+1}(\xi)=\eta_0$, this $\xi\in \inn(I_0^1)$, and $y_{M_0+1}$ overtakes $\eta_0$ at $\xi$. See Fig. \ref{fig8}.
By Lemma \ref{Tf_0} we thus know that $y_{M_0+1+k}$ overtakes $\eta_k$ at $\xi$ for all $k$ (and $\xi$ obviously is the unique point of intersection in $I_0^1$).
Since $N(x+\omega;I_0)\gg K_0$ for all $x\in I_0$ (by (\ref{Ch_eq1})), and since $\beta\in \T\setminus A$, it follows from Lemma \ref{FS_L1}(1) that 
$$
\eta_{K_0-1}(x))\in \T\setminus A' \text{ for all } x\in I_0^1;
$$
and since (\ref{FS_L2_eq1}) holds, the same argument shows that (recall Lemma \ref{P_L0}(1))
$$
[\eta_{K_0-1}(s),y_{M_0+K_0}(s)],  [\eta_{K_0-1}(t),y_{M_0+K_0}(t)]\subset  \T\setminus A' . 
$$
We are thus in a situation where we can apply Lemma \ref{Tf_2}(1) to conclude that there is a 
single interval $I_1^1\subset I_0^1$ such that $I_1^1=\{x\in I_0^1: y_{M_0+K_0}(x)\in A'\}$, and $y_{M_0+K_0}(I_1^1)=A'$.

It remains to check that $y_{M_0+K_0}'(x)$ is large on $I_1^1$. To do this we note that,  
by (\ref{Ch_eq1}), we have $I_0+\omega\subset \T\setminus \bigcup_{j=0}^{K_0-1}(I_0-j\omega)$. It therefore follows from Lemma 
\ref{FS_L1}(2) that if $(\theta,\sigma)\in (I_0+\omega)\times \T$ are such that $\sigma_{K_0-2}\in A'$, then
$$
\prod_{j=0}^{K_0-2}f'(\sigma_j)>\ve^{-(K_0-1)}
$$
whenever the derivatives exist. Since $T^{M_0+1}(\Gamma_0)=\{(x+\omega),y_{M_0+1}(x)): x\in I_0\}$, it thus follows from Lemma \ref{SF_L3}(2), together with
the estimates on $y_{M_0+1}$ in (\ref{FS_L2_eq01}) and the definition of 
$I_1^1$, that $y_{M_0+K_0}'(x)>\ve^{-(K_0-1)}/\kappa\gg \ve^{-K_0/2}$ for a.e. $x\in I_1^1$.

\medskip
\noindent\emph{Case R ($J_0=I_0^1\cup(I_0^2-\nu_0\omega)$; resonant case)}. We shall write the single interval $J_0=[s,t]$. 
As in the previous case we let
$$
y_k(x)=\pi_2\left(T^{k}(x-M_0\omega,\beta)\right).
$$
From (\ref{Ch_eq0}) it follows that $N(x;I_0)\geq M_0$ for all $x\in J_0-M_0\omega$. Exactly as in the NR case we get
\begin{equation}\label{FS_L2_eq3}
(x+\omega,y_{M_0+1}(x))\in S \text{ and } \frac{1}{\kappa}<y_{M_0+1}'(x)<2\kappa , ~x\in J_0,
\end{equation}
as well as the upper bound
$$
y_{M_0+K_0}'(x)<\ve^{-\rho K_0} \text{ on } J_0.
$$ 
Since $J_0\cap I_0^2=\emptyset$ by (\ref{Ch_eq0}), it follows from Lemma \ref{P_LB} that $\{x\in J_0: y_{M_0+1}(x)\in A'\}$ is a single interval, and
$y_{M_0+1}(J_0)\supset A'$. 

\begin{figure}
\psfrag{i1}{$J_0+(\nu_0+1)\omega$}
\psfrag{i2}{$J_0+K_0\omega$}
\psfrag{b}{$\beta$}
\psfrag{g1}{\tiny $T^{M_0+\nu_0+1}(\Gamma_0)$\normalsize}
\psfrag{g2}{\tiny $T^{M_0+K_0}(\Gamma_0)$\normalsize}
\psfrag{p1}{\tiny $T^{\nu_0}((J_0+\omega)\times \{\beta\})$ \normalsize}
\psfrag{p2}{\tiny $T^{K_0-(\nu_0+1)}((J_0+(\nu_0+1)\omega)\times \{\beta\})$ \normalsize}
\psfrag{a}{$A'$}
\includegraphics[width=11cm]{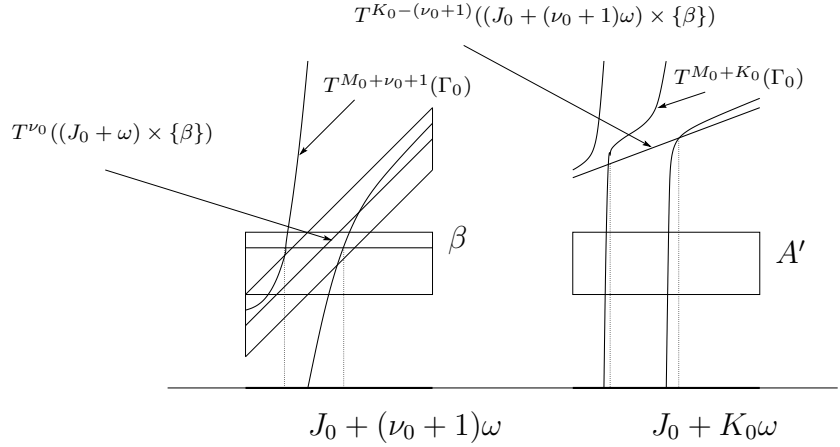}
\caption{The additions in the resonant case.}\label{fig9}
\end{figure}

In this resonant case we will use the shadowing argument twice: one time to get from $J_0+\omega$ to $(J_0+(\nu_0+1)\omega$, and a second
time to go from $(J_0+(\nu_0+1)\omega$ to $J_0+K_0\omega$. First, let, as above, $\eta_k(x)=\pi_2\left(T^{k}(x+\omega,\beta)\right)$. Applying Lemma \ref{Tf_1}
we get
\begin{equation}\label{FS_L2_eq2}
[\eta_0,y_{M_0+1}(s)], [\eta_0,y_{M_0+1}(t)]\subset \T\setminus A,
\end{equation}
and there is a unique $\xi\in J_0$ such that $y_{M_0+1}(\xi)=\beta=\eta_0$, and $y_{M_0+1}$ (trivially) overtakes $\eta_0$ at $\xi$.

By (\ref{Ch_eq0}) we have $(J_0+k\omega)\cap I_0=\emptyset$ for all $1\leq k\leq \nu_0-1$, and 
$(J_0+\nu_0\omega)\cap I_0=I_0^2$. We therefore get, by proceeding exactly as above (with $(x+\omega,\eta_0)$, $x\in J_0$), that
$$
(x+(\nu_0+1)\omega,\eta_{\nu_0}(x))\in S  , ~x\in J_0;
$$
and $\{x\in J_0: \eta_{\nu_0}(x)\in A'\}$ is a single interval, and
$\eta_{\nu_0}(J_0)\supset A'$. Moreover, since (\ref{FS_L2_eq2}) holds, we also get 
$$
\{s+(\nu_0+1)\omega\}\times [\eta_{\nu_0}(s),y_{M_0+\nu_0+1}(s)],  \{t+(\nu_0+1)\omega\}\times [\eta_{\nu_0}(t),y_{M_0+\nu_0+1}(t)]\subset S, 
$$
and since $s+(\nu_0+1)\omega, t+(\nu_0+1)\omega\ \notin \inn(I_0+\omega)$ we have (recall Lemma \ref{P_LA})
\begin{equation}\label{FS_L2_eq4}
[\eta_{\nu_0}(s),y_{M_0+\nu_0+1}(s)], [\eta_{\nu_0}(t),y_{M_0+\nu_0+1}(t)]\subset \overline{\T\setminus A'}.
\end{equation}
We are thus in a situation where we can apply Lemma \ref{Tf_3}. We conclude that the equation $y_{M_0+\nu_0+1}(x)=\beta$ has exactly 
two solutions $\xi_1,\xi_2$ in 
$J_0=[s,t]$, both lying in the interior of $J_0$, and $y_{M_0+\nu_0+1}(x)$ overtakes the constant function $\sigma_0(x)=\beta$ at these two points. See Fig. \ref{fig9}.
Moreover,
\begin{equation}\label{FS_L2_eq5}
[\beta,y_{M_0+\nu_0+1}(s)], [\beta,y_{M_0+\nu_0+1}(t)]\subset \T\setminus A.
\end{equation}

We can now use the above control to verify that the set $I_1$ must be "well-inside" $J_0$. We notice that,  
since (\ref{Ch_eq0}) holds, we get that $N(x,I_0)\gg K_0$ for $x\in J_0+(\nu_0+1)\omega$. Since $(J_0+K_0\omega)\cap I_0=\emptyset$, it thus follows (by the same argument as in the
previous case) from
Lemma \ref{FS_L1}(1) that $I_1\subset \{x\in J_0: y_{M_0+\nu_0+1}(x)\in A\}$. Since we have the derivative estimates in (\ref{FS_L2_eq3}), it follows from the trivial upper bound in
Lemma \ref{SF_L3}(3) that $y_{M_0+\nu_0+1}'<3\kappa\ve^{-\rho\nu_0}$ on $J_0$. Since also (\ref{FS_L2_eq4}) holds, it follows from Lemma \ref{SF_L4} that
$$
\dist(I_1,\partial J_0)>\ve^{\rho\nu_0+1}/(6\kappa).
$$

Finally, to get control on $y_{M_0+K_0}$ we apply shadowing a second time. We shall shadow $y_{M_0+\nu_0+1}$ by the iterates
$$
\sigma_k(x)=\pi_2\left(T^{k}(x+(\nu_0+1)\omega,\beta)\right).
$$
We noted above that $N(x,I_0)\gg K_0$ for $x\in J_0+(\nu_0+1)\omega$. 
Thus it follows from Lemma \ref{FS_L1}  (since $\beta\in\T\setminus A$) that
$$
\sigma_{K_0-(\nu_0+1)}(x)\in \T\setminus A' \text{ for all } x\in J_0;
$$
and since (\ref{FS_L2_eq5}) holds we get
$$
[\sigma_{K_0-(\nu_0+1)}(s),y_{M_0+K_0}(s)], [\sigma_{K_0-(\nu_0+1)}(t),y_{M_0+K_0}(t)] \subset \T\setminus A'. 
$$
Since $y_{M_0+K_0}$ overtakes $\sigma_{K_0-(\nu_0+1)}$ at $\xi_1,\xi_2$, we are thus in a situation where we can apply Lemma \ref{Tf_2}(2) 
and conclude that there are two disjoint intervals $I_1^1,I_1^2$ in $J_0$ such that
$I_1^1\cup I_1^2=\{x\in J_0: y_{M_0+K_0}(x)\in A'\}$; and $y_{M_0+K_0}(I_1^i)=A'$ ($i=1,2$). 

What is left now is to get a lower bound on $y_{M_0+K_0}'$ on $I_1$. Since $N(x,I_0)\gg K_0$ for all $x\in I_0+(\nu_0+1)\omega$ it follows 
from Lemma \ref{FS_L1}(2) that if $(\theta,\eta)\in (I_0+(\nu_0+1)\omega)\times \T$ are such that $\eta_{K_0-(\nu_0+1)}\in A'$ then
$$
\prod_{j=0}^{K_0-\nu_0-2}f'(\eta_j)>\ve^{-(K_0-\nu_0-1)}.
$$
Since we by Lemma \ref{SF_L3}(4) have the trivial bound $y_{M_0+\nu_0+1}'(x)>1/\kappa$ on $\T$, it thus follows from
Lemma \ref{SF_L3}(2) that $y_{M_0+K_0}'(x)>\ve^{-(K_0-\nu_0-1)}/\kappa \gg \ve^{-K_0/2}$ for a.e. $x\in I_1^1$ (recall that $\nu_0^2\leq K_0$).

\end{proof}
\begin{rem}As we also remarked earlier, assuming that $g(x)$ having degree $>2$ would not add any fundamental differences (we just got longer chains of the shadowing argument
when we have, the very unlikely, situations of multiple resonances). However, the notation would become slightly more involved. 
\end{rem}

\end{section}


\begin{section}{Inductive step}
We are now ready for the the inductive step in the proof of Proposition \ref{pt_prop}.
To simplify the statements in the next lemma we shall use the following notation: given sets $J_i$ and positive  integers $M_i,K_i$ ($i\geq 0$), we let 
$X_n, X'_n, G_n, H_n$ ($n\geq 0$) denote the following sets:
$$
X_n=\T\setminus \bigcup_{j=0}^n\bigcup_{m=-(M_j-1)}^{K_j-1}(J_j+m\omega), \quad X_{-1}=\T;
$$
$$
X_n'=\T\setminus \bigcup_{j=0}^n\bigcup_{m=-M_j^{3/2}}^{K_j-1}(J_j+m\omega), \quad X_{-1}'=\T;
$$

$$
G_n= \bigcup_{j=0}^n\bigcup_{m=1}^{K_j}(J_j+m\omega), \quad G_{-1}=\emptyset;\quad \text{and}
$$
$$
H_n=\T\setminus \bigcup_{j=1}^n\bigcup_{m=0}^{M_j}(J_j+m\omega), \quad H_{-1}=\T.
$$
We note that $X_n'\subset X_n$.

The (vague) motivation for these sets is that the sets $J_i$ should be thought of as "bad", and we need to control the approach rate to them (under translation by $\omega$),
as well as the recovering time after visits.

For consistency in the statement of the next lemma  we define 
$$
M_{-1}=K_{-1}=1.
$$
Moreover, in order to get the notations slightly clearer we shall make use of the following convention: Estimates of products of the form
$
\Pi_{j=k}^\ell f'(y_j),
$ 
should be thought to hold, under appropriate assumptions on $(x,y)$, whenever the expression exists (which it does for a.e $y\in \T$).

\begin{lemma}\label{ind_L1} 
There exists an $\ve_2(\gamma,\kappa,\rho)>0$ such that for all $0<\ve<\ve_2$ and any $n\geq 0$ we have the following:
\underline{Assume} that the closed sets $J_0\supset J_1\supset \cdots \supset J_n$ and the integers $M_j,K_j$ ($j=0,1,\ldots, n$)
have been constructed, and that the open sets $\widehat{J}_j\supset J_j$ ($j\in [0,n]$) have been chosen, so that the following four conditions hold: 

\medskip
\noindent Condition $(1)_n$: For each $j\in [0,n]$ there holds:  
\begin{equation}\label{ind_eq1}
1/2< M_j/K_j^2<2  \text{ and }  [\ve^{-K_{j-1}/160}]\leq K_j\leq 2[\ve^{-K_{j-1}/20}]; 
\end{equation}
\begin{equation}\label{ind_eq2}
\widehat{J}_j\cap (\widehat{J}_j+k\omega)=\emptyset \text{ for } 0<|k|\leq M_j^2;
\end{equation}
The sets $J_j$ and $\widehat{J}_j$ both consists of one or two intervals, $J_j^\ell, \widehat{J}_j^\ell$, and the intervals satisfy
\begin{equation}\label{ind_eq21}
\ve^{2\rho K_{j-1}}<|J_j^\ell|<|\widehat{J}_j^\ell|<\ve^{K_{j-1}/2}
\end{equation} 
and 
\begin{equation}\label{ind_eq3}
3\widehat{J}_j^\ell\cap (3\widehat{J}_j^\ell+k\omega)=\emptyset \text{ for } 0<|k|\leq M_j^2.
\end{equation}
Moreover, if $n\geq 1$, we have
\begin{equation}\label{ind_eq4}
J_n-M_n\omega, J_n+K_n\omega\subset \T\setminus \bigcup_{j=0}^{n-1}\bigcup_{m=-M_j^{3/2}}^{M_j^{3/2}}(\widehat{J}_j+m\omega) ~~\left(\subset X_{n-1}'\right).
\end{equation}

\noindent Condition $(2)_n$: If $(x,y)\in X_{n}\times (\T\setminus A)$ and $N=N(x;J_{n})$ then:
\begin{equation}\label{ind_eq5}
\prod_{j=1}^k f'(y_j)<\ve^{(1/2+1/2^{n+1})k} ~\text{ for all } k\in [1,N];
\end{equation}
for all $t\in [1,N]$ such that $x_t\in H_{n-1}$ we have
\begin{equation}\label{ind_eq6}
\prod_{j=k}^t  f'(y_j)<\ve^{(1/2+1/2^{n+1})(t-k+1)} \text{ for all } k\in [1,t]; \quad \text{and}
\end{equation}
\begin{equation}\label{ind_eq7}
\text{ if } y_k \in A' \text{ for some } k\in [1,N], \text{ then } x_k\in G_{n-1}.
\end{equation}

\noindent Condition $(3)_n$: 
Assume that $x\in X_n'$.  
If $(x_k,y_k)\in (\T\setminus G_{n-1})\times A'$ for some  $1\leq k\leq N(x;J_n)$ and some $y\in\T$, then
$$
\prod_{j=0}^{k-1} f'(y_j)>\ve^{-(1/2+1/2^{n+1})k}.
$$


\noindent Condition $(4)_n$: If $\Gamma_n=(J_n-M_n\omega)\times \{\beta\}$ and 
$$
I_{n+1}=\pi_1(T^{M_n+K_n}(\Gamma_n))\cap (\T\times A')-K_n\omega,
$$
then $I_{n+1}\subset J_n$ consists of two disjoint intervals $I_{n+1}^i$ $(i=1,2)$, and $\dist(I_{n+1},\partial J_n)>\ve^{K_n^{2/3}}$. Moreover, if we write 
$T^{M_n+K_n}(\Gamma_n)=\{(x+K_n\omega,\vf_{n}(x)): x\in J_n\}$, then $\vf_n(I_{n+1}^i)=A'$ and $\ve^{-K_n/2}<\vf_n'(x)<\ve^{-\rho K_n}$ on $I_{n+1}^i$ $(i=1,2)$.

\underline{Then} there is a closed set $J_{n+1}\subset J_n$, and an open set $\widehat{J}_{n+1}\supset J_{n+1}$, 
both consisting of one or two intervals, as well as integers $M_{n+1}, K_{n+1}$ such that
$(1-4)_{n+1}$ hold. Furthermore, the following statement holds for each interval $J_{n+1}^\ell$ in $J_{n+1}$: 

\noindent Condition $(C)_n$: If $\mathcal{O}\subset (J_{n+1}^\ell-M_n\omega)\times (\T\setminus A)$ is a set such that $\pi_1(\mathcal{O})$ is $\delta$-dense 
in $J_{n+1}^\ell-M_n\omega$, then $\pi_2(T^{M_n+K_n}(\mathcal{O}))$ is $(\ve^{-\rho K_n}\cdot \delta+2\ve^{(M_n+K_n)/2})$-dense in $A'$.

\end{lemma}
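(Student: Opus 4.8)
The plan is to run the inductive step as a scaled copy of the base case (Section~5): first construct the new data $J_{n+1}\subset J_n$, $\widehat J_{n+1}\supset J_{n+1}$ and the integers $M_{n+1},K_{n+1}$; then verify $(1)_{n+1}$--$(3)_{n+1}$ (the ``arithmetic part''); then verify $(4)_{n+1}$ by pushing a probe through $J_{n+1}$ exactly as in Lemma~\ref{FS_L2} (the ``geometric part''); and finally deduce $(C)_n$ from $(4)_n$. The step I expect to be the main obstacle is the geometric step $(4)_{n+1}$, just as Lemma~\ref{FS_L2} is the delicate point in the base case.

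\emph{Construction and arithmetic.} Using the intervals $I_{n+1}^1,I_{n+1}^2$ and the curve $\vf_n$ supplied by $(4)_n$, I would first test whether $I_{n+1}^1,I_{n+1}^2$ are resonant: is there $\nu_{n+1}$ with $0<\nu_{n+1}\le\ve^{-K_n/40}$ such that $I_{n+1}^1\cap(I_{n+1}^2-\nu_{n+1}\omega)\neq\emptyset$? In the resonant case set $J_{n+1}=I_{n+1}^1\cup(I_{n+1}^2-\nu_{n+1}\omega)$ (a single interval) and $K_{n+1}=[\ve^{-K_n/20}]$; otherwise set $J_{n+1}=I_{n+1}$ and $K_{n+1}=[\ve^{-K_n/160}]$. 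In both cases $M_{n+1}=K_{n+1}^2$ and $\widehat J_{n+1}$ is a slight open enlargement of $J_{n+1}$, exactly as $\widehat J_0$ was built in Section~5.2. Then $(\ref{ind_eq1})_{n+1}$ is immediate; the separation statements $(\ref{ind_eq2})_{n+1},(\ref{ind_eq3})_{n+1}$ and the size estimate $(\ref{ind_eq21})_{n+1}$ follow from the Diophantine condition (\ref{DC}) (via the arithmetic lemmas of Section~8, e.g.\ Lemmas~\ref{M_L1}--\ref{M_L2}) together with the bounds $\ve^{2\rho K_n}<|I_{n+1}^i|<\ve^{K_n/2}$ read off $(4)_n$ from $|A'|\sim\ve$ and $\ve^{-K_n/2}<\vf_n'<\ve^{-\rho K_n}$; here $\rho>1$ is used to see that the separation scale $\gtrsim\ve^{-\rho K_n/2}$ dominates $M_{n+1}^2=K_{n+1}^4$. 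Condition $(\ref{ind_eq4})_{n+1}$ is the usual bookkeeping: $J_{n+1}\subset J_n\cup(J_n-\nu_{n+1}\omega)$, and the large translates of $J_{n+1}$ are compared with the regions already excluded in $(4)_n$ (namely $J_n\mp M_n\omega\subset X_{n-1}'$) and with the gap estimates $(\ref{ind_eq2})_j$, $j\le n$; the super-exponential growth of the scales makes the remaining cases automatic. Conditions $(2)_{n+1}$ and $(3)_{n+1}$ are the arithmetic core and are proved as in \cite{B1}: for $x\in X_{n+1}$, $y\in\T\setminus A$, the orbit before its first visit to $J_{n+1}$ behaves as at level $n$ apart from finitely many excursions that each reach $J_n$; by $(\ref{ind_eq2})_n$ each excursion is followed by a stretch of length $\gtrsim M_n$ in which $J_n$ is avoided, so a recovery cost of at most $\ve^{-\rho K_n}$ over $\le K_n$ steps is amortized against contraction at rate $\ve^{1/2+1/2^{n+1}}$ over $\gtrsim M_n=K_n^2$ steps, and since $\rho K_n\ll 2^{-n-2}M_n$ for $\ve$ small the effective rate improves to $\ve^{1/2+1/2^{n+2}}$; the entry-time statement $(\ref{ind_eq7})_{n+1}$ is tracked alongside, and $(3)_{n+1}$ is the same computation run backwards (using Lemma~\ref{FS_L1}(2) and Section~3).

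\emph{The geometric step $(4)_{n+1}$.} As in Lemma~\ref{FS_L2}, push $\Gamma_{n+1}=(J_{n+1}-M_{n+1}\omega)\times\{\beta\}$ forward $M_{n+1}+K_{n+1}$ times, write $T^{M_{n+1}+K_{n+1}}(\Gamma_{n+1})=\{(x+K_{n+1}\omega,\vf_{n+1}(x)):x\in J_{n+1}\}$, and split into the non-resonant case ($J_{n+1}$ two intervals, one shadowing) and the resonant case ($J_{n+1}$ one interval, two successive shadowings along $J_{n+1}+\omega\to J_{n+1}+(\nu_{n+1}+1)\omega\to J_{n+1}+K_{n+1}\omega$). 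The role of Lemma~\ref{FS_L1} in the base case is now played by $(2)_{n+1}$ and $(3)_{n+1}$: during the first $\sim M_{n+1}$ steps the fiber stays in $\T\setminus A'$ and contracts at rate $\ve^{1/2+1/2^{n+2}}$, giving $1/\kappa<\vf_{n+1}$-type derivatives $<2\kappa$ after the curve enters the strip $S$ (Lemma~\ref{SF_L3}(1),(4)); the last $\sim K_{n+1}$ steps, during which the curve crosses $A'$, are controlled from above by the trivial bound Lemma~\ref{SF_L3}(3) and from below by $(3)_{n+1}$ with Lemma~\ref{SF_L3}(2). The topological lemmas~\ref{Tf_0}--\ref{Tf_3} then pin down the geometry: $\vf_{n+1}$ overtakes the reference iterates of $(\cdot,\beta)$ at exactly one (NR) resp.\ two (R) points of $J_{n+1}$, so Lemma~\ref{Tf_2} gives that $\{x\in J_{n+1}:\vf_{n+1}(x)\in A'\}$ consists of two intervals $I_{n+2}^i$ with $\vf_{n+1}(I_{n+2}^i)=A'$; Lemma~\ref{SF_L4} with the upper derivative bound gives $\dist(I_{n+2},\partial J_{n+1})>\ve^{K_{n+1}^{2/3}}$; and $(3)_{n+1}$ with Lemma~\ref{SF_L3}(2) gives $\vf_{n+1}'>\ve^{-K_{n+1}/2}$ on $I_{n+2}^i$. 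The delicate points are exactly those of Lemma~\ref{FS_L2}: checking that the ``probes'' $(\cdot,\beta)$ lie in $\T\setminus A$ (resp.\ $\T\setminus A'$) at precisely the right moments so that Lemmas~\ref{Tf_1} and~\ref{Tf_3} apply, and that the needed $N(x;J_n)\gg K_n$ inequalities hold along the relevant translates; these are now consequences of $(1)_{n+1}$ rather than of (\ref{Ch_eq0})--(\ref{Ch_eq1}).

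\emph{The density statement $(C)_n$.} Fix $a\in A'$. The interval $J_{n+1}^\ell$ contains one of the intervals $I_{n+1}^i$, and on it $\vf_n$ maps onto $A'$ by $(4)_n$ (in the resonant case use $I_{n+1}^1\subset J_{n+1}$); so there is $x_0\in I_{n+1}^i\subset J_{n+1}^\ell$ with $\pi_2(T^{M_n+K_n}(x_0-M_n\omega,\beta))=\vf_n(x_0)=a$. We may assume $\delta<\ve^{K_n^{2/3}}<\dist(I_{n+1},\partial J_n)$, since otherwise the asserted density radius exceeds $1$. As $\pi_1(\mathcal O)$ is $\delta$-dense in $J_{n+1}^\ell-M_n\omega\ni x_0-M_n\omega$, pick $(x',y')\in\mathcal O$ with $d(x',x_0-M_n\omega)\le\delta$; then $y'\in\T\setminus A$, the point $x'+M_n\omega$ lies in the $\delta$-neighbourhood of $I_{n+1}^i$ inside $J_n$, one has $N(x';J_n)=M_n$, and $x'\in X_n$ (because $x'\in J_n-M_n\omega\subset X_{n-1}'$ by $(4)_n$, while the $J_n$-constraint is void by $(\ref{ind_eq2})_n$). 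Now
$$
d(\pi_2(T^{M_n+K_n}(x',y')),a)\le d(\pi_2(T^{M_n+K_n}(x',y')),\pi_2(T^{M_n+K_n}(x',\beta)))+d(\vf_n(x'+M_n\omega),\vf_n(x_0)).
$$
For the first term, $y'$ and $\beta$ both lie in $\T\setminus A$, so $f$ maps the arc between them into $R$, and the $T^{M_n+K_n}$-image of that arc has length at most $\int_{\T\setminus A}\prod_{j=0}^{M_n+K_n-1}f'(\sigma_j)\,d\sigma\le\ve^{-\rho K_n}\ve^{(1/2+1/2^{n+1})M_n}\int_{\T\setminus A}f'(\sigma)\,d\sigma<2\ve^{(M_n+K_n)/2}$, using $(2)_n$ for $(x',\sigma)$ on its first $M_n$ steps, the trivial bound $f'<\ve^{-\rho}$ over the remaining $\le K_n$ steps, $\int_{\T\setminus A}f'=|R|<\ve$, and $M_n=K_n^2$ with $\ve$ small. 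For the second term, $\vf_n'<\ve^{-\rho K_n}$ near $x_0$ (from the construction underlying $(4)_n$ together with the derivative formula of Lemma~\ref{SF_L2}), whence it is $\le\ve^{-\rho K_n}\delta$. Adding, $\pi_2(T^{M_n+K_n}(\mathcal O))$ is $(\ve^{-\rho K_n}\delta+2\ve^{(M_n+K_n)/2})$-dense in $A'$, which is $(C)_n$.
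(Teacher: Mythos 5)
Your proposal captures the right high‑level architecture (arithmetic step, geometric step, density step; probes and shadowing; the topological Lemmas~\ref{Tf_0}--\ref{Tf_3}), and your argument for $(C)_n$ is essentially a correct, if slightly cruder, version of the paper's (which goes through Sublemmas~\ref{ind_sl1}--\ref{ind_sl2}). However, there are two concrete gaps.

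\textbf{The choice of $M_{n+1},K_{n+1}$.} You fix $K_{n+1}=[\ve^{-K_n/20}]$ (resp.\ $[\ve^{-K_n/160}]$) and $M_{n+1}=K_{n+1}^2$ exactly, and then assert that $(\ref{ind_eq4})_{n+1}$ is ``automatic''. It is not. Condition $(\ref{ind_eq4})_{n+1}$ demands that the \emph{specific} translates $J_{n+1}-M_{n+1}\omega$ and $J_{n+1}+K_{n+1}\omega$ of a very short interval avoid the (small, but nonempty) union $\bigcup_{j=0}^{n}\bigcup_{m=-M_j^{3/2}}^{M_j^{3/2}}(\widehat{J}_j+m\omega)$. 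The smallness of that union only tells you that \emph{most} integer translates are good; it says nothing about the two particular integers you fixed. The paper keeps the needed flexibility: it first sets $K=[\ve^{-K_n/20}]$, $M=K^2$, and then uses Lemma~\ref{M_L3} to choose $K_{n+1}\in[K+1,K+M_n^2]$ and $M_{n+1}\in[M-M_n^2,M-1]$ so that $(\ref{ind_eq4})_{n+1}$ holds, relying on $(\ref{ind_eq3})_j$ ($j\le n$) as the separation hypothesis; the window $M_n^2\ll K$ is what makes $M_{n+1}/K_{n+1}^2\approx 1$ (hence only the inequality $1/2<M_j/K_j^2<2$ in $(\ref{ind_eq1})$, not equality). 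Without some version of this selection argument, $(\ref{ind_eq4})_{n+1}$ simply has no reason to hold.

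\textbf{The geometric step $(4)_{n+1}$.} You propose to run the same shadowing route as in the base case, ``$J_{n+1}+\omega\to J_{n+1}+(\nu_{n+1}+1)\omega\to J_{n+1}+K_{n+1}\omega$'', and you say that during the first $\sim M_{n+1}$ steps the fiber ``stays in $\T\setminus A'$''. Both statements fail at level $n+1$. During the $M_{n+1}+K_{n+1}$-step journey from $J_{n+1}-M_{n+1}\omega$, the $x$-coordinate passes through $J_n$ (and through the smaller sets $J_j$) many times, and at each such passage the fiber does cross $A'$; the effective contraction $\ve^{1/2+1/2^{n+2}}$ that you invoke is precisely the \emph{averaged} rate after such excursions (Sublemma~\ref{ind_sl1}/\ref{ind_sl4}), not a pointwise rate in $\T\setminus A'$. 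More importantly, to pin down the topology of $\vf_{n+1}$ on $J_{n+1}$ you cannot directly compare with $T^k(\cdot,\beta)$ over a long non‑resonant stretch as in Lemma~\ref{FS_L2}, because the intermediate iterates pass through $J_n$ where the probe at level $n$ itself gets deformed. The paper's Step~4 handles this by inserting two uses of the level-$n$ probe $\vf_n$ from $(4)_n$: in the resonant case the route is $J_{n+1}-M_{n+1}\omega\to J_{n+1}-M_n\omega\to J_{n+1}+K_n\omega$ (via $(4)_n$) $\to J_{n+1}+(\nu_{n+1}-M_n)\omega\to J_{n+1}+(\nu_{n+1}+K_n)\omega$ (via $(4)_n$) $\to J_{n+1}+K_{n+1}\omega$, using $(\ref{ind_eq90})$--$(\ref{ind_eq92})$ and $(\ref{ind_eq8})$--$(\ref{ind_eq8p})$ to justify that each checkpoint lies in the good set, Sublemmas~\ref{ind_sl2},~\ref{ind_sl4} to show the probes shadow each other well enough, and only then Lemmas~\ref{Tf_1}--\ref{Tf_3} and~\ref{SF_L4}. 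Without the two intermediate applications of $(4)_n$, the endpoint conditions needed for Lemma~\ref{Tf_3} (e.g.\ that $\eta_{M_n+K_n}(x)\notin\inn(A')$ at $x=s,t$) and the $\{x:\vf_{n+1}(x)\in A'\}$ structure cannot be established.

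Minor remark: $(3)_{n+1}$ in the paper (Sublemma~\ref{ind_sl5}) is not literally ``$(2)_{n+1}$ run backwards''; it is a forward argument with a careful split of the product into segments between consecutive visits to $J_n$ and uses $(3)_n$ recursively. Your appeal to Lemma~\ref{FS_L1}(2) only covers the base case $n=0$.
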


\begin{rem}We note that properties $(1-3)_n$ are of an arithmetic nature while $(4)_n$ contains information about the  geometry.
Moreover, condition $(3)_n$, which actually is a statement about iterations of $T^{-1}$, could have been stated in a more precise way, similar to $(2)_n$. But we do not really
need better estimates, so we have chosen to formulate the condition like this. (This is the same comment as the one in the remark following Proposition \ref{pt_prop}.) 
\end{rem}

Before proving this lemma we will show how it, together with the base case in the previous section, is used to prove Proposition \ref{pt_prop}.

\begin{proof}[Proof of Proposition \ref{pt_prop}]We recall that $M_{-1}=K_{-1}=1$. 
The sets $J_0,\widehat{J_0}$ and the integers $K_0,M_0$ were
defined in Subsection \ref{DJMK}. We now check that the premises $(1-4)_n$ in Lemma \ref{ind_L1} are satisfied for
$n=0$. 

From the definitions in Subsection \ref{DJMK} we note that condition $(1)_0$ holds (note that condition $(\ref{ind_eq4})$ is void in the case $n=0$). 
Condition $(2)_0$ easily follows from Lemma \ref{FS_L1}(1). Indeed, if $x\in X_0$ it follows from Lemma \ref{FS_L3} that $N(x;J_0)\leq N(x;I_0)$. Note that
$H_{-1}=G_{-1}=\emptyset$. (In particular, condition $(\ref{ind_eq7})_0$ says that if $y\in \T\setminus A$, then $y_k\in A'$ for all $1\leq k\leq N(x;J_0)$.)
Furthermore, condition $(3)_0$ follows from Lemma \ref{FS_L1}(2). Finally the geometric condition $(4)_0$ is the content of Lemma \ref{FS_L2}.

Thus the inductive machine starts and gives us closed sets $J_0\supset J_1\supset \ldots$,  open sets $\widehat{J}_j\supset J_j$ ($j\geq 0$), and 
integers $M_j,K_j$ ($j\geq 0$) such that conditions $(1-4)_n$ holds for all $n\geq 0$.

We begin by looking at the sizes of the integers $M_j,K_j$. If we let 
$\Delta=\ve^{-1/160}$, which can be made as big as we want by taking $\ve$ small, we have $K_{j}\geq \Delta^{K_{j-1}}$ for all $j\in[0,n]$.
By $(\ref{ind_eq1})$ we see that $M_j\approx K_j^2$.
We also note that if we let $N_j=M_j^2$, then 
\begin{equation}\label{ind_eq7p}
M_j/N_j< 2/K_j^2 \quad\text{ and } \quad M_j^{3/2}/N_j<2/K_j.
\end{equation}
Moreover, 
$$
K_j/M_j<2/K_j \text{ and } M_j/M_j^{3/2}<2/K_j.
$$

We now define $X\subset \T$ to be the set
$$
X=\T\setminus \bigcup_{j=0}^\infty\bigcup_{m=-M_j^{3/2}}^{K_j}(J_j+m\omega). 
$$ 
From $(1)_n$ we see that $M_j<2K_j^2<8\ve^{-K_{j-1}/10}$ and $|J_j|<2\ve^{K_{j-1}/2}$ for all $j\geq 0$. Thus
$$
|X|\geq 1-\sum_{j=0}^\infty2M_j^{3/2}|J_j|\approx 1.
$$

By definition we have $X_n\supset X_n'\supset X$  and $\T\setminus G_n\supset X$ for all $n\geq 0$.  We also note that if we fix $x\in X$, then $N(x;J_n)\geq M_n^{3/2}$ for all $n\geq 0$.
Therefore statement (1) in Proposition \ref{pt_prop}, and the first half of statement (2), follow immediately from $(2)_n$ ($n\geq 0$) in Lemma \ref{ind_L1}.
Moreover, statement (3) in Proposition \ref{pt_prop} follows from $(3)_n$ ($n\geq 0$) in Lemma \ref{ind_L1}

It remains to prove the second part of (2), i.e., to prove that 
$$
\overline{\{(x_j,y_j)\}_{j\geq 0}}=\T^2 \text{ for all } (x,y)\in X\times (\mathbb{T}\setminus A). 
$$
To do this, fix $(x,y)\in X\times (\mathbb{T}\setminus A)$, and let $\mathcal{O}=\{(x_j,y_j)\}_{j\geq 0}$ be the forward orbit of $(x,y)$. We recall that $A'\supset A$.
From $(\ref{ind_eq7})_n$ in $(2)_n$ it follows that if $x_j\in A'$ for some $j\geq 1$, then
 $x_j\in G_\infty=\cup_{n\geq 0}G_n$. Moreover, combining $(\ref{ind_eq2})_n$ and $(\ref{ind_eq4})_n$ gives us 
$$
(J_{n}-M_n\omega)\cap G_n=\emptyset.
$$ 
Thus, if $x_j\in J_{n}-M_n\omega$ and $y_j\in A'$, then $x_j\in  G_\infty\setminus G_{n}= \bigcup_{j=n+1}^\infty\bigcup_{m=0}^{K_j}(J_j+m\omega)$.
From the estimates in $(1)_n$ it follows that $|G_\infty\setminus G_{n}|\ll\ve^{K_n/4}.$ Moreover, by $(\ref{ind_eq21})_n$ 
each interval in $J_n$ has a length $>\ve^{2\rho K_{n-1}}\gg \ve^{K_n/4}$.
We conclude that for each $n\geq 0$ the set $\pi_1(\mathcal{O}\cap ((J_n-M_n\omega)\times (\mathbb{T}\setminus A)))$ is $\ve^{K_n/4}$- dense in $J_n-M_n\omega$. 
Hence, for each $n\geq 1$ and each $J_{n}^\ell$ it follows from condition $(C)_{n-1}$ that the set
\begin{equation}\label{eq_proof_pt_prop2}
\pi_2(\mathcal{O}\cap ((J_n^\ell+K_n\omega)\times A')) \text{ is } (\ve^{-\rho K_{n-1}}\cdot\ve^{K_n/4} +2\ve^{(M_{n-1}+K_{n-1})/2}) \text{-dense in } A'.
\end{equation}
We note that $\ve^{-\rho K_{n-1}}\cdot\ve^{K_n/4} +2\ve^{(M_{n-1}+K_{n-1})/2}\to 0$ as $n\to\infty$.

By compactness of $\T$ there must be a point $x^*\in \T$ and an infinite subsequence $n_i$ such that $\dist(x^*,J_{n_i}+K_{n_i}\omega)\to 0$ 
as $i\to\infty$. From (\ref{eq_proof_pt_prop2}) it thus follows that
\begin{equation}\label{eq_proof_pt_prop}
\overline{\mathcal{O}}\supset \{x^*\}\times A'.
\end{equation}
We claim that $x^*\in X_n$ for all $n\geq 0$. (This is the only place where we need the open intervals $\widehat{J}_n$.) 
Indeed, by $(\ref{ind_eq4})_n$ we have that each $J_{n}+K_n\omega\subset S_n:=
\T\setminus \bigcup_{j=0}^{n-1}\bigcup_{m=-2M_j}^{2M_j}(\widehat{J}_j+m\omega)$. We note that each $S_n$ is closed, and clearly $S_{n+1}\subset S_n$. Moreover, the estimates in $(1)_n$ show
(as we did above for $X$) that $|\cap_{n\geq0} S_n|\approx 1$. From Lemma \ref{M_L4} it therefore follows 
that $x^*\in \cap_{n\geq1} S_n\subset \cap_{n\geq0} X_n$.

To complete the proof we note that (since $x^*\in\cap_{n\geq0} X_n$) $N(x^*,J_n)\geq M_n$ for all $n\geq 0$. Thus it follows from $(2)_n$ (applied with
$x=x^*$) that  
$$
\prod_{j=1}^k f'(y_j)<\ve^{k/2} ~\text{ for all } k \geq 1 \text{ and  for a.e. } y\in\T\setminus A.
$$
Combining this estimate with Lemma \ref{SF_L1} yields 
$|\pi_1(T^k(x^*,\T\setminus A))|\to 0 $ as $k\to\infty$. Thus we must have $|\pi_1(T^k(x^*,A))|\to 1 $ as $k\to\infty$, and hence also
$\overline{\bigcup_{k\geq0}T^k(x^*,A)}=\T^2$. Since 
(\ref{eq_proof_pt_prop}) holds, we can therefore conclude that
$\overline{\mathcal{O}}=\T^2$.
\end{proof}

Now we turn to the proof of the inductive lemma, Lemma \ref{ind_L1}.

\begin{proof}[Proof of Lemma \ref{ind_L1}]
The proof, which is rather long, is divided into steps. The first thing we do is to make a few observation on the position of translates of the set $J_n$. 
From  $(\ref{ind_eq2})_n$ and $(\ref{ind_eq4})_n$, together with the definition of the set $X_n$, it immediately follows that
\begin{equation}\label{ind_eq8}
J_n-M_n\omega\subset X_n \text{ and } J_n+K_n\omega\subset X_n'\subset X_n.
\end{equation}
Furthermore, since $J_{j+1}\subset J_j$ ($j=0,1,\ldots, n-1$) and since (\ref{ind_eq2}) holds, we have
\begin{equation}\label{ind_eq8p}
J_n, J_n-M_n\omega \subset H_n\subset \T\setminus G_n.
\end{equation}
These facts will be used several times below. We also recall the discussion about the sizes of the integers $K_j,M_j$, and their relationships, from the proof of Proposition 
\ref{pt_prop} above. 

\medskip
\noindent\emph{Step 1} (Construction of $J_{n+1}$ and $\widehat{J}_{n+1}$, and choice of $M_{n+1},K_{n+1}$; verification of $(1)_{n+1}$): 
From the estimates in $(4)_n$ it follows that  each of the two intervals in $I_{n+1}$ has a length in the interval $(\ve^{2\rho K_n},\ve^{K_n/2}/2)$.
Thus these intervals are much much shorter than the ones in $J_j$ and $\widehat{J}_j$ ($j=0, \ldots, n$), as given by the estimates in $(\ref{ind_eq21})_j$.

When we shall choose $M_{n+1}$ and $K_{n+1}$ we will use Lemma \ref{M_L3} (with the $Z_j=M_j^{3/2}$). From the above observation on the length of the intervals, together with
(\ref{ind_eq7p}) and $(\ref{ind_eq3})_j$ $(j=0,\ldots, n)$, we see that the premises for Lemma \ref{M_L3} are fulfilled for any interval
$J$ of length $<\ve^{K_n/2}$. (Here the $p$ in Lemma \ref{M_L3} is $2$).

We now check whether the two intervals in $I_{n+1}$ in 
$(4)_n$ are resonant or not. We know that the length of each of the intervals satisfies $<\frac12\ve^{K_n/2}$. Thus it follows from Lemma \ref{M_L1}
that 
$$
I_{n+1}^\ell\cap(I_{n+1}^\ell+k\omega)=\emptyset \text{ for all } 0<|k|\leq \sqrt{\gamma}\cdot\ve^{-K_n/4}.
$$

Case R (resonant case): If there is a $\nu_{n+1}$, $0<\nu_{n+1}\leq\ve^{-K_n/40}$, such that
$$
I_{n+1}^1\cap (I_{n+1}^2-\nu_{n+1}\omega)\neq \emptyset
$$
then we let $J_{n+1}$ be the interval $J_{n+1}=I_{n+1}^1\cup (I_{n+1}^2-\nu_{n+1}\omega)$ 
(if the above relation holds with $I_{n+1}^1$ and $I_{n+1}^2$ interchanged, we relabel the intervals). 
Thus the length of $J_{n+1}$ is smaller than $\ve^{-K_n/2}$. 
Let $K=[\ve^{-K_n/20}]$ and $M=K^2$. Applying Lemma \ref{M_L3} with $J=J_{n+1}+K\omega$ shows that there is a $K_{n+1}\in [K+1,K+M_n^2]$ such that
$J_{n+1}+K_{n+1}\omega$ satisfy $(\ref{ind_eq4})_{n+1}$; and applying  Lemma \ref{M_L3} with $J=J_{n+1}-M\omega$ gives us an $M_{n+1}\in [M-M_n^2,M-1]$ such that
$J_{n+1}+M_{n+1}\omega$ satisfy $(\ref{ind_eq4})_{n+1}$. Since $M_n^2\approx K_n^4\ll K$ (by $(\ref{ind_eq1})_n$), it follows that $M_{n+1}/K_{n+1}\approx M/K^2=1$.
Thus $(\ref{ind_eq1})_{n+1}$ holds. Finally, we have $|J_{n+1}|<\ve^{-K_n/2}$ (and $J_{n+1}$ is closed). Let $\widehat{J}_{n+1}\supset J_{n+1}$ be an open interval
satisfying $|\widehat{J}_{n+1}|<\ve^{-K_n/2}$ (any such interval works). Since $M_{n+1}^2\approx \ve^{-K_n/5}\ll \ve^{-K_n/4}$,
conditions $(\ref{ind_eq2}-\ref{ind_eq3})_{n+1}$ immediately follow from Lemma \ref{M_L1}. Furthermore, from Lemma \ref{M_L2} we get
\begin{equation}\label{ind_eq90}
\begin{aligned}
(J_{n+1}&+k\omega)\cap I_{n+1}=\emptyset \text{ for all } k\in[-M_{n+1}^2,M_{n+1}^2]\setminus\{0,\nu_{n+1}\}, \text{ and } \\
J_{n+1}&\cap I_{n+1}=I_{n+1}^1, ~(J_{n+1}+\nu_{n+1}\omega)\cap I_{n+1}=I_{n+1}^2.
\end{aligned}
\end{equation}

From the above definitions we see that $\nu_{n+1}^2\leq K_{n+1}$. Moreover, since $(\ref{ind_eq2})_n$ holds, and $I_{n+1}\subset J_n$, we have $\nu_{n+1}\geq M_n^2$. Thus,
\begin{equation}\label{ind_eq91}
M_n^2\leq \nu_{n+1}\leq K_{n+1}^{1/2}.
\end{equation}
Furthermore , since $I_{n+1}\subset J_n$, and since $\dist(I_{n+1},J_n)>\ve^{K_n^{2/3}}\gg 
\ve^{K_n/2}$, it follows that 
\begin{equation}\label{ind_eq92}
J_{n+1}=I_{n+1}^1\cup (I_{n+1}^2-\nu_{n+1}\omega)\subset J_{n} \text{ and } J_{n+1}+\nu_{n+1}\omega\subset J_n
\end{equation} 
(since $|I_{n+1}^\ell|<\ve^{K_n/2}$).

Case NR (non-resonant case): Otherwise we have
\begin{equation}\label{ind_eq9p}
I_{n+1}\cap (I_{n+1}+k\omega)=\emptyset \text{ for all } 0<|k|\leq \ve^{-K_n/40}.
\end{equation}
In this case we let $J_{n+1}=I_{n+1}$; thus $J_{n+1}$ contains two intervals (and trivially we have $J_{n+1}\subset J_n$). We let $\widehat{J}_{n+1}\supset J_{n+1}$ be an open set 
consisting of two intervals, each of length $<\ve^{K_n/2}/2$, and such that (\ref{ind_eq9p}) holds with $I_{n+1}$ replaced by $\widehat{J}_{n+1}$ (this is possible since $I_{n+1}$ is closed).
Next, let $K=[\ve^{-K_n/160}]$ and $M=K^2$. 
By proceeding as above, applying Lemma \ref{M_L3} with
$J=J_{n+1}+K\omega$ and $J=J_{n+1}+M\omega$, respectively, gives us $M_{n+1}\in [M-M_n^2,M-1]$ and $K_{n+1}\in [K+1,K+M_n^2]$ such that
$(\ref{ind_eq4})_{n+1}$ holds. Moreover, since $M_{n+1}^2\leq [\ve^{-K_n/40}]$, condition $(\ref{ind_eq2})_{n+1}$ follows from (\ref{ind_eq9p}); and $(\ref{ind_eq3})_{n+1}$
follows from Lemma \ref{M_L1} (since $|\widehat{J}_{n+1}^\ell|<\ve^{-K_n/2}/2$).

\medskip
From these definitions we get, in particular:
\begin{sublem}\label{ind_sl0}
If $x\in \T\setminus \bigcup_{m=0}^{K_{n+1}-1}(J_{n+1}+m\omega) \subset X_{n+1}$, then $N(x; J_{n+1})\leq N(x;I_{n+1})$.
\end{sublem}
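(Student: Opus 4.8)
The plan is to argue exactly as in the base case, Lemma~\ref{FS_L3}, splitting according to the two cases used in Step~1 to define $J_{n+1}$. In Case NR we have $J_{n+1}=I_{n+1}$ by construction, so there is nothing to prove: $N(x;J_{n+1})=N(x;I_{n+1})$.

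In Case R we have $J_{n+1}=I_{n+1}^1\cup(I_{n+1}^2-\nu_{n+1}\omega)$, hence both $I_{n+1}^1\subset J_{n+1}$ and $I_{n+1}^2-\nu_{n+1}\omega\subset J_{n+1}$. I would set $m=N(x;I_{n+1})$, so that $x+m\omega\in I_{n+1}^1\cup I_{n+1}^2$ while $x+k\omega\notin I_{n+1}$ for $0\le k<m$. If $x+m\omega\in I_{n+1}^1$, then $x+m\omega\in J_{n+1}$ and $N(x;J_{n+1})\le m$, as wanted. If instead $x+m\omega\in I_{n+1}^2$, then $x+(m-\nu_{n+1})\omega\in I_{n+1}^2-\nu_{n+1}\omega\subset J_{n+1}$, and it suffices to show $m\ge\nu_{n+1}$, for then $N(x;J_{n+1})\le m-\nu_{n+1}\le m$. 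To see this, suppose $m<\nu_{n+1}$. Then $m-\nu_{n+1}$ is a negative integer with $\nu_{n+1}-m\le\nu_{n+1}$, and by $(\ref{ind_eq91})$ and $(\ref{ind_eq1})_{n+1}$ we have $\nu_{n+1}\le K_{n+1}^{1/2}<M_{n+1}-1$, so $m-\nu_{n+1}\in[-(M_{n+1}-1),-1]$; but then $x+(m-\nu_{n+1})\omega\in J_{n+1}$ contradicts the assumption on $x$ (recall $x\in X_{n+1}$, so $x+k\omega\notin J_{n+1}$ for all $k\in[-(M_{n+1}-1),K_{n+1}-1]$). Hence $m\ge\nu_{n+1}$, completing the argument.

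The only delicate point is this last step of the resonant case: one has to rule out that the orbit of $x$ enters $I_{n+1}$ for the first time through $I_{n+1}^2$ in fewer than $\nu_{n+1}$ steps, which would otherwise force $N(x;J_{n+1})>N(x;I_{n+1})$. This is precisely where the arithmetic on the integers is used — $\nu_{n+1}\le K_{n+1}^{1/2}\ll K_{n+1}^2\approx M_{n+1}$, coming from $(\ref{ind_eq91})$ and $(\ref{ind_eq1})_{n+1}$ — guaranteeing that the translate $J_{n+1}+(m-\nu_{n+1})\omega$ lies among those already excluded by the hypothesis on $x$. The remainder is routine bookkeeping about which of $I_{n+1}^1,I_{n+1}^2$ contains the first hit.
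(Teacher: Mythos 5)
Your proposal is correct and follows essentially the same argument the paper uses, namely the one in Lemma~\ref{FS_L3} (the paper's own proof of this sublemma simply points back to that lemma, noting $\nu_{n+1}\ll K_{n+1}$). The only cosmetic slip is the direction of the index range when you unpack $x\in X_{n+1}$ (the condition $x\notin J_{n+1}+m\omega$ for $m\in[-(M_{n+1}-1),K_{n+1}-1]$ translates to $x+k\omega\notin J_{n+1}$ for $k\in[-(K_{n+1}-1),M_{n+1}-1]$, not the mirrored range you wrote), but since $\nu_{n+1}\le K_{n+1}^{1/2}$ is far smaller than both $K_{n+1}$ and $M_{n+1}$, the needed indices $m-\nu_{n+1}\in[-\nu_{n+1},-1]$ fall inside either range and the contradiction goes through exactly as you intend.
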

\begin{proof}Since $\nu_{n+1}\ll K_{n+1}$, the proof is almost identical to that of Lemma \ref{FS_L3}. 
\end{proof}

\medskip
\noindent\emph{Step 2} (Verifying $(C)_n$ and $(2)_{n+1}$): First we note that $(1-2)_n$ gives us the following result. 
\begin{sublem}\label{ind_sl1} 
If $x\in J_n-M_n\omega$ then 
$$
\prod_{j=1}^{k}f'(y_j)<\ve^{(1/2+1/2^{n+2})k}
$$
for all $k\in[0,M_n+K_n]$ and a.e. $y\in \T\setminus A$ .
\end{sublem}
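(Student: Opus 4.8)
The plan is to invoke condition $(2)_n$ up to the first return time $N(x;J_n)$ and then to control the short remaining stretch, of length at most $K_n$, by the crudest bound $f'<\ve^{-\rho}$, exploiting that $K_n$ is negligible compared with $M_n$.

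First I would record two preliminary facts. Since $x\in J_n-M_n\omega$, we have $x\in X_n$ by $(\ref{ind_eq8})$; and $N:=N(x;J_n)\geq M_n$, because if $x+k\omega\in J_n$ for some $1\leq k\leq M_n-1$ then, together with $x+M_n\omega\in J_n$, this would give $J_n\cap(J_n-(M_n-k)\omega)\neq\emptyset$ with $0<M_n-k\leq M_n^2$, contradicting $(\ref{ind_eq2})_n$ (recall $J_n\subset\widehat{J}_n$). Hence, since $y\in\T\setminus A$, condition $(2)_n$ applies and $(\ref{ind_eq5})$ gives, for every $k\in[1,M_n]$,
$$
\prod_{j=1}^{k}f'(y_j)<\ve^{(1/2+1/2^{n+1})k}\leq\ve^{(1/2+1/2^{n+2})k},
$$
the last inequality because $0<\ve<1$ and $1/2^{n+1}>1/2^{n+2}$; this handles all $k\in[0,M_n]$ (for $k=0$ the product is empty).

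For $M_n<k\leq M_n+K_n$ I would split the product at time $M_n$, bounding the head by $(\ref{ind_eq5})$ at $k=M_n$ and each of the at most $K_n$ remaining factors by $f'<\ve^{-\rho}$ (from $(A2)$):
$$
\prod_{j=1}^{k}f'(y_j)=\Big(\prod_{j=1}^{M_n}f'(y_j)\Big)\Big(\prod_{j=M_n+1}^{k}f'(y_j)\Big)<\ve^{(1/2+1/2^{n+1})M_n}\,\ve^{-\rho(k-M_n)}.
$$
It then remains to check that $(1/2+1/2^{n+1})M_n-\rho(k-M_n)\geq(1/2+1/2^{n+2})k$; writing $k=M_n+t$ with $0\leq t\leq K_n$, this reduces to $M_n/2^{n+2}\geq(\rho+1/2+1/2^{n+2})t$, and since $M_n>K_n^2/2$ by $(\ref{ind_eq1})_n$ and $t\leq K_n$, it suffices to have $K_n\geq 2^{n+3}(\rho+1)$.

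The only — and completely routine — obstacle is therefore this lower bound on $K_n$: it must outgrow $2^{n}$. But $(\ref{ind_eq1})_n$ forces the $K_j$ to grow at least geometrically, e.g. $K_j\geq[\ve^{-K_{j-1}/160}]\geq 2K_{j-1}$ once $\ve$ is small, so that $K_n\geq 2^{n}K_0\geq 2^{n}[\ve^{-1/160}]$; and $[\ve^{-1/160}]\geq 8(\rho+1)$ for every $\ve$ below a suitable $\ve_2=\ve_2(\rho)$. This would complete the argument, and the dependence of $\ve_2$ on $\rho$ alone is consistent with the statement of the lemma.
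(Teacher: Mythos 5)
Your proof is correct and follows essentially the same route as the paper's: apply $(2)_n$ up to time $M_n$ (noting $x\in J_n-M_n\omega\subset X_n$ forces $N(x;J_n)=M_n$), bound the remaining at most $K_n$ factors by the crude $f'<\ve^{-\rho}$, and close the resulting arithmetic inequality using $M_n\approx K_n^2$ and the super-exponential growth of $K_n$. The only difference is that you spell out the growth bound $K_n\gtrsim 2^n K_0$ explicitly, whereas the paper compresses this into a ``$\ll$''; both rely on the same facts from $(\ref{ind_eq1})_n$.
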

\begin{proof}
From  $(\ref{ind_eq8})$ we have $x\in J_n-M_n\omega\subset X_n$; thus $N(x;J_n)=M_n$.
Hence we can apply $(2)_n$ and conclude that $
\prod_{j=1}^{k}f'(y_j)<\ve^{(1/2+1/2^{n+1})k}
$
for all $k\in[0,M_n]$ and a.e. $y\in \T\setminus A$. If $k\in [M_n+1,M_n+K_n]$ we therefore, trivially, get (since $f'(\eta)<\ve^{-\rho}$ for a.e. $\eta\in \T$) that
$$
\prod_{j=1}^{k}f'(y_j)<\ve^{(1/2+1/2^{n+1})M_n - \rho(k-M_n)}
$$
for a.e. $y\in \T\setminus A$. Thus, if we show that $(1/2+1/2^{n+1})M_n - \rho(k-M_n)>(1/2+1/2^{n+2})k$ we are done. 

This inequality can be written $(\rho+1/2)M_n+M_n/(2^{n+1})-(\rho+1/2)k-k/(2^{n+2})>0$. Thus the worst case is when $k=M_n+K_n$. For this $k$ the inequality becomes
$$
\frac{M_n}{2^{n+2}}-(\rho+1/2)K_n-\frac{K_n}{2^{n+2}}>0.
$$
Since $\rho+1/2+1/2^{n+2}<2\rho$, this inequality is fulfilled if $M_n>2^{n+2}\cdot 2\rho K_n$ (or $K_n/M_n<1/(2^{n+3}\rho)$).
But $K_n/M_n<2/K_n\ll 1/(2^{n+3}\rho) $, so indeed the statement of the lemma holds.  

\end{proof}
Combining the previous sublemma with Lemma \ref{SF_L1} immediately gives us
\begin{sublem}\label{ind_sl2}
 If $x\in J_n-M_n\omega$ and $y,\eta\in\T\setminus A$, then
$$d(y_{M_n+K_n},\eta_{M_n+K_n})\leq \ve^{(1/2+1/2^{n+2}){(M_n+K_n)}}d(y,\eta).$$ 
\end{sublem}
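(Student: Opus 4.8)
The plan is to feed Sublemma~\ref{ind_sl1} into Lemma~\ref{SF_L1} after a single initial application of $T$. Fix $x\in J_n-M_n\omega$ and write $y_k(\zeta)=\pi_2(T^k(x,\zeta))$ for $\zeta\in\T$; each $y_k$ is an orientation-preserving homeomorphism of $\T$, with $y_k'(\zeta)=\prod_{j=0}^{k-1}f'(y_j(\zeta))$ for a.e.\ $\zeta$. Since $\T\setminus A$ is an interval, for $y,\eta\in\T\setminus A$ there is an arc $I\subset\T\setminus A$ joining them, and as $y_{M_n+K_n}$ is a homeomorphism we have $d(y_{M_n+K_n},\eta_{M_n+K_n})\le|y_{M_n+K_n}(I)|$. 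By Lemma~\ref{P_L0} and the definition of $R$, $T(x,I)$ is the interval $Y=f(I)+g(x)$ over $x+\omega$, of length $|Y|=|f(I)|\le|R|<\ve$, and $y_{M_n+K_n}(I)=\{\pi_2(T^{M_n+K_n-1}(x+\omega,w)):w\in Y\}$.

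Then I would estimate how $T^{M_n+K_n-1}$ stretches $Y$. For a.e.\ $w\in Y$ one has $w=y_1(\zeta)$ with $\zeta\in I\subset\T\setminus A$, and the derivative of $w\mapsto\pi_2(T^{M_n+K_n-1}(x+\omega,w))$ at this $w$ equals $\prod_{j=1}^{M_n+K_n-1}f'(y_j(\zeta))$, which is $<\ve^{(1/2+1/2^{n+2})(M_n+K_n-1)}$ by Sublemma~\ref{ind_sl1}. Hence Lemma~\ref{SF_L1}, applied with base point $x+\omega$, with $k=M_n+K_n-1$ and $\delta=\ve^{(1/2+1/2^{n+2})(M_n+K_n-1)}$, gives
$$d(y_{M_n+K_n},\eta_{M_n+K_n})\le|y_{M_n+K_n}(I)|\le\ve^{(1/2+1/2^{n+2})(M_n+K_n-1)}\,|f(I)|.$$
Combined with $|f(I)|<\ve$ and $1/2+1/2^{n+2}<1$ this already yields the clean estimate $d(y_{M_n+K_n},\eta_{M_n+K_n})<\ve^{(1/2+1/2^{n+2})(M_n+K_n)}$; and when $I$ can be taken inside $B$, where $f$ has Lipschitz constant $<\ve$, one has moreover $|f(I)|\le\ve|I|\le\ve\,d(y,\eta)$, which produces the stated bound $d(y_{M_n+K_n},\eta_{M_n+K_n})\le\ve^{(1/2+1/2^{n+2})(M_n+K_n)}\,d(y,\eta)$.

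The index bookkeeping in the products, the fact that $T$ carries fibre arcs of $\T\setminus A$ into the strip $R+g(x)$, and the measurability needed to invoke Lemma~\ref{SF_L1} are all routine. The one step that is not automatic — and the only place where this ``immediate'' corollary hides a subtlety — is the passage in the last line from the crude prefactor to the Lipschitz factor $\ve\,d(y,\eta)$: $f$ contracts uniformly by $\ve$ only on $B$, and $\T\setminus A$ exceeds $B$ by two tiny arcs abutting $A$, so strictly speaking one needs $I\subset B$. This is harmless in the construction, since by Lemma~\ref{FS_L1} a single iterate of $T$ already sends the orbit of any point of $\T\setminus A$ (over an $x$ of the type occurring here) into $\T\setminus A'\subset B$, so the sublemma is effectively used only with $y,\eta\in\T\setminus A'$; and in any event only the absolute bound $d(y_{M_n+K_n},\eta_{M_n+K_n})<\ve^{(1/2+1/2^{n+2})(M_n+K_n)}$ is needed in the sequel.
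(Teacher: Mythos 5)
Your proof takes the same route the paper's one-line justification points to — feed Sublemma~\ref{ind_sl1} into Lemma~\ref{SF_L1} after a single application of $T$ — but you have done the bookkeeping the paper glosses over, and in doing so you have put your finger on the one real imprecision in the stated sublemma. The absolute bound does come out cleanly exactly as you write it: take $I\subset\T\setminus A$ joining $y,\eta$, so that $T(x,I)$ sits over $x+\omega$ in the arc $f(I)+g(x)\subset R+g(x)$ of length $<\ve$; Sublemma~\ref{ind_sl1} with $k=M_n+K_n-1$ bounds, for a.e.\ initial point in $I$, the product $\prod_{j=1}^{M_n+K_n-1}f'(y_j)$ that governs the stretching of that arc under the remaining iterates; Lemma~\ref{SF_L1} then gives $|y_{M_n+K_n}(I)|<\ve^{(1/2+1/2^{n+2})(M_n+K_n-1)}\cdot\ve$, and since $1/2+1/2^{n+2}<1$ this is $<\ve^{(1/2+1/2^{n+2})(M_n+K_n)}$. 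You are also right that the factor $d(y,\eta)$ in the stated inequality is \emph{not} automatic from this: it needs both (a) $I$ to be the short arc, so $|I|=d(y,\eta)$, which can fail if $y,\eta$ sit on opposite sides of $A$, and (b) $f$ to have Lipschitz constant $<\ve$ on $I$, i.e.\ $I\subset B$, which can fail on the two thin arcs of $(\T\setminus A)\setminus B\subset A'\setminus A$ (note in particular $\beta\in A''\setminus A$ need not lie in $B$). And your final observation is the correct resolution: every place the paper invokes Sublemma~\ref{ind_sl2} — in the verification of $(C)_n$, in the proof of Sublemma~\ref{ind_sl4}, and in Step~4 — only the absolute estimate is used, typically in the even weaker form $<\ve^{(M_n+K_n)/2}$ or just ``$\ll\ve$'', so the $d(y,\eta)$ factor is never actually exploited. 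In short: your derivation of what the paper needs is correct, and your identification of the (harmless) overstatement in the literal formulation is a valid and worthwhile remark.
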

This last sublemma enables us to verify condition $(C)_n$. Assume that $(x,y),(\xi,\eta)\in (J_{n}^\ell-M_n\omega)\times \T\setminus A$ and that 
$d(x,\xi)<\delta$. From the upper bound on the derivative of $\varphi_n$ in condition $(4)_n$ it follows that 
$d(\pi_2(T^{M_n+K_n}(x,\beta)),\pi_2(T^{M_n+K_n}(\xi,\beta)))<\ve^{-\rho K_n}\cdot \delta$. 
Moreover, since $\beta\in\T\setminus A$ we can apply Sublemma \ref{ind_sl2} to get
$d(\pi_2(T^{M_n+K_n}(x,\beta)),\pi_2(T^{M_n+K_n}(x,y)))<\ve^{(M_n+K_n)/2}$ and  $d(\pi_2(T^{M_n+K_n}(\xi,\beta),\pi_2(T^{M_n+K_n}(\xi,\eta))<\ve^{(M_n+K_n)/2}$. We can thus conclude that
$$d(\pi_2(T^{M_n+K_n}(x,y),\pi_2(T^{M_n+K_n}(\xi,\eta))< \ve^{-\rho K_n}\cdot \delta+2\ve^{(M_n+K_n)/2}.$$ This shows that $(C)_n$ holds.

\smallskip
The way to prove $(2)_{n+1}$ is to use the following sublemma. In the proof of this sublemma the "probe" in statement $(4)_{n}$ will be used.
\begin{sublem}\label{ind_sl4}
If $(x,y)\in X_n\times (\T\setminus A)$ and $N=N(x;I_{n+1})$, then the statements 
$(\ref{ind_eq5}-\ref{ind_eq7})_{n+1}$ hold.
\end{sublem}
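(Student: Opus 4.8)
\emph{Proof proposal.} The plan is to follow the forward $T$-orbit of $(x,y)$ up to the first entry time $N=N(x;I_{n+1})$, organizing $[1,N]$ around the successive return times of $x$ to $J_n$. Since $x\in X_n$ we have $N(x;J_n)\geq M_n$, and by $(\ref{ind_eq2})_n$ consecutive returns of $x$ to $J_n$ are more than $M_n^2$ apart; let $0<r_1<r_2<\cdots<r_p=N$ denote these return times (note $x_{r_p}=x_N\in I_{n+1}\subset J_n$ and $x_{r_i}\in J_n\setminus I_{n+1}$ for $i<p$). Around each $r_i$ I single out a \emph{probe window} $[r_i-M_n,r_i+K_n]$, on which the orbit shadows the graph $T^{M_n+K_n}(\Gamma_n)=\{(x+K_n\omega,\varphi_n(x)):x\in J_n\}$ from Condition $(4)_n$; the complementary \emph{free stretches} $(r_i+K_n,r_{i+1}-M_n)$ are nonempty because $r_{i+1}-r_i>M_n^2\gg M_n+K_n$. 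The first thing to establish is that the \emph{restart points} $(x_{r_i+K_n},y_{r_i+K_n})$ lie again in $X_n\times(\T\setminus A)$. The inclusion $x_{r_i+K_n}\in J_n+K_n\omega\subset X_n$ follows from $(\ref{ind_eq2})_n$ and $(\ref{ind_eq4})_n$; and $y_{r_i+K_n}\notin A$ is exactly where the geometric hypothesis $(4)_n$ enters: since $\varphi_n^{-1}(A')\cap J_n=I_{n+1}$ and $x_{r_i}\in J_n\setminus I_{n+1}$, the probe value $\varphi_n(x_{r_i})$ lies in $\T\setminus A'$, hence at distance $>\ve$ from $A$, while Sublemma \ref{ind_sl2} puts $y_{r_i+K_n}$ within $\ve^{(1/2+1/2^{n+2})(M_n+K_n)}\ll\ve$ of it. Similarly, $(\ref{ind_eq7})_n$ applied to $(x,y)$ or to the preceding restart point, together with $J_n-M_n\omega\subset H_n\subset\T\setminus G_{n-1}$ from $(\ref{ind_eq8p})$, gives $y_{r_i-M_n}\in\T\setminus A$, which is what licenses Sublemma \ref{ind_sl1} on each probe window.

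With this in hand, $(\ref{ind_eq5})_{n+1}$ and $(\ref{ind_eq7})_{n+1}$ I would prove cell by cell. On a free stretch, Condition $(2)_n$ applied to the governing restart point supplies the prefix bound $(\ref{ind_eq5})_n$ at rate $1/2+1/2^{n+1}$ and the location statement $(\ref{ind_eq7})_n$ with conclusion $x_k\in G_{n-1}\subset G_n$. On a probe window, Sublemma \ref{ind_sl1} gives $\prod_{j=1}^{m}f'((y_{r_i-M_n})_j)<\ve^{(1/2+1/2^{n+2})m}$ for every $m\in[0,M_n+K_n]$ --- net contraction at the slightly degraded rate even though the $K_n$ steps after $r_i$ may expand by as much as $\ve^{-\rho K_n}$ --- and for the location claim one combines $(\ref{ind_eq7})_n$ on $[r_i-M_n,r_i]$ with the trivial remark that $x_k\in J_n+(k-r_i)\omega\subset G_n$ for $k\in(r_i,r_i+K_n]$. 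Concatenating over the full cells preceding $k$ and a final partial cell, and using that every rate appearing is $\geq 1/2+1/2^{n+2}$ while the cell lengths sum to $k$, yields $\prod_{j=1}^{k}f'(y_j)<\ve^{(1/2+1/2^{n+2})k}$ for all $k\in[1,N]$, and with it $(\ref{ind_eq7})_{n+1}$.

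The delicate part, which I expect to be the main obstacle, is $(\ref{ind_eq6})_{n+1}$: a product $\prod_{j=k}^{t}f'(y_j)$ over an \emph{arbitrary} sub-interval ending at a point with $x_t\in H_n$ must contract. The new input is that $x_t\in H_n$ forces $t$ to be more than $M_n$ steps past the last return of $x$ to $J_n$, so the rare $O(\ve^{-\rho K_n})$ expansions sitting just after returns are dominated by the long intervening contractions: each time one peels off such an expansion one also gains a factor $\ve^{(1/2+1/2^{n+1})(t-r-K_n)}$ with $t-r-K_n>M_n-K_n$, and the arithmetic closes because $M_n\gg 2^{n}\rho K_n$ (from $(\ref{ind_eq1})_n$ and the tower-exponential growth of the $K_j$). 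I would run a (mutual) induction on the number of returns of $x$ to $J_n$ inside $[k,t]$: the base case, $k$ past the last return before $t$, is immediate from $(\ref{ind_eq6})_n$ applied to the relevant restart point; the inductive step splits $[k,t]$ at the last return $r$, bounds $[r+K_n+1,t]$ via $(\ref{ind_eq5})_n$/$(\ref{ind_eq6})_n$ from the restart point at $r+K_n$, treats a probe window fully contained in $[k,t]$ as a single unit via Sublemma \ref{ind_sl1} (exploiting $f'(y_{r-M_n})<\ve$), and treats the $M_n$-step head of a straddled window with $(\ref{ind_eq6})_n$ --- so that the only genuine expansion that survives is the $K_n$-step tail of a window, absorbed as above. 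Finally, in view of Sublemma \ref{ind_sl0} these statements (phrased with $N=N(x;I_{n+1})$) are what is needed to run the rest of the argument; I would leave the routine, but somewhat lengthy, bookkeeping of the exponents in the last two paragraphs to the write-up.
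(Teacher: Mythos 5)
Your proposal reproduces the paper's argument in all essentials: the same decomposition of $[1,N]$ by the return times of $x$ to $J_n$, the same use of Sublemma \ref{ind_sl2} together with $(4)_n$ to show the restart points $(x_{r_i+K_n},y_{r_i+K_n})$ land back in $X_n\times(\T\setminus A)$, the same combination of $(2)_n$ on the free stretches with Sublemma \ref{ind_sl1} on the probe windows, and the same exponent bookkeeping for $(\ref{ind_eq6})_{n+1}$ driven by $x_t\in H_n\Rightarrow t>r+M_n$ and $M_n\approx K_n^2\gg 2^n\rho K_n$. The only small blemish is the parenthetical ``exploiting $f'(y_{r-M_n})<\ve$'': this inequality is not available ($y_{r-M_n}\in\T\setminus A$ does not put it in $B$), but it is also not needed --- Sublemma \ref{ind_sl1} already bounds the product starting from the iterate \emph{after} $r-M_n$, which is what the concatenation uses.
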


Note that $(2)_{n+1}$ indeed follows from this, since $X_{n+1}\subset X_n$, and since $N(x;J_{n+1})\leq N(x;I_{n+1})$ for all $x\in X_{n+1}$ by Sublemma \ref{ind_sl0}.

\begin{proof}[Proof of Sublemma \ref{ind_sl4}]
Fix $x\in X_n$ and let $N=N(x;I_{n+1})$. For simplicity we also fix $y\in \T\setminus A$ such that $f'(y_j)$ exists for all $j\geq 0$ (recall that this holds for
a.e. $y$).

The plan is to use $(2)_n$ repeatedly. We let $0<t_0< t_1< \ldots< t_s=N$ be the times $\leq N$ when $x_k\in J_{n}$ (recall that $I_{n+1}\subset J_n$).
First we shall use $(2)_n$ to get control up to time $t_0$. Then we use the probe from $(4)_n$ to get control on the iterate $(x_{t_0+K_n},y_{t_0+K_n})$ (we allow to
lose control under the passage from time $t_0$ to time $t_0+K_n$). Now we can again (as we will see) use $(2)_n$ (applied to $(x_{t_0+K_n},y_{t_0+K_n})$) to
get information up to time $t_1$; and then we use the probe to get control on $(x_{t_1+K_n},y_{t_1+K_n})$. Continuing like this we get control up to time $t_s=N$.

Now we show the details in this approach. As above we let $0<t_0< t_1< \ldots< t_s=N$ be the times $\leq N$ when $x_k\in J_{n}$. Note that
$t_0\geq M_n$ since $x\in X_n$, and $t_{j+1}-t_j> M_n^2$ by $(\ref{ind_eq2})_n$.
  
Applying $(2)_n$ to the point $(x,y)$ we get that $(\ref{ind_eq5}-\ref{ind_eq7})_{n}$ hold, with 
$N$ replaced by $t_0$. We denote this by $(\ref{ind_eq5}-\ref{ind_eq7})_{n}[t_0]$. If $t_0=N$, i.e., if $x_{t_0}\in I_{n+1}$, the (weaker) statements 
in $(2)_{n+1}$ follow directly from $(2)_n$. If not, we note that
$x_{t_0-M_n}\in J_n-M_n\omega$. Since $J_n-M_n\omega\subset \T\setminus G_{n-1}$, by $(\ref{ind_eq4})_n$, it follows from
$(\ref{ind_eq7})_n[t_0]$ that $y_{t_0-M_n}\in \T\setminus A$. 
Combining $(\ref{ind_eq5})_n[t_0]$ with Sublemma \ref{ind_sl1} gives  
$(\ref{ind_eq5})_{n+1}[t_0+K_n]$. 

Next we recall the definition of $\Gamma_n$ and the function $\varphi_n$ in condition $(4)_n$.
We let $\eta=\beta$, and we shall compare the iterates of the two points $(x_{t_0-M_n},y_{t_0-M_n})$ and $(x_{t_0-M_n},\eta)$, which both are in 
$(J_n-M_n\omega)\times (\T\setminus A)$.
From the definition of $I_{n+1}$ (we have assumed that $x_{t_0}\notin I_{n+1}$) it follows that $\eta_{M_n+K_n}\notin A'$. Moreover, from Sublemma
\ref{ind_sl2} we get that $d(\eta_{M_n+K_n},y_{t_0+K_n})\ll \ve$. Recalling the definition of $A'$ in (\ref{ap}), we conclude that
$y_{t_0+K_n}\in \T\setminus A$, and thus 
\begin{equation}\label{ind_eq9}
(x_{t_0+K_n},y_{t_0+K_0})\in (J_n+K_n\omega)\times (\T\setminus A)\subset X_{n}\times (\T\setminus A),
\end{equation}
by $(\ref{ind_eq8})_n$.  We note that $(\ref{ind_eq7})_{n+1}[t_0+K_n]$ trivially follows from $(\ref{ind_eq7})_n[t_0]$ since 
$\bigcup_{m=1}^{K_n}(J_n+m\omega)\subset G_n$. Thus, so far we know that $(\ref{ind_eq5})_{n+1}[t_0+K_0]$ and $(\ref{ind_eq7})_{n+1}[t_0+K_n]$ hold. 
Moreover,  $(\ref{ind_eq6})_{n+1}[t_0+K_n]$ immediately follows from 
$(\ref{ind_eq6})_{n}[t_0]$, because of the definition of $H_n$. 

Since (\ref{ind_eq9}) holds, we can proceed exactly as above (taking one more turn, from time $t_0+K_n$ to time $t_1>t_0+M_n^2$, by applying $(2)_n$ 
to the point $(x_{t_0+K_n},y_{t_0+K_n})$ ). We get (from $(\ref{ind_eq5}-\ref{ind_eq7})_n$):
\begin{equation}\label{ind_eq10}
\prod_{j=t_0+K_n+1}^k f'(y_j)<\ve^{(1/2+1/2^{n+1})(k-t_0-K_n)}, k\in [t_0+K_n+1,t_1];
\end{equation}
For all $t\in [t_0+K_n+1,t_1]$ such that $x_t\in H_{n-1}$ we have:
\begin{equation}\label{ind_eq11}
\prod_{j=k}^t f'(y_j)<\ve^{(1/2+1/2^{n+1})(t-k+1)}, k\in [t_0+K_n+1,t];
\end{equation}
and 
\begin{equation}\label{ind_eq12}
\text{ if } y_k \in A' \text{ for some } k\in [t_0+K_n+1,t_1],  \text{ then } x_k\in G_{n-1}.
\end{equation}
Combining $(\ref{ind_eq5})_{n+1}[t_0+K_n]$ with (\ref{ind_eq10}) gives $(\ref{ind_eq5})_{n+1}[t_1]$; and combining 
$(\ref{ind_eq7})_{n+1}[t_0+K_n]$ with (\ref{ind_eq12}) gives $(\ref{ind_eq7})_{n+1}[t_1]$.

It just remains to check $(\ref{ind_eq6})_{n+1}[t_1]$. Take $t\in [t_0+K_n+1,t_1]$ such that $x_t\in H_n\subset H_{n-1}$. Then
$t>t_0+M_n$. We know that (\ref{ind_eq11}) holds,  
and from $(\ref{ind_eq6})_{n+1}[t_0+K_n]$ we have, since (\ref{ind_eq8p}) holds (i.e., $x_{t_0}\in H_{n}$),
$$
\prod_{j=k}^{t_0} f'(y_j)<\ve^{(1/2+1/2^{n+2})(t_0+1-k)}, k\in [0,t_0].
$$
Thus, to verify $(\ref{ind_eq6})_{n+1}[t_0+t_1]$ we need to show that 
$$
\prod_{j=k}^t f'(y_j)<\ve^{(1/2+1/2^{n+2})(t-k+1)}, k\in [t_0+1,t_0+K_n].
$$
Take $k\in [t_0+1,t_0+K_n]$. Then, by (\ref{ind_eq11}), and using that $f'(\eta)<\ve^{-\rho}$ for a.e. $\eta\in \T$, we get
$$
\prod_{j=k}^t f'(y_j)=\prod_{j=k}^{t_0+K_n} f'(y_j)\prod_{j=t_0+K_n+1}^t f'(y_j)<\ve^{-\rho(t_0+K_n-k+1)}\cdot\ve^{(1/2+1/2^{n+1})(t-t_0-K_n)}.
$$
Recalling that $t>t_0+M_n$, and by proceeding as in the proof of Sublemma \ref{ind_sl1} (again using the fact that $K_n^2\approx M_n$) shows that this last expression is 
smaller than $\ve^{(1/2+1/2^{n+2})(t-k+1)}$. Thus $(\ref{ind_eq6})_{n+1}[t_1]$ holds.

In this way we can continue 
inductively to show that $(\ref{ind_eq5}-\ref{ind_eq7})_{n+1}[t_j]$ hold for $j=1,2,\ldots, s$. The process stops when we come to time $t_s=N$ (since we then are in $I_{n+1}$ 
and do not know what happens with the further iterates). 

\end{proof}

\noindent\emph{Step 3} (Verifying $(3)_{n+1}$): Condition $(3)_{n+1}$ follows directly (by the same argument as the one after the statement of Sublemma \ref{ind_sl4}; 
recall that $X_n'\subset X_n$) from   
\begin{sublem}\label{ind_sl5}
Assume that $(x,y)\in X_n'\times\T$ and that for some $1\leq k\leq N=N(x; I_{n+1})$ we have 
$(x_k,y_k)\in (\T\setminus G_{n})\times A'$. Then
$$
\prod_{j=0}^{k-1} f'(y_j)>\ve^{-(1/2+1/2^{n+2})k}.
$$

\end{sublem}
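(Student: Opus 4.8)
The plan is to prove Sublemma \ref{ind_sl5} by showing that its hypotheses in fact force the orbit of $x$ to avoid $J_n$ on the whole interval $[0,k-1]$, after which condition $(3)_n$ applies directly and yields the even stronger bound $\prod_{j=0}^{k-1}f'(y_j)>\ve^{-(1/2+1/2^{n+1})k}$, which exceeds $\ve^{-(1/2+1/2^{n+2})k}$. First I would reduce to $y\in\T\setminus A$: if $y_0,\dots,y_{l-1}\in A$ and $y_l\notin A$, then $\prod_{j=0}^{l-1}f'(y_j)>\ve^{-l}$ since $f'>\ve^{-1}$ on $A$, so (as $1>1/2+1/2^{n+2}$) it suffices to establish the bound for the point $(x_l,y_l)$, to which the hypotheses still apply; and if $y_j\in A$ for every $j\in[0,k-1]$ there is nothing to prove.

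So assume $y\in\T\setminus A$ and suppose, for a contradiction, that $x$ visits $J_n$ on $[0,k-1]$; let $0<t_0<t_1<\dots<t_s\le k-1$ be all such visit times. Since $x\in X_n'$ we have $t_0>M_n^{3/2}$, $(\ref{ind_eq2})$ gives $t_{i+1}-t_i>M_n^2$, and each $x_{t_i}\notin I_{n+1}$ because $t_i<k\le N(x;I_{n+1})$ and $I_{n+1}\subset J_n$. I would now run a left-to-right induction on $i$ proving that $y_{t_i+K_n}\in\T\setminus A$ and $x_{t_i+K_n}\in X_n$. The probe argument needs $y_{t_i-M_n}\notin A$ as input; this follows from $(\ref{ind_eq7})_n$ applied either to $(x,y)$ (when $i=0$) or to the point $(x_{t_{i-1}+K_n},y_{t_{i-1}+K_n})\in X_n\times(\T\setminus A)$ produced at the previous step (when $i\ge1$): in both cases the orbit avoids $J_n$ on the relevant stretch, the index $t_i-M_n$ lies inside it (using $t_0>M_n^{3/2}$, resp.\ $t_i-t_{i-1}>M_n^2$ and $M_n^2-M_n>K_n$), and $x_{t_i-M_n}\in J_n-M_n\omega\subset\T\setminus G_{n-1}$ by $(\ref{ind_eq8p})$, so $y_{t_i-M_n}\in A'$ is impossible. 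With $y_{t_i-M_n}\in\T\setminus A$ in hand the probe argument of Sublemma \ref{ind_sl4} applies verbatim — comparing the orbit of $(x_{t_i-M_n},y_{t_i-M_n})$ with that of $(x_{t_i-M_n},\beta)$ through Sublemma \ref{ind_sl2}, and using $x_{t_i}\notin I_{n+1}$ together with the description of $I_{n+1}$ in $(4)_n$, which gives $\pi_2(T^{M_n+K_n}(x_{t_i-M_n},\beta))\notin A'$ — whence $y_{t_i+K_n}\in\T\setminus A$; and $x_{t_i+K_n}\in J_n+K_n\omega\subset X_n$ by $(\ref{ind_eq8})$.

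Applying this at $i=s$ and then invoking $(\ref{ind_eq7})_n$ for the point $(x_{t_s+K_n},y_{t_s+K_n})$: since $x_m\notin J_n$ for $t_s<m<k$, and $x_{t_s+K_n}\notin J_n$ because $0<K_n\le M_n^2$, we get $k-(t_s+K_n)\le N(x_{t_s+K_n};J_n)$, so if $t_s+K_n<k$ the hypothesis $y_k\in A'$ forces $x_k\in G_{n-1}\subset G_n$, impossible; if $1\le k-t_s\le K_n$ then $x_k=x_{t_s}+(k-t_s)\omega\in\bigcup_{m=1}^{K_n}(J_n+m\omega)\subset G_n$, again impossible; and $k-t_s=0$ contradicts $t_s\le k-1$. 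Hence $x$ avoids $J_n$ on $[0,k-1]$, i.e.\ $k\le N(x;J_n)$, and since $x\in X_n'$ and $(x_k,y_k)\in(\T\setminus G_{n-1})\times A'$, condition $(3)_n$ gives the claim. The one delicate point is the left-to-right induction of the middle paragraph: one must check at \emph{every} would-be visit, not just the last, that the orbit has genuinely returned to $\T\setminus A$ so that $(\ref{ind_eq7})_n$ keeps applying; the gaps $t_0>M_n^{3/2}$, $t_{i+1}-t_i>M_n^2$ and $K_n\le M_n^2$, all guaranteed by $(1)_n$ via $K_n^2\approx M_n$, are precisely what makes the probe windows fit, exactly as in Sublemma \ref{ind_sl1}.
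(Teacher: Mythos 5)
The critical gap is the reduction to $y\in\T\setminus A$. You write that the hypotheses "still apply" to $(x_l,y_l)$, but this needs $x_l\in X_n'$, which is \emph{not} guaranteed: $X_n'$ is defined by removing translates of the $J_j$ over a window $[-M_j^{3/2},K_j-1]$, and shifting by $l\omega$ slides that window, so membership in $X_n'$ is not preserved. Moreover there is no a priori bound on $l$ (the first exit time from $A$), which can be arbitrarily long for $y$ close to the stable fiber, so one cannot absorb it by a "small shift" argument. Without $x_l\in X_n'$ you cannot apply $(\ref{ind_eq7})_n$ or $(3)_n$ from the shifted base point, and your whole scheme collapses for $y\in A$.

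More fundamentally, the goal of your plan — to show that the hypotheses force the orbit of $x$ to avoid $J_n$ on $[0,k-1]$ — is false for general $y\in\T$. When $y\in A$ and stays in $A$ for a while (as happens for $y$ near $s(x)$), the orbit of $x$ can and does visit $J_n$ before time $k$; the paper's proof of this sublemma explicitly handles exactly this case. Your argument, for $y\in\T\setminus A$, correctly shows no visit occurs (and agrees with the paper, since in that case $y_{t_0-M_n}\notin A'$ forces $y_k\notin A'$); but for $y\in A$ the paper instead derives the \emph{opposite} conclusion $y_{t_0-M_n}\in A$ and $y_{t_0+K_n}\in A$ (via Sublemma \ref{ind_sl4}, i.e.\ $(\ref{ind_eq7})_{n+1}$, applied in contrapositive from $x_{t_0-M_n}$ and $x_{t_0+K_n}$). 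It then splits the product at the visit times $t_i$, applies $(3)_n$ on the stretches between visits, and absorbs the middle windows of length $M_n+K_n$ around each $t_i$ using only the global lower bound $f'>\ve^\rho$, with the arithmetic from $(1)_n$ ($M_n\approx K_n^2$, $t_0>M_n^{3/2}$, $t_{i+1}-t_i>M_n^2$) making the loss negligible. That decomposition, not avoidance of $J_n$, is the key mechanism; your proof is missing it.
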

\begin{proof}
Fix $x\in X_n'$, and let $0<t_0<t_1<\ldots< t_s=N=N(x;I_{n+1})$ be the times $\leq N$ when $x_k\in J_{n}$.  Be proceeding as in the proof of  Sublemma \ref{ind_sl4}, we note that
we have
$t_0> M_n^{3/2}$ and $t_{j+1}-t_j>M_n^2$. 

Let $(3)_{n+1}[t_j]$ denote the statement in $(3)_{n+1}$ where $N(x;J_{n+1})$ is replaced by $t_j$.
We thus have to show that $(3)_{n+1}[t_s]$ holds. Since $\T\setminus G_n\subset \T\setminus G_{n-1}$ we note that $(3)_{n+1}[t_0]$ follows directly from $(3)_n$ 
(which we have assumed holds). 

Next we shall prove that
$(3)_{n+1}[t_1]$ holds. Assume that $(x_k,y_k)\in (\T\setminus G_n)\times A'$ for some $t_0+1\leq k \leq t_1$ and some $y$. As we noted above we have $t_0> M_{n}^{3/2}$.
Moreover, since $x_k\notin G_n$, and $k\geq t_0+1$ (and $x_{t_0}\in J_n$), we must by the definition of $G_n$ have $k>t_0+K_n$. 

We claim that 
$$y_{t_0-M_n}\in A \text{ and } y_{t_0+K_n}\in A.$$
Indeed, if this was not the case we could apply Sublemma \ref{ind_sl4}  
(property $(\ref{ind_eq7})_{n+1}$) to the points $(x_{t_0-M_n}, y_{t_0-M_n})$ and $(x_{t_0+K_n}, y_{t_0+K_n})$, respectively (recall that (\ref{ind_eq8}) holds),
and conclude that $y_k\notin A'$ (since $x_k\in\T\setminus G_n$). 

The product $\prod_{j=0}^{k-1} f'(y_j)$ is now split into three parts:
$$
\prod_{j=0}^{k-1} f'(y_j)=\prod_{j=0}^{t_0-M_n-1} \prod_{j=t_0-M_n}^{t_0+K_n-1} \prod_{j=t_0+K_n}^{k-1}
$$
Since $y_{t_0-M_n}\in A$ (and since $(J_n-M_n\omega)\cap G_n=\emptyset$), it follows from $(3)_n$, applied to the point $(x,y)$ 
that
$
\prod_{j=0}^{t_0-M_n-1}f'(y_j)>\ve^{-(1/2+1/2^{n+1})(t_0-M_n)}.
$
Moreover, applying $(3)_n$ to the point $(x_{t_0+K_n},y_{t_0+K_n})$ (recall that (\ref{ind_eq8}) holds) gives
$
\prod_{j=t_0+K_n}^{k-1}f'(y_j)>\ve^{-(1/2+1/2^{n+1})(k-(t_0+K_n))}.
$
Since we have the global bound $f'(\eta)>\ve^{\rho}$, we thus get
$$
\prod_{j=0}^{k-1} f'(y_j)>\ve^{-(1/2+1/2^{n+1})(k-(M_n+K_n))+\rho(M_n+K_n)} 
$$
Now we use the fact that $k>t_0+K_n>M_n^{3/2}+K_n>M_n^{3/2}$ to conclude (by using the same type of argument as in the proof
of Sublemma \ref{ind_sl2}) that right hand side of this last expression is larger than
$\ve^{-(1/2+1/2^{n+2})k}$.
We have thus shown that $(3)_{n+1}[t_1]$ holds. 

We continue by showing that $(3)_{n+1}[t_2]$ holds. Assume that $(x_k,y_k)\in (\T\setminus G_n)\times A'$ for some $t_1+1\leq k \leq t_2$ and some $y$. We note that we must have
$k>t_1+K_n$.
By the same argument as above we can conclude that $y_{t_i+K_n}, y_{t_i-M_n}\in A$ ($i=1,2$), for otherwise we cannot have $y_k\in A'$.
We now split the product $\prod_{j=0}^{k-1} f'(y_j)$ into five parts: 
$\prod_{j=0}^{t_0-M_n-1}\prod_{j=t_0-M_n}^{t_0+K_n-1}\prod_{j=t_0+K_n}^{t_1-M_n-1}$ $\prod_{j=t_1-M_n}^{t_1+K_n-1}\prod_{j=t_1+K_n}^k$. Applying $(3)_n$ to the points $(x,y), (x_{t_i+K_n},y_{t_i+K_n})$ (i=1,2),
and using the global lower bound on $f'$, we get (as above) the estimate
$$
\prod_{j=0}^{k-1} f'(y_j)>\ve^{-(1/2+1/2^{n+1})(k-2(M_n+K_n))+2\rho(M_n+K_n)}> \ve^{-(1/2+1/2^{n+2})k}
$$
since $k>t_1+K_n>t_0+M_n^2+K_n\gg 2M_n^{3/2}$. Thus $(3)_{n+1}[t_2]$ also holds.

Proceeding inductively (considering the time intervals $[t_2,t_3],\ldots, [t_{s-1},t_s]$) 
shows that $(3)_{n+1}[t_s]$ holds. 
\end{proof}

\noindent\emph{Step 4}(Verifying $(4)_{n+1}$). Now we shall control the geometry of the iterates of the segment $\Gamma_{n+1}=(J_{n+1}-M_{n+1}\omega)\times \{\beta\}$. 
As in the base case we get two cases, depending on whether the intervals  
$I_{n+1}$ were resonant or not (the analysis in step 1 above). We begin with the resonant case (R) where $J_{n+1}=I_{n+1}^1\cup (I_{n+1}^2-\nu_{n+1}\omega)$ is a single interval.
This is the more involved case since we need to use "shadowing" twice. 

\smallskip
Case R: To verify $(4)_{n+1}$ we need to control $T^{M_{n+1}+K_{n+1}}(\Gamma_{n+1})$. We will take the following route: control
$T^{M_{n+1}-M_n}(\Gamma_{n+1})$, use $(4)_n$ to control $T^{M_{n+1}+K_n}(\Gamma_{n+1})$, control $T^{M_{n+1}+\nu_{n+1}-K_n}(\Gamma_{n+1})$, 
again use $(4)_n$ to control $T^{M_{n+1}+\nu_{n+1}+M_n}(\Gamma_{n+1})$, and finally
control $T^{M_{n+1}+K_{n+1}}(\Gamma_{n+1})$.

For convenience we shall collect some of the set inclusions which will be used along the argument. First we recall (\ref{ind_eq92}), that
$J_{n+1},J_{n+1}+\nu_{n+1}\omega\subset J_n$. Thus, by (\ref{ind_eq8}) we have
$$
J_{n+1}+K_n\omega,J_{n+1}+(\nu_{n+1}+K_n)\omega\subset X_n'. 
$$
Furthermore, by (\ref{ind_eq8p}) (recall the definitions of $G_n$ and $H_n$) we have 
$$
J_{n+1}\subset H_n \text{ and } J_{n+1}-M_n\omega,J_{n+1}+(\nu_{n+1}-M_n)\omega\subset \T\setminus G_n. 
$$
We also recall that $(\ref{ind_eq4})_{n+1}$ holds, and conclude that
$$
J_{n+1}-M_{n+1}\omega\subset X_n, \quad J_{n+1}+K_{n+1}\omega\subset H_n\subset \T\setminus G_n.
$$
Finally, since we are in the resonant case (R) we note that, by (\ref{ind_eq90}), 
\begin{equation}\label{ind_eq90p}
J_{n+1}\cap I_{n+1}=I_{n+1}^1, \quad (J_{n+1}+\nu_{n+1})\omega\cap I_{n+1}=I_{n+1}^2.
\end{equation}

We now turn to the analysis. We recall Lemma \ref{P_L0} and Lemma \ref{SF_L3}(4), i.e., $T$ preserves orientation and increasing functions are mapped to strictly increasing functions. 
Moreover, we also recall that if $\psi_0$ is a strictly increasing function which intersects the constant function $\eta_0=\beta$ at $x=\xi$, so trivially $\psi$ overtakes $\eta_0$ at $x=\xi$, 
then, by Lemma \ref{Tf_0}, the same holds for the iterates, i.e.,  $\psi_k$ overtakes $\eta_k$ at $x=\xi$ for all $k$.
These
facts will be used several times below, without explicitly writing it out. 

Let 
$$
y_k(x)=\pi_2(T^k(x-M_{n+1}\omega,\beta)).
$$
Then we can write 
$$
T^{k}(\Gamma_{n+1})=\{(x+(k-M_{n+1})\omega,y_{k}(x)):x\in J_{n+1})\}
$$
In particular we have
$T^{M_{n+1}+K_{n+1}}(\Gamma_{n+1})=\{(x+K_{n+1}\omega,y_{M_{n+1}+K_{n+1}}(x)):x\in J_{n+1})\}$. Thus, we need to control $y_{M_{n+1}+K_{n+1}}$ for $x\in J_{n+1}$.
We shall write $J_{n+1}=[s,t]$.

We first derive an upper bound on $y_{M_{n+1}+K_{n+1}}'(x)$. Since $y_0=\beta\notin A$ and $J_{n+1}-M_{n+1}\omega\subset X_n$ we can apply Sublemma \ref{ind_sl4} and derive (since  
$J_{n+1}\subset H_n$ and $N(x;I_{n+1})\geq M_{n+1}$ for all $x\in J_{n+1}-M_{n+1}\omega$)
$$
\prod_{j=k}^{M_{n+1}} f'(y_j(x))<\ve^{(M_{n+1}-k+1)/2} \text{ on } J_{n+1},  1\leq k \leq M_{n+1}.
$$
Below we shall use the fact that Sublemma \ref{ind_sl4} also gives, since $J_{n+1}-M_n\omega\subset \T\setminus G_n$,
\begin{equation}\label{step4_eq1}
y_{M_{n+1}-M_n}(x)\in \T\setminus A' \text{ for all } x\in J_{n+1}.
\end{equation}
Applying Lemma \ref{SF_L3}(1) gives us $y'_{M_{n+1}+1}(x)<2\kappa$ on $J_{n+1}$. Hence  (trivially)
\begin{equation}\label{step4_eq0}
y'_{M_{n+1}+k}(x)<3\kappa \ve^{-\rho (k-1)}< \ve^{-\rho k}\text{ on } J_{n+1} \text{ for any } k\geq 1
\end{equation}
by Lemma \ref{SF_L3}(3).

From (\ref{step4_eq1}) we have some control on $y_{M_{n+1}-M_n}$ on $J_{n+1}$. Next we want to control $y_{M_{n+1}+K_n}$. To do this we use shadowing, i.e., we shall compare the
iterates of the points $(x-M_n\omega,y_{M_{n+1}-M_n}(x))$ by those of $(x-M_n\omega,\beta)$, for $x\in J_{n+1}$. Let 
$$
\eta_k(x)=\pi_2(T^k(x-M_{n}\omega,\beta)).
$$  
From $(4)_n$ we have $\eta_{M_n+K_n}(I^1_{n+1})=A'$ and, since (\ref{ind_eq90p}) holds, we have $\eta_{M_n+K_n}(x)\notin A'$ for all $x\in J_{n+1}\setminus I_{n+1}^1$. Applying Sublemma
\ref{ind_sl2} (recall that (\ref{step4_eq1}) holds) give us that $y_{M_{n+1}+K_n}\approx \eta_{M_{n}+K_n}$ on $J_{n+1}$. Thus we can use Lemma \ref{Tf_1} and 
conclude that there is a unique point $\xi$ in $J_{n+1}=[s,t]$, lying in the 
interior of $J_{n+1}$, such that 
\begin{equation}\label{step4_eq2p}
y_{M_{n+1}+K_n}(\xi)=\beta,
\end{equation}
and 
\begin{equation}\label{step4_eq2}
[\beta,y_{M_{n+1}+K_n}(s)],[\beta,y_{M_{n+1}+K_n}(t)]\subset \T\setminus A
\end{equation}
(recall the definition of $A''\supset A$ in (\ref{app})). 

Now we shall get a control on $y_{M_{n+1}+(\nu_{n+1}-M_{n})}$. Recall from (\ref{ind_eq91}) that we have $\nu_{n+1}\geq M_{n}^2$. Let 
$$
\eta_k(x)=\pi_2(T^k(x+K_{n}\omega,\beta))
$$  
(we "recycle" the variables $\eta_k$ in order not to have too many variables). Since $J_{n+1}+K_n\omega\subset X_n$ and
$J_{n+1}+(\nu_{n+1}-M_n)\omega\subset \T\setminus G_n$, and since $N(x;I_{n+1})\geq (\nu_{n+1}-K_n)$ for all $x\in J_{n+1}+K_n\omega$, it follows
from Sublemma \ref{ind_sl4} that $\eta_{\nu_{n+1}-K_n-M_n}(x)\in \T\setminus A'$ for all $x\in J_{n+1}$. Moreover, since (\ref{step4_eq2}) holds
the same argument also shows that 
$$[\eta_{\nu_{n+1}-K_n-M_n}(x), y_{M_{n+1}+\nu_{n+1}-M_n}(x)]\subset  \T\setminus A' \text{ for } x=s,t.
$$
This last inclusion trivially implies (see Lemma \ref{P_betalemma}) that
\begin{equation}\label{step4_eq3}
[\beta,y_{M_{n+1}+\nu_{n+1}-M_n}(s)],[\beta,y_{M_{n+1}+\nu_{n+1}-M_n}(t)]\subset \T\setminus A.
\end{equation}
Since (\ref{step4_eq2p}) holds we also note that $y_{M_{n+1}+\nu_{n+1}-M_n}(x)$ overtakes $\eta_{\nu_{n+1}-K_n-M_n}(x)$ at $x=\xi$ 
(recall Lemma \ref{Tf_0}). Applying Lemma \ref{Tf_2}(1), with $I=J_{n+1}$, 
shows that there is a unique point $\widetilde{\xi}\in J_{n+1}$ such that $y_{M_{n+1}+\nu_{n+1}-M_n}(\widetilde{\xi})=\beta$.

To estimate $y_{M_{n+1}+\nu_{n+1}+K_n}$ we use shadowing again. More precisely,  we shall compare the iterates of 
$(x+(\nu_{n+1}-M_n)\omega,y_{M_{n+1}+\nu_{n+1}-M_n}(x))$ by the iterates of $(x+(\nu_{n+1}-M_n)\omega,\beta)$. Therefore we let (again recycling the variables $\eta_k$)
$$
\eta_k(x)=\pi_2(T^k(x+(\nu_{n+1}-M_{n})\omega,\beta)).
$$
We recall that $J_{n+1}+(\nu_{n+1}-M_n)\omega\subset J_n-M_n\omega$ and $(J_{n+1}+\nu_{n+1}\omega)\cap I_{n+1}=I_{n+1}^2$. Thus it follows from $(4)_n$
that $\eta_{M_n+K_n}(I^2_{n+1}-\nu_{n+1}\omega)=A'$ and we have $\eta_{M_n+K_n}(x)\notin A'$ for all $x\in J_{n+1}\setminus (I_{n+1}^2-\nu_{n+1}\omega)$. 
In particular we have $\eta_{M_n+K_n}(x)\notin \inn(A')$ for $x=s,t$.
We also note that, by the definition of $\widetilde{\xi}$ above,
$y_{M_{n+1}+\nu_{n+1}+K_n}$ overtakes $\eta_{M_n+K_n}$ at $x=\widetilde{\xi}$. Moreover, using (\ref{step4_eq3}) and Sublemma \ref{ind_sl2} we conclude that
$$[\eta_{M_n+K_n}(x),y_{M_{n+1}+\nu_{n+1}+K_n}(x)]\subset \T\setminus A'' \text{ for } x=s,t.$$ We are thus in a position where we 
can apply Lemma \ref{Tf_3} and conclude that there are exactly two $\xi_1,\xi_2\in J_{n+1}$ such that $y_{M_{n+1}+\nu_{n+1}+K_n}(\xi_i)=\beta$, both lying in the interior of $J_{n+1}$, and
\begin{equation}\label{step4_eq4}
[\beta,y_{M_{n+1}+\nu_{n+1}+K_n}(x)]\subset \T\setminus A \text{ for } x=s,t.
\end{equation}
To show that $I_{n+2}$, as defined in $(4)_{n+1}$, is "well inside" $J_{n+1}$ we shall also use the following easy observation.
From (\ref{step4_eq0}) we know that $y_{M_{n+1}+\nu_{n+1}+K_n}'(x)< \ve^{-\rho(\nu_{n+1}+K_n)}$ on $J_{n+1}$. It thus follows from Lemma \ref{SF_L4} that
$$
\dist(\widetilde{I}_{n+2},\partial J_{n+1})>\ve^{\rho(\nu_{n+1}+K_n)+1}/2\gg \ve^{K_{n+1}^{2/3}}
$$
where $\widetilde{I}_{n+2}=\{x\in J_{n+1}:y_{M_{n+1}+\nu_{n+1}+K_n}(x)\in A\}$. In the last inequality we used (\ref{ind_eq91}).

The last step is to control $y_{M_{n+1}+K_{n+1}}$. Let 
$$
\eta_k(x)=\pi_2(T^k(x+(\nu_{n+1}+K_{n})\omega,\beta)).
$$ 
As we recalled above we have $J_{n+1}+(\nu_{n+1}+K_n)\omega\subset X_n$ and $J_{n+1}+K_{n+1}\omega\subset \T\setminus G_n$, and by (\ref{ind_eq90}) we have 
$N(x;I_{n+1})\gg K_{n+1}$ for all $x\in J_{n+1}+(\nu_{n+1}+K_{n})\omega$. Therefore it follows from Sublemma \ref{ind_sl4} that
$$\eta_{K_{n+1}-\nu_{n+1}-K_n}(x)\notin A' \text{ for all } x\in J_{n+1};$$ and by the definitions of $\widetilde{I}_{n+2}$ above we get that $y_{M_{n+1}+K_{n+1}}(x)\notin A'$ for all 
$x\in J_{n+1}\setminus \widetilde{I}_{n+2}$. Hence 
$$I_{n+2}\subset \widetilde{I}_{n+2}.$$ We also get,
since (\ref{step4_eq4}) holds, $$[\eta_{K_{n+1}-\nu_{n+1}-K_n}(x),y_{M_{n+1}+K_{n+1}}(x)]\subset \T\setminus A' \text{ for } x=s,t.$$
We can therefore apply Lemma \ref{Tf_2}(2) (recall that $y_{M_{n+1}+K_{n+1}}$ overtakes $\eta_{K_{n+1}-\nu_{n+1}-K_n}$ at $x=\xi_1,\xi_2$) and conclude that
$\{x\in J_{n+1}: y_{M_{n+1}+K_{n+1}}(x)\in A'\}$ consists of two intervals, $I_{n+2}^i (i=1,2)$, and $y_{M_{n+1}+K_{n+1}}(I_{n+2}^i)=A'$ ($i=1,2$). 

It remains to 
get a lower bound on the derivative of $y_{M_{n+1}+K_{n+1}}$ on $I_{n+2}$. To obtain this we use Sublemma \ref{ind_sl5} for the passage from
$J_{n+1}+(\nu_{n+1}+K_n)\omega$  to $J_{n+1}+K_{n+1}\omega$. 
We conclude that if $y_{M_{n+1}+K_{n+1}}(x)\in A'$ for some $x\in J_{n+1}+(\nu_{n+1}+K_n)\omega\subset J_{n}+K_{n}\omega\subset X'_n$, then 
$$
\prod_{k=\nu_{n+1}+K_n}^{K_{n+1}-1}f'(y_{M_{n+1}+k}(x))>\ve^{-(1/2+1/2^{n+2})(K_{n+1}-(\nu_{n+1}+K_n))}\gg\kappa\ve^{-K_{n+1}/2} 
$$
whenever the derivatives exist. The last inequality follows from the fact that (\ref{ind_eq91}) holds, i.e., $\nu_{n+1}^2\leq K_{n+1}$, and we have $K_{n+1}^{1/2}/2^{n+2}\gg 1$ (and $K_n\ll K_{n+1}$).
By applying Lemma \ref{SF_L3}(2), recalling the (global) trivial lower bound $y_{M_{n+1}+\nu_{n+1}+K_n}'(x)>1/\kappa$ given by Lemma \ref{SF_L3}(4), we obtain the desired lower bound on
$y_{M_{n+1}+K_{n+1}}'$ on $I_{n+2}$.

\smallskip
Case NR: In this case $J_{n+1}=I_{n+1}$ consist of two intervals, and we only need to use shadowing once (since there is no interactions). We can treat these two intervals
independently, like we did in the proof of Lemma \ref{FS_L2}. We focus on one of them, say $I_{n+1}^1=[s,t]$. The route we shall take is the following:  control
$T^{M_{n+1}-M_n}(\Gamma_{n+1}^1)$, use $(4)_n$ to control $T^{M_{n+1}+K_n}(\Gamma_{n+1}^1)$, and control $T^{M_{n+1}+K_{n+1}}(\Gamma_{n+1}^1)$.

As before we let 
$$
y_k(x)=\pi_2(T^k(x-M_{n+1}\omega,\beta)),
$$
and we need estimates of $y_{M_{n+1}+K_{n+1}}$. By proceeding exactly as above we get the following estimates on $y_{M_{n+1}+K_n}$: there is a unique point $\xi\in I_{n+1}^1=[s,t]$, 
lying in the interior of $I_{n+1}^1$, such that $y_{M_{n+1}+K_n}(\xi)=\beta$, and 
$[\beta,y_{M_{n+1}+K_n}(s)],[\beta,y_{M_{n+1}+K_n}(t)]\subset \T\setminus A$. We let
$$
\eta_k(x)=\pi_2(T^k(x+K_{n}\omega,\beta)).
$$ 
Since $I_{n+1}^1+K_n\omega\subset X_n$ and $N(x; I_{n+1})\gg K_{n+1}-K_n$ for all $x\in I_{n+1}+K_n\omega$ we can apply Sublemma \ref{ind_sl4} to conclude that 
$\eta_{K_{n+1}-K_n}(x)\notin A'$ for all $x\in I_{n+1}^1$. By applying Lemma \ref{Tf_2}(1) we  see that 
$\{x\in I_{n+1}^1: y_{M_{n+1}+K_{n+1}}(x)\in A'\}$ consists of one interval, $I_{n+2}^1$, and $y_{M_{n+1}+K_{n+1}}(I_{n+2}^1)=A'$. 
It also follows that $I_{n+2}^1\subset \{x\in I_{n+1}^1:y_{M_{n+1}+K_n}(x)\in A\}$ which is "well" inside $I_{n+1}^1$, as in the previous case.
Proceeding as in the resonant case above
we obtain the desired lower bound on the derivative of $y_{M_{n+1}+K_{n+1}}$ on $I_{n+2}^1$.

\medskip
This finishes the proof of the inductive lemma.
\end{proof}

\end{section}

\begin{section}{Miscellaneous}
In this section we have collected results of a computational nature. The first three lemmas are related to number theory; the last one
is from ergodic theory.

The first lemma gives a lower bound on the return-time to an interval under translation by a Diophantine number. 
\begin{lemma}\label{M_L1}
Assume that $\omega$ satisfies the Diophantine condition (\ref{DC}) for some $\gamma>0$. If $I\subset \T$ is an interval of length 
$\leq \delta$, then 
$$
I\cap(I+k\omega)=\emptyset
$$
for all $0<|k|\leq \sqrt{\gamma/\delta}$.
\end{lemma}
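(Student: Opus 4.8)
The plan is to argue by contradiction, reducing the geometric statement about overlapping intervals to a statement about how close $k\omega$ can be to an integer, and then invoking the Diophantine condition directly.

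First I would suppose, for contradiction, that $I\cap(I+k\omega)\neq\emptyset$ for some $k$ with $0<|k|\leq\sqrt{\gamma/\delta}$. Pick a point $z$ in this intersection. Then $z\in I$ and also $z-k\omega\in I$ (reading everything mod $1$), so both $z$ and $z-k\omega$ lie in an interval of length $\leq\delta$; hence $d(z,z-k\omega)\leq\delta$. Since $d$ is translation invariant, $d(z,z-k\omega)=d(k\omega,0)=\inf_{p\in\mathbb{Z}}|k\omega-p|$, so we get $\inf_{p\in\mathbb{Z}}|k\omega-p|\leq\delta$.

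On the other hand, $k\in\mathbb{Z}\setminus\{0\}$, so the Diophantine condition (\ref{DC}) gives $\inf_{p\in\mathbb{Z}}|k\omega-p|>\gamma/k^2$. Because $|k|\leq\sqrt{\gamma/\delta}$ we have $k^2\leq\gamma/\delta$, hence $\gamma/k^2\geq\delta$, and therefore $\inf_{p\in\mathbb{Z}}|k\omega-p|>\delta$. This contradicts the inequality from the previous step, which completes the proof.

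There is essentially no obstacle here: the only point requiring a little care is the (standard) identification of $d(k\omega,0)$ on $\T$ with $\inf_{p}|k\omega-p|$ and the translation invariance of $d$, which justifies passing from "two points of $I$ differ by $k\omega$" to "$k\omega$ is within $\delta$ of an integer". Everything else is a direct substitution into (\ref{DC}).
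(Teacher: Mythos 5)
Your proof is correct and follows essentially the same route as the paper: overlap of $I$ and $I+k\omega$ forces $\inf_p|k\omega-p|\leq\delta$, and combining this with the Diophantine bound $\gamma/k^2<\inf_p|k\omega-p|$ yields $|k|>\sqrt{\gamma/\delta}$, giving the contradiction.
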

\begin{proof}If $I\cap(I+k\omega)\neq \emptyset$ we must have $|k\omega-p|\leq \delta$ for some integer $p$. Thus, using (\ref{DC}) we get that either
$\gamma/k^2<\delta$, i.e., $|k|>\sqrt{\gamma/\delta}$, or $k=0$.
\end{proof}

\begin{lemma}\label{M_L2}
Assume that $\omega$ satisfies the Diophantine condition (\ref{DC}) for some $\gamma>0$, and assume that $I^1,I^2\subset \T$ 
are two disjoint intervals of length $\leq \delta$. Let $N=[\sqrt{\gamma/(2\delta)}]$ and assume that there is an integer $\nu$, $0<\nu<N$, such that
$I^1\cap (I^2-\nu\omega)\neq \emptyset$. If we let $J$ be the interval $J=I^1\cup (I^2-\nu\omega)$, then
$$
(J+k\omega)\cap I=\emptyset \text{ for all } k\in[\nu-N,N]\setminus \{0,\nu\},
$$
where $I=I^1\cup I^2$. Moreover, $J\cap I=I^1$ and $(J+\nu\omega)\cap I=I^2$.
\end{lemma}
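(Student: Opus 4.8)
The plan is to reduce everything to the Diophantine estimate (\ref{DC}) via the same mechanism as in Lemma \ref{M_L1}, keeping careful track of the shift $\nu$. First I would record the basic observation: if $\widetilde{I}^a, \widetilde{I}^b \subset \T$ are intervals of length $\le \delta$ and $(\widetilde{I}^a + k\omega)\cap \widetilde{I}^b \ne \emptyset$ for some $k$, then there is an integer $p$ with $|k\omega - p| \le 2\delta$ (the $2\delta$ comes from the fact that both intervals contribute their length; more precisely $|k\omega-p|$ is at most the distance between the centers plus half each length, but since the intervals overlap after translation the relevant bound is $\le \delta + \delta$). By (\ref{DC}), this forces either $k = 0$ or $\gamma/k^2 < 2\delta$, i.e. $|k| > \sqrt{\gamma/(2\delta)} \ge N$. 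So any nonzero ``near-collision'' index between two of our $\delta$-intervals has absolute value $> N$.

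Now $J = I^1 \cup (I^2 - \nu\omega)$ is contained in the $\delta$-interval spanned by $I^1$ and $I^2-\nu\omega$; but since $I^1$ and $I^2-\nu\omega$ overlap and each has length $\le \delta$, $J$ itself has length $\le 2\delta$. To prove $(J+k\omega)\cap I = \emptyset$ for $k \in [\nu-N, N]\setminus\{0,\nu\}$, suppose instead $(J+k\omega)\cap I \ne \emptyset$ for such a $k$. Since $J \subset I^1 \cup (I^2-\nu\omega)$ and $I = I^1 \cup I^2$, this intersection unpacks into four cases: $(I^1+k\omega)\cap I^1$, $(I^1+k\omega)\cap I^2$, $((I^2-\nu\omega)+k\omega)\cap I^1 = (I^2 + (k-\nu)\omega)\cap I^1$, and $(I^2 + (k-\nu)\omega)\cap I^2$. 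In each case I apply the observation above with the appropriate pair of intervals (each of length $\le \delta$), obtaining that the relevant shift — namely $k$, $k$, $k-\nu$, or $k$ respectively — is either $0$ or has absolute value $> N$. Wait: the second and third cases involve the pair $(I^1, I^2)$, and we are told $I^1 \cap (I^2-\nu\omega)\ne\emptyset$, i.e. the shift $\nu$ is itself a near-collision index for that pair; so for that pair the ``forbidden'' shifts are not just $0$ but also possibly $\nu$ and anything within distance $2\delta$ of a multiple — but the Diophantine bound gives that two distinct near-collision indices for the \emph{same} pair differ by more than $N$ (since their difference is a near-collision index for $(I^1,I^1)$ up to the interval lengths). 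So: case $(I^1+k\omega)\cap I^1$ forces $k=0$ or $|k|>N$; case $(I^1+k\omega)\cap I^2$ forces $k=\nu$ or $|k-\nu|>N$ hence $k \notin [\nu-N,N]\cap(\text{admissible range})$... I need to be a little careful here. Let me restructure: the cleanest formulation is that for the pair $(I^1,I^2)$, the set of $m$ with $(I^1+m\omega)\cap I^2\ne\emptyset$ is contained in $\{\nu\} \cup \{m : |m-\nu| > N\}$ — because if $m$ and $\nu$ are both such indices, then $(I^1 + (m-\nu)\omega)\cap(I^1 + \text{something of size} \le 2\delta)$, giving $|m-\nu|>N$ or $m=\nu$; combine with $|m| \le N$-type ranges. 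Working through the four cases with $k$ in the stated range $[\nu-N,N]\setminus\{0,\nu\}$ then yields a contradiction in every case, which proves the first assertion. The main obstacle — and the step I would be most careful about — is getting the interval-length bookkeeping exactly right (the factor $2\delta$ versus $\delta$, and whether $J$ has length $\le 2\delta$ as claimed, since the statement's $N$ uses $2\delta$ precisely to absorb this), and making sure the near-collision index sets are handled uniformly rather than case-by-case in an ad hoc way.

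Finally, for $J \cap I = I^1$: clearly $I^1 \subset J \cap I$. For the reverse, $J \cap I \subset (I^1 \cup (I^2-\nu\omega)) \cap (I^1 \cup I^2)$; the cross terms $(I^2-\nu\omega)\cap I^1 = I^1 \cap (I^2-\nu\omega)$ is nonempty but contained in $I^1$ already (it contributes nothing new since we intersect with $I$ and $I^1 \subset I$) — more carefully, $(I^2-\nu\omega)\cap I^1 \subset I^1$, and $(I^2-\nu\omega)\cap I^2$: this is $(I^2 + 0\cdot\omega)$ shifted by $-\nu$, i.e. $(I^2-\nu\omega)\cap I^2 \ne \emptyset$ would give $|\nu\omega - p|\le 2\delta$ hence (by (\ref{DC}) and $0 < \nu < N$) a contradiction, so this term is empty. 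Hence $J \cap I = I^1$. The identity $(J+\nu\omega)\cap I = I^2$ follows symmetrically: $J + \nu\omega = (I^1+\nu\omega)\cup I^2$, so $(J+\nu\omega)\cap I = ((I^1+\nu\omega)\cup I^2)\cap(I^1\cup I^2)$; the term $I^2 \cap I^2 = I^2$ is there, $(I^1+\nu\omega)\cap I^2 \subset I^2$, and the remaining cross terms $(I^1+\nu\omega)\cap I^1$ and $I^2 \cap I^1$ are both empty — the first because $0 < \nu < N$ and Lemma \ref{M_L1}-type reasoning (with $2\delta$), the second because $I^1, I^2$ are disjoint by hypothesis. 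This completes the proof.
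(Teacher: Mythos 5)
Your argument is correct, but it is organized rather differently from the paper's. The paper treats $J$ as a single interval of length at most $2\delta$ and applies Lemma~\ref{M_L1} to $J$ itself: this yields $(J+k\omega)\cap J=\emptyset$ for all $0<|k|\le N$, and, replacing $k$ by $k-\nu$, also $(J+k\omega)\cap(J+\nu\omega)=\emptyset$ for all $0<|k-\nu|\le N$. Since $I^1\subset J$ and $I^2\subset J+\nu\omega$, and since $k\in[\nu-N,N]\setminus\{0,\nu\}$ forces both $|k|\le N$, $k\ne 0$ and $|k-\nu|\le N$, $k\ne\nu$, the first assertion drops out with no case-splitting at all. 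The ``moreover'' part is then done exactly as you do it, by applying Lemma~\ref{M_L1} to each $I^i$ (of length $\le\delta$) and noting $0<\nu<N<\sqrt{\gamma/\delta}$. Your four-fold decomposition into the interval pairs $(I^1,I^1)$, $(I^1,I^2)$, $(I^2,I^1)$, $(I^2,I^2)$ and the ensuing comparison of ``near-collision indices'' is sound and uses the same underlying $2\delta$-versus-Diophantine mechanism, but the paper's trick of working with $J$ as one fat interval absorbs all four cases at once and is tighter to write.

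One caution about the way you set it up: the ``observation'' you state at the outset --- that $(\widetilde{I}^a+k\omega)\cap\widetilde{I}^b\ne\emptyset$ with $|\widetilde{I}^a|,|\widetilde{I}^b|\le\delta$ forces $|k\omega-p|\le 2\delta$ for some integer $p$ --- is false as stated when $\widetilde{I}^a\ne\widetilde{I}^b$: nothing prevents $\widetilde{I}^b$ from sitting, say, at distance $1/2$ from $\widetilde{I}^a$, in which case $k\omega$ is close to $1/2$ rather than to an integer. You correctly notice the problem in the ``Wait'' paragraph and replace it with the right statement, namely that if $m$ and $\nu$ are \emph{both} near-collision indices for the pair $(I^1,I^2)$ then $(m-\nu)\omega$ is within $2\delta$ of an integer, whence $m=\nu$ or $|m-\nu|>N$. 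In a clean writeup you should lead with that corrected formulation (or, better, adopt the paper's route and avoid the issue altogether), since the initial observation as you phrase it does not survive scrutiny even though your actual applications of it are the special cases where it happens to be true.
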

\begin{proof}Since $|J|\leq 2\delta$, it follows from Lemma \ref{M_L1}, and the choice of $N$, that $(J+k\omega)\cap J=\emptyset$ for all $0<|k|\leq N$.
Moreover,  $(J+k\omega)\cap (J+\nu\omega)\neq \emptyset$ iff $(J+(k-\nu)\omega)\cap J=\emptyset$. Thus, by the same argument we have
$(J+k\omega)\cap (J+\nu\omega)= \emptyset$ for all $0<|k-\nu|\leq N$. From this is now follows
(since $I^1\subset J$ and $I^2\subset J+\nu\omega$) that $(J+k\omega)\cap I=\emptyset$ for all
$k\in[\nu-N,N]\setminus \{0,\nu\}$ (recall that $0<\nu<N$). This shows the first part of the lemma-


To finish we first note that (again by Lemma \ref{M_L1}) $(I^i+k\omega)\cap I^i=\emptyset$ for all $0<|k|\leq N$ ($i=1,2$).
Thus, we must have $J\cap I=(I_1\cup (I_2-\nu\omega))\cap (I_1\cup I_2)=I_1$ and $(J+\nu\omega)\cap I=((I_1+\nu\omega)\cup I_2)\cap (I_1\cup I_2)=I_2$. 
\end{proof}

The following lemma will be used in the inductive step in the proof of Proposition \ref{pt_prop} to show that there is always "space"
for a small set consisting of tiny intervals to be translated into a "good" position.

\begin{lemma}\label{M_L3}
Assume that each $I_j\subset \T$ ($j=0,1,\ldots,n$) consists of at most $p\geq 1$ intervals. 
Assume further that there are integers $N_0<N_1<\cdots <N_n$ such that for each interval $I_j^\ell$ in $I_j$ ($j\in [0,n]$) we have
\begin{equation}\label{misc_L1}
3I_j^\ell\cap\bigcup_{m=1}^{N_j}(3I_j^\ell+m\omega)=\emptyset.
\end{equation}
If the set $J\subset \T$ consists of at most $p$ intervals, $\max_i|J^i|< \min_{j,\ell}|I_j^\ell|$, 
and if the integers $Z_j>0$ are chosen so that $10p^2Z_j/N_j<3^{-(j+1)}$, then there is an integer $k\in [1,N_n]$ (in fact at least $N_n/2$ of them) such that
\begin{equation}\label{misc_L2}
(J+k\omega)\subset \T\setminus \bigcup_{j=0}^n\bigcup_{m=-Z_j}^{Z_j}(I_j+m\omega).
\end{equation}
\end{lemma}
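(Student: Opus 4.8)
The plan is a direct counting argument. I will call an integer $k\in[1,N_n]$ \emph{bad} if $(J+k\omega)$ meets $I_j+m\omega$ for some $j\in[0,n]$ and some $|m|\le Z_j$, and \emph{good} otherwise; the goal is to show that there are fewer than $N_n/2$ bad integers, whence at least $N_n/2$ good ones, and every good $k$ satisfies \eqref{misc_L2}.

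First I would isolate the geometric mechanism. Fix $j\in[0,n]$, an interval $I_j^\ell$ of $I_j$, and an interval $J^i$ of $J$, and set $\mathcal K=\mathcal K(j,\ell,i)=\{k'\in\Z:(J^i+k'\omega)\cap I_j^\ell\ne\emptyset\}$. Since $|J^i|<|I_j^\ell|$, any translate of $J^i$ that meets $I_j^\ell$ lies inside $3I_j^\ell$; hence $k'\in\mathcal K$ implies $J^i+k'\omega\subset 3I_j^\ell$. Consequently, if $k_1',k_2'\in\mathcal K$ are distinct then $J^i+k_1'\omega\subset 3I_j^\ell\cap\bigl(3I_j^\ell+(k_1'-k_2')\omega\bigr)$, so this intersection is nonempty, and applying \eqref{misc_L1} to $\pm(k_1'-k_2')$ forces $|k_1'-k_2'|>N_j$. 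Therefore, within any interval of length $L$, the set $\mathcal K$ has at most $L/N_j+1$ elements. This is the one slightly delicate point, and it is exactly where the tripling in \eqref{misc_L1} is used: it lets one deduce $N_j$-separation of the return times of the \emph{shorter} intervals $J^i$ even though the hypothesis is only posed for the $I_j^\ell$ themselves.

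Next I would convert this into a count of bad $k$. Note $(J^i+k\omega)\cap(I_j^\ell+m\omega)\ne\emptyset$ iff $k-m\in\mathcal K$, and for $k\in[1,N_n]$, $|m|\le Z_j$ we have $k-m\in[1-Z_j,N_n+Z_j]$, an interval of length $<N_n+2Z_j$. So $\mathcal K$ contributes at most $(N_n+2Z_j)/N_j+1$ relevant values $k'$, each accounting for at most $2Z_j+1$ bad $k$. Using $1\le Z_j<N_j/(10p^2)\le N_n$ one bounds the product of these factors by $9Z_jN_n/N_j$. Summing over the at most $p$ choices of $\ell$, the at most $p$ choices of $i$, and over $j=0,\dots,n$, and inserting $Z_j/N_j<3^{-(j+1)}/(10p^2)$, the total number of bad $k$ is at most
$$\sum_{j=0}^n p^2\cdot\frac{9Z_jN_n}{N_j}<\frac{3N_n}{10}\sum_{j=0}^n 3^{-j}<\frac{9N_n}{20}<\frac{N_n}{2}.$$
Hence at least $N_n/2$ integers $k\in[1,N_n]$ are good, and for each such $k$ the translate $J+k\omega$ avoids $\bigcup_{j=0}^n\bigcup_{m=-Z_j}^{Z_j}(I_j+m\omega)$, which is \eqref{misc_L2}. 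Apart from the geometric step above, everything is bookkeeping; the constant $10p^2$ in the hypothesis is chosen generously so that the geometric series closes comfortably below $1/2$.
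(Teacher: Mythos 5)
Your argument is correct and is essentially the paper's proof: the same counting of bad $k$ via the observation that $|J^i|<|I_j^\ell|$ forces $J^i+k'\omega\subset 3I_j^\ell$, so that the hypothesis on $3I_j^\ell$ gives $N_j$-separation of the return times, followed by the same summation over the $p^2$ pairs and over $j$ using the geometric series. You are slightly more explicit than the paper about the boundary padding by $\pm Z_j$, but the bookkeeping and the final bound match.
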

\begin{proof}
We first fix $j\in [0,n]$ and pick one interval $J^i$ from $J$, and one from $I_j$, denoted $I_j^\ell$. 
From condition (\ref{misc_L1}), and the fact that $|J^i|< |I_j^\ell|$ (so if $(J^i+k\omega)\cap I_j^\ell\neq \emptyset$, then $(J^i+k\omega)\subset 3I_j^\ell$), 
we deduce that in the interval $[1,N_j]$ there are at most $2Z_j+1$ integers $k$ such that 
$$
(J^i+k\omega)\cap\bigcup_{m=-Z_j}^{Z_j}(I^\ell_j+m\omega) \neq \emptyset.
$$
Therefore, in $[1,N_n]$ there are at most $([N_n/N_j]+1)(2Z_j+1)< N_n(10Z_j/N_j)$ integers $k$ such that the above condition holds. 
Since $J$ and $I_j$ each consists of at most $p$ intervals, there are at most $p^2N_n(10Z_j/N_j)$ integers $k\in[1,N_n]$ such that
$$
(J+k\omega)\cap\bigcup_{m=-Z_j}^{Z_j}(I_j+m\omega) \neq \emptyset.
$$
Consequently, 
using the estimates on $10Z_j/N_j$ we conclude that there are at most
$$
p^2N_n\left( 10Z_0/N_0+10Z_1/N_1+\ldots+10Z_n/N_n \right)<N_n\sum_{j=0}^n\frac{1}{3^{j+1}}<N_n/2
$$ 
integers $k$ in $[1,N_n]$ for which (\ref{misc_L2}) is violated. Thus, the statement  of the lemma holds.
\end{proof}

The next lemma is an elementary topological fact (which we use in the proof of Proportion \ref{pt_prop}).
\begin{lemma}\label{M_L4}
Assume that $\emptyset\neq J_n\subset S_n\subset \T$ ($n\geq 0$) are closed sets such that 
 $S_{n+1}\subset S_n$ for all $n\geq 0$. 
If $x^*\in \T$ is such that $\dist(x^*,J_n)\to 0$ as $n\to\infty$, then $x^*\in \cap_{n\geq 0} S_n$.
\end{lemma}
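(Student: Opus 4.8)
The plan is to reduce the claim to the single observation that a point at zero distance from a closed set belongs to that set, exploiting the nesting $S_{n+1}\subset S_n$ to transfer information from the sets $J_n$ (which carry the hypothesis) to a fixed $S_m$.

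Fix an arbitrary $m\geq 0$; the goal is to show $x^*\in S_m$. For every $n\geq m$ the nesting gives $S_n\subset S_m$, and by assumption $J_n\subset S_n$, hence $J_n\subset S_m$. Consequently
$$
\dist(x^*,S_m)\leq \dist(x^*,J_n)\qquad\text{for all } n\geq m.
$$
Letting $n\to\infty$ and using the hypothesis $\dist(x^*,J_n)\to 0$, we conclude $\dist(x^*,S_m)=0$. Since $S_m$ is closed (and nonempty, as $J_m\neq\emptyset$), this forces $x^*\in S_m$.

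As $m\geq 0$ was arbitrary, $x^*\in\bigcap_{n\geq 0}S_n$, which is the assertion. There is no real obstacle here: the only point to be slightly careful about is that the distance bound must be read for indices $n\geq m$ (so that $S_n\subset S_m$ is available), and that closedness of $S_m$ is what upgrades "distance zero" to "membership".
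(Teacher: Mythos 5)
Your proof is correct and uses the same ingredients as the paper's: the nesting $J_n\subset S_n\subset S_m$ for $n\geq m$, and closedness of $S_m$ to upgrade zero distance to membership. The paper phrases it as a proof by contradiction (if $x^*\notin S_m$, then $\dist(x^*,S_m)>\delta>0$ forces $\dist(x^*,J_n)>\delta$ for $n\geq m$, contradicting the hypothesis), whereas you argue directly; these are contrapositives of the same reasoning.
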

\begin{proof}We prove this by contradiction. Assume that $x^*\notin  \cap_{n\geq 0} S_n$. Then there is an $m\geq 0$
such that $x^*\notin S_m$, and thus $x^*\notin S_n$ for all $n\geq m$. Since $S_m$ is closed we must have $\dist(x^*,S_m)>\delta$ for some
$\delta>0$. But this implies $\dist(x^*,S_n)>\delta$ for all $n\geq m$, and hence $\dist(x^*,J_n)>\delta$ for all $n\geq m$. 
\end{proof}

The last proposition is  due to Herman \cite{H1,H2} and Johnson \cite{Jo}. However, the formulation is slightly different, so for completeness we include a proof. 
\begin{prop}\label{M_P}
Assume that $\omega\in \T$ is irrational and that the map $F:\T^2\to\T^2$ 
is of the form $F(x,y)=(x+\omega,h(x,y))$, where $h:\T^2\to\T$ is continuous. Assume further that there are two
measurable functions $w^\pm:\T\to\T$ such that $w^+(x)\neq w^-(x)$ for a.e. $x\in\T$, 
\begin{equation}\label{M_P_eq1}
F(x,w^\pm(x))=(x+\omega,w^\pm(x+\omega)) \text{ a.e. } x\in\T,
\end{equation}
and such that for a.e. $x\in \T$ there holds
\begin{equation}\label{M_P_eq2}
d(y_n,w^+(x_n))\to 0 \text{ as } n\to\infty \text{ for all } y\neq w^-(x).
\end{equation}
Then $F$ has exactly two invariant ergodic Borel probability measures $\mu^{\pm}$; and $\mu^\pm$
is the push-forward of the Lebesgue measure on $\T$ by the map $x\mapsto (x,w^\pm(x))$.
Moreover, $F$ has either only one or only two minimal sets; either $M=\text{supp }\mu^+=\text{supp }\mu^-$, or $M^\pm=\text{supp }\mu^\pm$.    
\end{prop}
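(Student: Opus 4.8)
The plan is to prove Proposition \ref{M_P} by first pushing forward Lebesgue measure along the two graphs and checking these are indeed $F$-invariant, and then showing that \emph{any} invariant ergodic measure must be one of these two. For the first part, define $\mu^\pm$ to be the push-forward of Lebesgue measure on $\T$ under the (measurable) maps $\iota^\pm(x)=(x,w^\pm(x))$. Invariance of $\mu^\pm$ follows directly from (\ref{M_P_eq1}) together with the fact that $x\mapsto x+\omega$ preserves Lebesgue measure: for a Borel set $E\subset\T^2$ we have $\mu^\pm(F^{-1}E)=|\{x:F(x,w^\pm(x))\in E\}|=|\{x:(x+\omega,w^\pm(x+\omega))\in E\}|=|\{x:(x,w^\pm(x))\in E\}|=\mu^\pm(E)$, using (\ref{M_P_eq1}) on the full-measure set where it holds. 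Ergodicity of $\mu^\pm$ reduces to ergodicity of the irrational rotation: the map $\iota^\pm$ is a measure-theoretic isomorphism between $(\T,\text{Leb},x\mapsto x+\omega)$ and $(\T^2,\mu^\pm,F)$ with inverse $\pi_1$ (this works mod $0$ since the graphs are exactly $\pi_1$-sections), and an irrational rotation is ergodic. Finally $\mu^+\neq\mu^-$ because $w^+(x)\neq w^-(x)$ a.e.

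For the harder direction, let $\nu$ be any $F$-invariant ergodic Borel probability measure. The key idea is to exploit the attraction property (\ref{M_P_eq2}). First, by ergodicity of the rotation and invariance of $\nu$, the first marginal $(\pi_1)_*\nu$ is a rotation-invariant probability on $\T$, hence equals Lebesgue measure; in particular $(\pi_1)_*\nu$ has no atoms, so there is a Borel set $\Omega\subset\T$ of full Lebesgue measure on which (\ref{M_P_eq2}) holds and such that $\nu$-a.e.\ point projects into $\Omega$. Disintegrate $\nu=\int_\T \nu_x\, dx$ over the first coordinate. The plan is to show that for a.e.\ $x$ the conditional $\nu_x$ is supported on $\{w^+(x)\}$ unless it is the atom at $w^-(x)$; more precisely, I will argue that $\nu$-a.e.\ point $(x,y)$ satisfies either $y=w^-(x)$ or the forward orbit of $(x,y)$ converges to the graph of $w^+$. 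Consider the set $D=\{(x,y): x\in\Omega,\ y\neq w^-(x)\}$; on $D$ we have $d(y_n,w^+(x_n))\to 0$. Using Poincar\'e recurrence (or the Birkhoff ergodic theorem applied to a suitable test function) together with the fact that along a $\nu$-generic orbit the distance $d(y_n,w^+(x_n))$ must both tend to $0$ and be recurrent, one concludes $d(y,w^+(x))=0$ $\nu$-a.e.\ on $D$; hence $\nu$ restricted to $D$ is carried by the graph of $w^+$. The complement, $\nu$ restricted to the graph of $w^-$, is carried by that graph. Thus $\nu$ is a combination of a measure on $\text{graph}(w^+)$ and a measure on $\text{graph}(w^-)$, each of which (being $F$-invariant with Lebesgue first marginal, and the graphs being $\pi_1$-sections) must be a multiple of $\mu^+$ resp.\ $\mu^-$; ergodicity forces $\nu\in\{\mu^+,\mu^-\}$.

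For the statement about minimal sets: every minimal set carries at least one invariant measure, so each minimal set is $\text{supp}\,\mu^+$ or $\text{supp}\,\mu^-$ (a minimal set equals the support of any invariant measure it carries, by minimality, since the support is closed, nonempty and invariant). Conversely $\text{supp}\,\mu^\pm$ need not itself be minimal, but it contains a minimal set; if $\text{supp}\,\mu^+\neq\text{supp}\,\mu^-$ then by minimality each must \emph{be} the unique minimal set it contains — here one uses that a minimal subset of $\text{supp}\,\mu^+$ supports an invariant measure which by the classification is $\mu^+$ or $\mu^-$, and a $\mu^-$-supported set cannot sit inside $\text{supp}\,\mu^+$ if the two supports are distinct, because... actually the cleanest route is: let $M$ be a minimal set; $M$ carries an ergodic invariant measure $\mu\in\{\mu^+,\mu^-\}$, and since $M$ is minimal and $\text{supp}\,\mu$ is a nonempty closed invariant subset of $M$, we get $M=\text{supp}\,\mu$. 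So the only candidates for minimal sets are $\text{supp}\,\mu^+$ and $\text{supp}\,\mu^-$; if these coincide there is exactly one minimal set $M=\text{supp}\,\mu^+=\text{supp}\,\mu^-$, and if they differ there are exactly two, $M^\pm=\text{supp}\,\mu^\pm$. This gives the dichotomy in the statement.

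The main obstacle is the step asserting that $\nu$-a.e.\ point off the graph of $w^-$ actually lies \emph{on} the graph of $w^+$. Convergence (\ref{M_P_eq2}) gives $d(y_n,w^+(x_n))\to 0$, but $w^+$ is only measurable, so one cannot naively say "the orbit limits onto a compact invariant graph." The fix is to work with the invariant measure directly: the function $(x,y)\mapsto d(y,w^+(x))$ is $\nu$-measurable, and along a $\nu$-generic orbit its values form a sequence that converges to $0$ (by (\ref{M_P_eq2}), valid a.e.) while simultaneously, by Poincar\'e recurrence for the $F$-invariant $\nu$, returning infinitely often to any fixed neighborhood of the generic point's value; this forces the generic value to be $0$. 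Making "$\nu$-generic" precise and checking the measurability and recurrence statements carefully — in particular that (\ref{M_P_eq2}) holds on a set of full $\nu$-measure, which follows because its exceptional set has first-coordinate projection of zero Lebesgue measure hence zero $(\pi_1)_*\nu=\text{Leb}$ measure, and the full exceptional set is $\pi_1^{-1}$ of that — is the part requiring the most care.
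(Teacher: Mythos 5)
Your proof is correct and reaches the same conclusions, but the central step — showing any ergodic invariant $\nu$ must be $\mu^+$ or $\mu^-$ — takes a genuinely different route from the paper. The paper fixes $\mu\neq\mu^-$, uses mutual singularity of distinct ergodic measures to pick a full-$\mu$-measure set $A$ with $\mu^-(A)=0$, notes the first marginal of $\mu$ is Lebesgue so $(x,w^-(x))\notin A$ for a.e.\ $x$, and then applies Birkhoff to a continuous test function $\vf$: the attraction hypothesis makes the Birkhoff averages of $\vf$ along the orbit of any $(x,y)\in A$ coincide with those along the orbit of $(x,w^+(x))$, which equal $\int\vf\,d\mu^+$, so $\mu=\mu^+$. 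You instead introduce the $\nu$-measurable function $\phi(x,y)=d(y,w^+(x))$, argue via Poincar\'e recurrence that $\phi=0$ $\nu$-a.e.\ on the set where $y\neq w^-(x)$ (if $\nu\{\phi>\delta\}\cap\{y\neq w^-(x)\}>0$, a.e.\ such point returns to the set infinitely often, contradicting $\phi(F^n(x,y))\to 0$), and conclude $\nu$ is concentrated on the union of the two graphs; ergodicity plus a.e.-invariance of each graph then forces $\nu$ to be carried by exactly one of them and hence equal $\mu^\pm$ by disintegration. Both arguments are valid; the paper's is slicker in that it never shows the measure is literally supported on a graph (it just shows equality of integrals), while yours is more geometric and yields the support statement directly, at the cost of a bit more measure-theoretic bookkeeping which, as you note, must be checked carefully (measurability of $\phi$, that the exceptional set for (\ref{M_P_eq2}) is $\nu$-null, and that the graphs are invariant mod $\nu$). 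Your treatment of the minimal-set dichotomy — every minimal set carries an ergodic invariant measure whose support must equal that minimal set, so the only candidates are $\text{supp}\,\mu^\pm$ — is essentially identical to the paper's, including its slightly terse passage from "the only candidate minimal sets are the two supports, each containing a minimal set" to the stated dichotomy.
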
 
\begin{proof}
Let $\mu^\pm$ be the push-forward of the Lebesgue measure on $\T$ (which is unique probability measure on $\T$ invariant under translation $x\mapsto x+\omega$) 
by the maps $x\mapsto (x,w^\pm(x))$. From (\ref{M_P_eq1}) it easily follows that $\mu^\pm$ are $F$-invariant ergodic Borel probability measures on $\T^2$.

We shall now show that these are the only such measures. First we note that by Birkhoff's ergodic theorem the following holds for all $\vf\in C^0(\T^2)$:
\begin{equation}\label{M_P_eq3}
\lim_{n\to\infty}\frac{1}{n}\sum_{k=0}^{n-1}\varphi(F^k(x,w^\pm(x)))=\int_{\T^2}\varphi d\mu^\pm, \text{ for a.e. }  x\in\T.
\end{equation}
Using (\ref{M_P_eq2}) we note that for a.e. $x\in \T$ and all $y\neq w^-(x)$ we have
\begin{equation}\label{M_P_eq4}
\lim_{n\to\infty}\frac{1}{n}\left(\sum_{k=0}^{n-1}\varphi(F^k(x,w^+(x)))-\sum_{k=0}^{n-1}\varphi(F^k(x,y))\right)=0
\end{equation}
for all $\vf\in C^0(\T^2)$. Thus, for a.e $x\in \T$ we have 
\begin{equation}\label{M_P_eq5}
\sum_{k=0}^{n-1}\varphi(F^k(x,y))=\begin{cases}\int_{\T^2}\varphi d\mu^+, &\text{ if } y\neq w^-(x) \\ \int_{\T^2}\varphi d\mu^-, 
&\text{ if } y= w^-(x). \end{cases}
\end{equation}

Assume that $\mu$ is an $F$-invariant ergodic Borel probability measure. Assume also that $\mu\neq \mu^-$. We shall show that then $\mu=\mu^+$.
Since $\mu$ and $\mu^-$ are both ergodic they must
be mutually singular. Thus we can find a set $A\subset \T^2$ of full $\mu$-measure such that $\mu^-(A)=0$. 
The projection of $\mu$ onto the base,
i.e., $\lambda(I)=\mu(\pi_1^{-1}(I))$, is a probability measure on $\T$ which is invariant under $x\mapsto x+\omega$. Hence it must be the Lebesgue measure.  
It thus follows that $|\pi_1(A)|=1$. Since $\mu^-(A)=0$ we must have $(x,w^-(x))\notin A$ for a.e. $x\in \T$.
It therefor follows from Birkhoff's ergodic theorem and (\ref{M_P_eq5}) that we for all $\vf\in C^0(\T^2)$ have
$$
\int_{\T^2}\varphi d\mu=\lim_{n\to\infty}\frac{1}{n}\sum_{k=0}^{n-1}\varphi(F^k(x,y))=\int_{\T^2}\varphi d\mu^+ \text{ for $\mu$-a.e } (x,y)\in A. 
$$ 
We thus conclude that $\int_{\T^2}\vf d\mu=\int_{\T^2}\vf d\mu^+$ for all $\vf\in C^0(\T^2)$. Therefore, by Riesz representation theorem, we have $\mu=\mu^+$. 

We now turn to the minimal sets. Firstly, since $F$ is continuous and $\T^2$ is compact, we know that $F$ has a minimal set $M$ (in particular we have $F(M)=M$). 
Consider the restriction $F|_M:M\to M$. Since $M$ is compact, this map has an $F|_M$-invariant ergodic Borel probability measure $\widetilde{\mu}$ on $M$.
Extending this measure to $\T^2$ (by letting $\mu(A)=\widetilde{\mu}(A\cap M)$) gives us an $F$-invariant ergodic Borel probability measure $\mu$.
Hence we must have $\mu=\mu^+$ or $\mu=\mu^-$. Since the measure $\mu$ is supported on $M$, and since the support is a closed and $F$-invariant set,
and since $M$ is minimal, we must have $\text{supp } \mu=M$.

From this we conclude that $F$ can have at most two minimal sets, since each minimal set supports an $F$-invariant ergodic Borel probability measure.
We also note that the support of an $F$-invariant measure is closed and $F$-invariant; hence it contains a minimal set.
Thus, either $F$ has a unique minimal set $M$, and $M=\text{supp }\mu^+=\text{supp }\mu^-$, or $F$ has two minimal sets, $M^\pm$, and 
$M^\pm=\text{supp }\mu^\pm$. 

\end{proof}

\end{section}

\end{document}